\documentclass[11pt]{article}

\AtEndDocument{\bigskip{\footnotesize%
\textsc{Inria Saclay-Ile-de-France Alan Turing Bldg,1 rue Honoré d'Estienne d'Orves, 91120 Palaiseau, France.}
  \par \textit{E-mail address:} \texttt{nicolas.berkouk@inria.fr} \par   
  \noindent \textsc{Universit\'e de Paris, CRESS, INSERM, INRA, F-75004 Paris France.} \par
  \textit{E-mail address:} \texttt{francois.petit@u-paris.fr} 
  }}

\setlength{\textwidth}{125mm}
\setlength{\textheight}{195mm}

\usepackage{amsmath,amssymb,amsthm,mathrsfs}
\usepackage[all,2cell]{xy}
\usepackage{graphicx}
\usepackage{enumerate} 
\usepackage{fixme}
\usepackage{lmodern}
\usepackage{dsfont}
\usepackage{color}
\usepackage{hyperref}
\usepackage[margin=3cm]{geometry}

\DeclareMathOperator{\Fun}{Fun}
\DeclareMathOperator{\colim}{colim}

\DeclareMathOperator{\Int}{Int}

\DeclareMathOperator{\RHom}{RHom}

\DeclareMathOperator{\Hom}{Hom}

\DeclareMathOperator{\id}{id}
\DeclareMathOperator{\opp}{op}
\DeclareMathOperator{\Mod}{Mod}
\DeclareMathOperator{\Ch}{Ch}

\DeclareMathOperator{\Hn}{H}
\DeclareMathOperator{\Mor}{Mor}

\DeclareMathOperator{\Supp}{Supp}

\DeclareMathOperator{\Ker}{Ker}

\DeclareMathOperator{\Op}{Op}
\DeclareMathOperator{\Inter}{Inter}

\begin{document}

\theoremstyle{plain} 
\newtheorem{theorem}{Theorem}[section]
\newtheorem{corollary}[theorem]{Corollary}
\newtheorem{proposition}[theorem]{Proposition}
\newtheorem{lemma}[theorem]{Lemma}
\theoremstyle{definition} 
\newtheorem{definition}[theorem]{Definition}
\newtheorem{example}[theorem]{Example}
\newtheorem{examples}[theorem]{Examples}
\newtheorem{remark}[theorem]{Remark}
\newtheorem{notation}[theorem]{Notations}
\newtheorem{fact}[theorem]{Fact}

\numberwithin{equation}{section}

\newcommand{\pinv}{p_\ast^{\C^\times}\hspace{-0.2em}}
\newcommand{\On}[1]{\mathcal{O}_{#1}}
\newcommand{\En}[1]{\mathcal{E}_{#1}}
\newcommand{\Fn}[1]{\mathcal{F}_{#1}} 
\newcommand{\tFn}[1]{\mathcal{\tilde{F}}_{#1}}
\newcommand{\hum}[1]{hom_{\mathcal{A}}({#1})}
\newcommand{\hcl}[2]{#1_0 \lbrack #1_1|#1_2|\ldots|#1_{#2} \rbrack}
\newcommand{\hclp}[3]{#1_0 \lbrack #1_1|#1_2|\ldots|#3|\ldots|#1_{#2} \rbrack}
\newcommand{\partiel}[2]{\dfrac{\partial #1}{\partial #2}}
\newcommand{\catMod}{\mathsf{Mod}}
\newcommand{\Der}{\mathrm{D}}
\newcommand{\Ds}{D_{\mathbb{C}}}
\newcommand{\DG}{\mathsf{D}^{b}_{dg,\mathbb{R}-\mathsf{C}}(\mathbb{C}_X)}
\newcommand{\lI}{[\mspace{-1.5 mu} [}
\newcommand{\rI}{] \mspace{-1.5 mu} ]}
\newcommand{\Ku}[2]{\mathfrak{K}_{#1,#2}}
\newcommand{\iKu}[2]{\mathfrak{K^{-1}}_{#1,#2}}
\newcommand{\Be}{B^{e}}
\newcommand{\op}[1]{#1^{\opp}}
\newcommand{\Tt}{\mathcal{T}}
\newcommand{\N}{\mathbb{N}}
\newcommand{\Ab}[1]{#1/\lbrack #1 , #1 \rbrack}
\newcommand{\Du}{\mathbb{D}}
\newcommand{\C}{\mathbb{C}}
\newcommand{\R}{\mathbb{R}}
\newcommand{\Z}{\mathbb{Z}}
\newcommand{\V}{\mathbb{V}}
\newcommand{\W}{\mathbb{W}}
\newcommand{\al}{\mathfrak{a}}
\newcommand{\w}{\omega}
\newcommand{\K}{\mathcal{K}}
\newcommand{\Hoc}{\mathcal{H}\mathcal{H}}
\newcommand{\env}[1]{{\vphantom{#1}}^{e}{#1}}
\newcommand{\eA}{{}^eA}
\newcommand{\eB}{{}^eB}
\newcommand{\eC}{{}^eC}
\newcommand{\cA}{\mathcal{A}} 
\newcommand{\cB}{\mathcal{B}}
\newcommand{\cD}{\mathcal{D}}
\newcommand{\cR}{\mathcal{R}}
\newcommand{\cI}{\mathcal{I}}
\newcommand{\cL}{\mathcal{L}}
\newcommand{\cO}{\mathcal{O}}
\newcommand{\cM}{\mathcal{M}}
\newcommand{\cN}{\mathcal{N}}
\newcommand{\cK}{\mathcal{K}}
\newcommand{\cC}{\mathcal{C}}
\newcommand{\cF}{\mathcal{F}}
\newcommand{\cG}{\mathcal{G}}
\newcommand{\cP}{\mathcal{P}}
\newcommand{\cQ}{\mathcal{Q}}
\newcommand{\cU}{\mathcal{U}}
\newcommand{\cE}{\mathcal{E}}
\newcommand{\cS}{\mathcal{S}}
\newcommand{\cT}{\mathcal{T}}
\newcommand{\D}{\text{D}}
\newcommand{\Obj}{\text{Obj}}
\newcommand{\chE}{\widehat{\mathcal{E}}}
\newcommand{\cW}{\mathcal{W}}
\newcommand{\chW}{\widehat{\mathcal{W}}}
\newcommand{\Hper}{\Hn^0_{\textrm{per}}}
\newcommand{\Dper}{\Der_{\mathrm{perf}}}
\newcommand{\Yo}{\textrm{Y}}
\newcommand{\gqcoh}{\mathrm{gqcoh}}
\newcommand{\coh}{\mathrm{coh}}
\newcommand{\cc}{\mathrm{cc}}
\newcommand{\qcc}{\mathrm{qcc}}
\newcommand{\gd}{\mathrm{gd}}
\newcommand{\qcoh}{\mathrm{qcoh}}
\newcommand{\lcl}{\mathrm{lcl}}
\newcommand{\fin}{\mathrm{fin}}
\newcommand{\obplus}[1][i \in I]{\underset{#1}{\overline{\bigoplus}}}
\newcommand{\Lte}{\mathop{\otimes}\limits^{\rm L}}
\newcommand{\te}{\mathop{\otimes}\limits^{}}
\newcommand{\btimes}{\mathop{\boxtimes}\limits^{}}
\newcommand{\pt}{\textnormal{pt}}
\newcommand{\A}[1][X]{\cA_{{#1}}}
\newcommand{\dA}[1][X]{\cC_{X_{#1}}}
\newcommand{\conv}[1][]{\mathop{\circ}\limits_{#1}}
\newcommand{\sconv}[1][]{\mathop{\ast}\limits_{#1}}
\newcommand{\ldetens}{\overset{\mathnormal{L}}{\underline{\boxtimes}}}
\newcommand{\br}{\bigr)}
\newcommand{\bl}{\bigl(}
\newcommand{\sC}{\mathscr{C}}
\newcommand{\ucat}{\mathbf{1}}
\newcommand{\ubtimes}{\underline{\boxtimes}}
\newcommand{\uLte}{\mathop{\underline{\otimes}}\limits^{\rm L}} 
\newcommand{\Lp}{\mathrm{L}p}
\newcommand{\pder}[3][]{\frac{\partial^{#1}#2}{\partial{#3}}}
\newcommand{\reg}{\mathrm{reg}}
\newcommand{\sing}{\mathrm{sing}}
\newcommand{\fExt}{\mathcal{E}xt}
\newcommand{\fTor}{\mathcal{T}or}
\newcommand{\fEnd}{\mathcal{E}nd}
\newcommand{\dL}{\mathrm{L}}
\newcommand{\fgd}{\mathrm{fgd}}
\newcommand{\Zl}{Z}
\newcommand{\subtageq}[1]{\stepcounter{equation} \tag*{$(\arabic{section}.\arabic{equation})_#1$}}
\newcommand{\wtmu}{\widetilde{\mu}}
\newcommand{\lexp}{\,^{l}\!}


\newcommand{\vvert}{\Vert}
\newcommand{\tw}[1]{\widetilde{#1}}
\newcommand{\Derb}{\mathrm{D}^{\mathrm{b}}}
\newcommand{\cor}{{\bf k}}


\newcommand{\oim}[1]{{#1}_*}
\newcommand{\eim}[1]{{#1}_!}
\newcommand{\roim}[1]{\mathrm{R}{#1}_*}
\newcommand{\reim}[1]{\mathrm{R}{#1}_!}
\newcommand{\opb}[1]{#1^{-1}}
\newcommand{\epb}[1]{#1^{!}}
\newcommand{\spb}[1]{#1^{*}}
\newcommand{\lspb}[1]{\LL{#1}^{*}}
\newcommand{\deim}[1]{{#1}_{D}}
\newcommand{\dopb}[1]{#1^{D}}
\newcommand{\popb}[1]{#1^{\dag}}

\newcommand{\tens}[1][]{\mathbin{\otimes_{\raise1.5ex\hbox to-.1em{}{#1}}}}
\newcommand{\etens}{\mathbin{\boxtimes}}
\newcommand{\cl}{\colon}
\newcommand{\Eph}{\textnormal{Eph}}

\title{Ephemeral persistence modules and distance comparison}
\author{Nicolas Berkouk, Fran\c{c}ois Petit\footnote{The author has been  supported in the frame of the OPEN scheme of the Fonds National de la Recherche (FNR) with the project QUANTMOD O13/570706 and aknowledged also the support of the Idex “Universit\'e de Paris 2019”}}
\date{}

\maketitle

\begin{abstract}
We provide a definition of ephemeral multi-persistent modules and prove that the quotient category of persistent modules by the ephemeral ones is equivalent to the category of $\gamma$-sheaves. In the case of one-dimensional persistence, our definition agrees with the usual one showing that the observable category and the category of $\gamma$-sheaves are equivalent. We also establish isometry theorems between the category of persistent modules and $\gamma$-sheaves both endowed with their interleaving distance. Finally, we compare the interleaving and convolution distances. 
\end{abstract}

\tableofcontents

\section{Introduction}

One of the initial motivation of persistent homology was to provide a mean to estimate the topology of space from a finite noisy sample of itself. Persistent homology and more generally the concept of persistence have since been developed and have spread among many areas of mathematics, such as representation theory, symplectic topology and applied topology \cite{Asano2017,Botnan2018, Polterovich2019}.

Though persistence theory is well understood in the one-parameter case (see for instance \cite{Oudot2015} for an extensive exposition of the theory and its applications), its generalization to the multi-parameter case remains less understood, yet is important for applications \cite{Lesnick}. The first approach to study the category of multi-parameter persistence modules was with an eye coming from algebraic geometry and representation theory \cite{Carlsson2009}. Roughly speaking, the idea was to consider persistence modules as graded-modules over a polynomial ring. This allowed to link the theory of persistence with more classical areas of mathematics and allowed to show that a complete classification of persistence modules with more than one parameter is impossible. Nevertheless, one thing not to be forgotten is that the category of persistence modules is naturally endowed with the interleaving distance. Having applications in mind, one is more interested in computing the distance between two persistence modules, than to explicit the structural difference between those.

Sheaf theoretic methods have been recently introduced to study persistent homology. They first appeared in the work of J. Curry \cite{Curry2014}. In recent times, M. Kashiwara and P. Schapira in \cite{Kashiwara2018a, Kashiwara2018} introduced derived sheaf-theoretic methods in persistent homology.  Persistence homology studies filtered or multi-filtered topological spaces. The filtrations are indexed by the elements of an ordered vector space $\V$. The choice of the order is equivalent to the choice of a closed convex proper cone $\gamma \subset \V$. Hence, the idea underlying both approaches is to endow $\V$ with a topology depending on this cone. Whereas J. Curry's approach relies on Alexandrov's topology, M. Kashiwara and P. Schapira's approach is based on the $\gamma$-topology which was introduced by the same authors in \cite{KS90}. The goal of this paper is to compare these two approaches. A key feature of persistence theory is that the various versions of the space of persistent modules can be endowed with pseudo-distances. We focus our attention on two main types of pseudo-distances: the interleaving distances studied by several authors among which \cite{Chazal2016a,deSilva2018,Lesnick2012,Lesnick2015} and the convolution distance introduced in \cite{Kashiwara2018a} and studied in detail in the one-dimensional case in \cite{Berkouk}. Besides comparing the various categories of sheaves used in persistence theory (and especially multiparameter persistent homology), we establish isometry theorems between these categories endowed with their respective distances.

To compare Alexandrov sheaves and $\gamma$-sheaves, we first study morphisms of sites between the Alexandrov and the $\gamma$-topology. We precise the results of \cite[Section 1.4]{Kashiwara2018a} by introducing two morphisms of sites $\alpha \colon \V_\gamma \to \V_\al$ and $\beta \colon \V_\al \to \V_\gamma$ where $\V_\al$ denotes the vector space $\V$ endowed with the Alexandrov topology while $\V_\gamma$ designates $\V$ endowed with the $\gamma$ topology. This provides us with three distinct functors $\oim{\alpha}, \opb{\beta} \colon \Mod(\cor_{\V_\gamma}) \to \Mod(\cor_{\V_\al})$ and $\oim{\beta}=\opb{\alpha} \colon \Mod(\cor_{\V_\al}) \to \Mod(\cor_{\V_\gamma})$ where $\Mod(\cor_{\V_\gamma})$ (resp.\ $\Mod(\cor_{\V_\al})$) is the category of sheaves of $\cor$-modules on $\V_\gamma$ (resp.\ $\V_\al$). The properties of these functors allow us to define a well-behaved notion of ephemeral modules in arbitrary dimensions (Definition \ref{def:eph}). They correspond to Alexandrov sheaves which vanish when evaluated on open subsets of the $\gamma$-topology. In dimension one, our notion of ephemeral module coincides with the one introduced in \cite{Chazal2016a} and further studied in \cite{Chazal2016b} and \cite{Berkouk}. Then, we show that the quotient of the category $\Mod(\cor_{\V_\al})$ by its subcategory of ephemeral modules is equivalent to the category $\Mod(\cor_{\V_\gamma})$ (Theorem \ref{thm:GammaSh_SerreQ}). Specializing again our results to the situation where $\dim \V=1$, we obtain a canonical equivalence of categories between the observable category $\mathbf{Ob}$ of \cite{Chazal2016b} and the category $\Mod(\cor_{\V_\gamma})$ (Corollary \ref{cor:obgam}). This provides a natural description of the category of observable modules and  highlights the significance of the theory of $\gamma$-sheaves for studying persistent homology. We extend all these results to the derived setting. 

We establish an isometry theorem between the category of Alexandrov sheaves and $\gamma$-sheaves on $\V$ endowed with their respective interleaving distance (Theorem \ref{thm:isom} and Corollary \ref{cor:isomab}). Note that our approach does not rely on a structure theorem for persistence modules (as such theorem is not available in arbitrary dimension) but on the properties of the morphisms of sites $\alpha$ and $\beta$. We also study the properties of ephemeral modules with respect to the notion of interleaving and show that they correspond to modules which are interleaved with zero in all the directions allowed by the Alexandrov topology. This shows that the notion of ephemeral module is more delicate in higher dimensions than in dimension one. This being essentially due to the fact that in dimension one the boundary of the cone associated with the usual order on $\R$ is of dimension zero. 

Finally, we study the relation between the interleaving and the convolution distances on the category of $\gamma$-sheaves. The convolution distance depends on the choice of a norm on $\V$. Given an interleaving distance with respect to a vector $v$ in the interior of the cone $\gamma$, we introduce a preferred norm (see formula \eqref{eq:normcool}) and show that, under a mild assumption on the persistence modules considered, the convolution distance associated with this norm and the interleaving distance associated with $v$ are equal (Corollary \ref{cor:gamconv}).\\

\noindent \textbf{Acknowledgment:} The authors are grateful to Pierre Schapira for his scientific advice. The second author would like to thank Yannick Voglaire for several useful conversations. Both authors would like to thank the IMA for its excellent working conditions as part of this work was done during the Special Workshop on Bridging Statistics and Sheaves.  

\section{Sheaves on $\gamma$ and Alexandrov topology}

\subsection{$\gamma$ and Alexandrov topology}

\subsubsection{$\gamma$-topology}
Following \cite{Kashiwara2018a}, we briefly review the notion of $\gamma$-topology. We refer the reader to \cite{KS90} for more details.

Let $\V$ be a finite dimensional real vector space. We write $s$ for the sum map $s \colon \V \times \V \to \V$, $(x,y) \mapsto x+y$ and $a \colon x \mapsto -x$ for the antipodal map. If $A$ is a subset of $\V$, we write $A^a$ for the antipodal of $A$, that is the subset $\lbrace x \in \V \vert -x \in A \rbrace $.

A subset $C$ of the vector space $\V$ is a cone if
\begin{enumerate}[(i)]
 \item $0 \in \V$,
 \item $\R_{>0} \, C \subset C$.
\end{enumerate}
We say that a convex cone $C$ is proper if $C^a \cap C =\lbrace 0 \rbrace$.

Given a cone $C \subset \V$, we define its polar cone $C^\circ$ as the cone of $\V^\ast$
\begin{equation*}
C^\circ=\lbrace \xi \in \V^\ast \, | \, \forall v \in C, \langle \xi, v \rangle \geq 0 \rbrace.
\end{equation*} 
From now on, $\gamma$ denotes a
\begin{equation}\label{hyp:cone}
\textnormal{ \textit {closed proper convex cone with non-empty interior.}}
\end{equation}
We still write $\V$ for the vector space $\V$ endowed with the usual topology.

We say that a subset $A$ of $\V$ is \textit{$\gamma$-invariant} if $A=A+ \gamma$. The set of $\gamma$-invariant open subsets of $\V$ is a topology on $\V$ called the $\gamma$-topology. We denote by $\V_\gamma$ the vector space $\V$ endowed with the $\gamma$-topology. We write $\phi_\gamma \colon \V \to \V_{\gamma}$ for the continuous map whose underlying function is the identity.

If $A$ is a subset of $\V$, we write $\Int(A)$ for the interior of $A$ in the usual topology of $\V$. 

\begin{lemma}
Let $U$ be a $\gamma$-open then $U= \bigcup_{x \in U}  x+ \Int(\gamma)$.
\end{lemma}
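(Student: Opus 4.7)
The plan is to prove the two inclusions separately, using the two defining features of a $\gamma$-open set $U$: namely that $U$ is open in the usual topology of $\V$ and that $U$ is $\gamma$-invariant ($U = U + \gamma$).

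For the inclusion $\bigcup_{x \in U}(x + \Int(\gamma)) \subseteq U$, I would argue as follows. Fix $x \in U$. Since $\Int(\gamma) \subseteq \gamma$, we have $x + \Int(\gamma) \subseteq x + \gamma \subseteq U + \gamma = U$, where the last equality is $\gamma$-invariance.

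For the reverse inclusion $U \subseteq \bigcup_{x \in U}(x + \Int(\gamma))$, the idea is to exploit the assumption $\Int(\gamma) \neq \emptyset$ to slightly perturb a point $y \in U$ into a smaller point $x \in U$. Pick any $v \in \Int(\gamma)$, which exists by hypothesis \eqref{hyp:cone}. Because $U$ is open in the usual topology of $\V$, there is $t > 0$ such that $x := y - tv$ belongs to $U$. Since $\Int(\gamma)$ is stable under multiplication by positive scalars (it is the interior of a cone, hence itself a cone), we have $y - x = tv \in \Int(\gamma)$, so $y \in x + \Int(\gamma)$ with $x \in U$, as required.

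The only subtle point is knowing that $\Int(\gamma)$ is stable under $\R_{>0}$-scaling; this follows from the fact that $\gamma$ is a cone and multiplication by $t > 0$ is a homeomorphism of $\V$, hence sends interior to interior. No structural result about the $\gamma$-topology beyond the definition is needed, so I expect no real obstacle.
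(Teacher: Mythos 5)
Your proof is correct, and since the paper leaves this proof to the reader there is nothing to compare it against; the argument you give (the easy inclusion via $\gamma$-invariance, the reverse inclusion by pushing a point of $U$ slightly backward along a direction $v \in \Int(\gamma)$ using openness of $U$ in the usual topology, together with the observation that $\Int(\gamma)$ is a cone) is exactly the expected one.
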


\begin{proof}
The proof is left to the reader.
\end{proof}

\subsubsection{$\gamma$-sheaves}

In this section, following \cite{KS90}, we recall the notion of $\gamma$-sheaves and results borrowed to \cite{Kashiwara2018a} and \cite{GS2014}.

\begin{notation}
Let $\cor$ be a field. For a topological space $X$, we denote by $\cor_{X}$ the constant sheaf on $X$ with coefficient in $\cor$ and write $\Mod(\cor_{X})$ for the abelian category of $\cor_{X}$-modules, $\Ch(\cor_X)$ for the abelian category of chain complexes of $\cor_X$-modules, $\Der(\cor_{X})$ for the unbounded derived category of $\Mod(\cor_{X})$ and $\Derb(\cor_{X})$ for its bounded derived category. That is the full subcategory of $\Der(\cor_{X})$ whose objects are the $F \in \Der(\cor_{X})$ such that there exists $n \in \N$ such that for every $k \in \Z$ with $|k| \geq n$, $\Hn^k(F) \simeq 0$.
\end{notation}

We now state a result due to M. Kashiwara and P. Schapira that says that the bounded derived category of $\gamma$-sheaves is equivalent to a subcategory of  the usual bounded derived category of sheaves $\Derb(\cor_\V)$. This subcategory can be characterized by a microsupport condition. We refer the reader to \cite[Chapter V]{KS90} for the definition and properties of the microsupport.

Following \cite{Kashiwara2018a}, we set
\begin{align*}
\Derb_{\gamma^{\circ,a}}(\cor_\V)=\lbrace F \in \Derb(\cor_\V) \, \vert \, \SS(F) \subset \gamma^{\circ,a} \rbrace,\\
\Mod_{\gamma^{\circ,a}}(\cor_\V)=\Mod(\cor_\V)\cap\Derb_{\gamma^{\circ,a}}(\cor_\V).
\end{align*}

\begin{theorem}[{\cite[Theorem 1.5]{Kashiwara2018a}}]
Let $\gamma$ be a proper closed convex cone in $\V$. The functor $\roim{\phi_\gamma} \colon \Derb_{\gamma^{\circ,a}}(\cor_\V) \to \Derb(\cor_{\V_\gamma})$ is an equivalence of triangulated categories with quasi-inverse $\opb{\phi_\gamma}$.
\end{theorem}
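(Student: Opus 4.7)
The strategy is to verify that the adjoint pair $(\opb{\phi_\gamma}, \roim{\phi_\gamma})$, coming from continuity of $\phi_\gamma$, restricts to an equivalence between $\Derb(\cor_{\V_\gamma})$ and $\Derb_{\gamma^{\circ,a}}(\cor_\V)$. Concretely, I would establish three items: (i) $\opb{\phi_\gamma} G \in \Derb_{\gamma^{\circ,a}}(\cor_\V)$ for every $G \in \Derb(\cor_{\V_\gamma})$; (ii) the counit $\opb{\phi_\gamma}\roim{\phi_\gamma} F \to F$ is an isomorphism whenever $F \in \Derb_{\gamma^{\circ,a}}(\cor_\V)$; and (iii) $\opb{\phi_\gamma}$ is conservative, so that, by the triangle identity, the unit $G \to \roim{\phi_\gamma}\opb{\phi_\gamma} G$ is also an isomorphism.

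The heart of the argument is (ii), which I would check on stalks. At any $x \in \V$,
\[
(\opb{\phi_\gamma}\roim{\phi_\gamma} F)_x \;\simeq\; \colim_{x \in U,\ U\,\gamma\text{-open}} R\Gamma(U, F).
\]
By the lemma just above the theorem, the filter of $\gamma$-opens containing $x$ is cofinal with the family $\{(x - \epsilon v) + \Int(\gamma)\}_{\epsilon > 0}$ for any fixed $v \in \Int(\gamma)$, and standard sheaf theory identifies the colimit with $R\Gamma(x + \gamma, F)$. I would then invoke the non-characteristic deformation lemma of \cite{KS90}: the normal directions to the closed convex set $x + \gamma$ at its boundary lie in $\pm\gamma^\circ$, while $\SS(F) \subset \gamma^{\circ,a} = -\gamma^\circ$; since $\gamma$ is proper, $\gamma^\circ \cap (-\gamma^\circ) = \{0\}$, so $F$ is non-characteristic along $x + \gamma$ at $x$, yielding $R\Gamma(x+\gamma, F) \simeq F_x$.

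For (i), I would use that the stalks of $\opb{\phi_\gamma} G$ at any $x \in \V$ are, by construction, the $\gamma$-topology stalks $\colim_{x \in U,\ U\,\gamma\text{-open}} G(U)$; from this one reads off a one-sided translation invariance $\opb{T_v}\opb{\phi_\gamma} G \simeq \opb{\phi_\gamma} G$ for every $v \in \gamma$ (since $T_{-v}$ sends the filter of $\gamma$-opens through $x$ to that through $x+v$), and this invariance along $\gamma$ implies $\SS(\opb{\phi_\gamma} G) \subset \V \times \gamma^{\circ,a}$ by a standard microsupport estimate. Item (iii) is straightforward: $\opb{\phi_\gamma} G = 0$ forces all $\gamma$-topology stalks of $G$ to vanish, hence $G = 0$. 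The main obstacle is (ii): producing the propagation isomorphism $R\Gamma(x + \gamma, F) \simeq F_x$ from the hypothesis $\SS(F) \subset \gamma^{\circ,a}$. This rests crucially on the properness assumption on $\gamma$, which ensures $\gamma^\circ$ has non-empty interior in $\V^*$ so that a generic sweeping covector exists and the sweep of $x + \gamma$ down to the point $x$ stays non-characteristic throughout.
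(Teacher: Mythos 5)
The paper does not prove this statement; it is quoted verbatim from \cite[Theorem 1.5]{Kashiwara2018a}, so I can only assess your argument on its own terms. Your overall scaffolding is the right one: restrict the adjunction $\opb{\phi_\gamma}\dashv\roim{\phi_\gamma}$, show the counit is an isomorphism on $\Derb_{\gamma^{\circ,a}}(\cor_\V)$, and recover the unit via conservativity and a triangle identity. This matches the structure of the underlying result \cite[Proposition 3.5.3]{KS90}. However, both microlocal steps have genuine gaps.

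In step (ii), the assertion that ``standard sheaf theory identifies the colimit with $R\Gamma(x+\gamma,F)$'' is not correct. The closed set $x+\gamma$ is non-compact, and the colimit of sections over open neighborhoods shrinking to a non-compact closed set does not in general compute the sections over that closed set (the standard fact is for compact subsets of locally compact spaces). For instance, taking $F$ to be the pushforward of the constant sheaf on a closed discrete set escaping to infinity inside $x+\Int(\gamma^a)$, the colimit of $\Gamma\bigl((x-\epsilon v)+\Int(\gamma);F\bigr)$ is enormous while $R\Gamma(x+\gamma;F)=0$ — of course such an $F$ violates the microsupport hypothesis, but that shows the step is not ``standard'' and must use that hypothesis, which as written it does not. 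Compounding this, the non-characteristic deformation lemma \cite[Proposition 2.7.2]{KS90} carries the compactness hypothesis that $\overline{U_t\setminus U_s}\cap\supp(F)$ be compact; for the family $U_\epsilon=(x-\epsilon v)+\Int(\gamma)$ these shells are unbounded once $\dim\V\geq 2$, and $\supp(F)$ need not be $\gamma$-proper (already $F=\cor_\V$ fails). You flag ``properness'' at the end, but the relevant properness there is of $\gamma$, which guarantees $\Int(\gamma^\circ)\neq\emptyset$; it does nothing to salvage the compactness condition on $\supp(F)$. This is precisely where the real technical work of \cite[Proposition 3.5.3]{KS90} lives, and the outline does not reproduce it.

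Step (i) also has an error: the claimed isomorphism $\opb{T_v}\opb{\phi_\gamma}G\simeq\opb{\phi_\gamma}G$ for $v\in\gamma$ is false. Since $\opb{T_v}$ and $\opb{\phi_\gamma}$ both commute with translations, what is true is $\opb{T_v}\opb{\phi_\gamma}G\simeq\opb{\phi_\gamma}\opb{T_v}G$, and $\opb{T_v}G\not\simeq G$ in general. On stalks, $\{U\ \gamma\text{-open}:x-v\in U\}$ is a \emph{sub}filter of $\{U:x\in U\}$ but is not cofinal, so one only gets a canonical morphism $\opb{\tau_v}\opb{\phi_\gamma}G\to\opb{\phi_\gamma}G$, not an isomorphism; a one-dimensional $G$ such as $\roim{\phi_\gamma}\cor_{(-\infty,0]}$ with $\gamma=[0,\infty)$ already gives distinct stalks. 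The microsupport inclusion $\SS(\opb{\phi_\gamma}G)\subset\V\times\gamma^{\circ,a}$ should instead be obtained, e.g., by verifying $\opb{\phi_\gamma}G\sconv[np]\cor_{\gamma^a}\simeq\opb{\phi_\gamma}G$ and applying Proposition \ref{prop:gammaloc}, or by writing $\opb{\phi_\gamma}G$ in terms of constant sheaves on $\gamma$-opens, all of which do satisfy the microsupport bound.
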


\begin{corollary}
The functor $\oim{\phi_\gamma} \colon \Mod_{\gamma^{\circ,a}}(\cor_\V) \to \Mod(\cor_{\V_\gamma})$ is an equivalence of categories with quasi-inverse $\opb{\phi_\gamma}$.
\end{corollary}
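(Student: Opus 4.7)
The plan is to deduce the abelian-level equivalence from the derived equivalence recalled just above by showing that the latter is t-exact with respect to the standard t-structures on both sides.

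First, I would note that $\opb{\phi_\gamma}$ is an exact functor of abelian categories, being the inverse image by a continuous map. Consequently, its derived functor $\opb{\phi_\gamma}\colon \Derb(\cor_{\V_\gamma}) \to \Derb_{\gamma^{\circ,a}}(\cor_\V)$ is t-exact with respect to the standard t-structure on $\Derb(\cor_{\V_\gamma})$ and the one induced from $\Derb(\cor_\V)$. For this to make sense, I first need to check that the standard t-structure on $\Derb(\cor_\V)$ restricts to $\Derb_{\gamma^{\circ,a}}(\cor_\V)$, i.e.\ that this subcategory is stable under the truncation functors $\tau^{\leq n}$ and $\tau^{\geq n}$. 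This is a direct consequence of the theorem itself: any $F \in \Derb_{\gamma^{\circ,a}}(\cor_\V)$ is of the form $\opb{\phi_\gamma}G$ with $G=\roim{\phi_\gamma}F$, and exactness of $\opb{\phi_\gamma}$ yields
\begin{equation*}
\tau^{\leq n}F \simeq \tau^{\leq n}\opb{\phi_\gamma}G \simeq \opb{\phi_\gamma}\tau^{\leq n}G,
\end{equation*}
which lies in $\Derb_{\gamma^{\circ,a}}(\cor_\V)$, and similarly for $\tau^{\geq n}$. Thus $\Derb_{\gamma^{\circ,a}}(\cor_\V)$ inherits a t-structure whose heart, being $\Mod(\cor_\V) \cap \Derb_{\gamma^{\circ,a}}(\cor_\V)$, is by definition $\Mod_{\gamma^{\circ,a}}(\cor_\V)$.

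Next, since the quasi-inverse of a t-exact equivalence of triangulated categories is itself t-exact, $\roim{\phi_\gamma}\colon \Derb_{\gamma^{\circ,a}}(\cor_\V) \to \Derb(\cor_{\V_\gamma})$ is also t-exact. Any t-exact equivalence restricts to an equivalence between the hearts, so we obtain an equivalence $\Mod_{\gamma^{\circ,a}}(\cor_\V) \simeq \Mod(\cor_{\V_\gamma})$ induced by the two functors in question.

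Finally, I would unpack t-exactness of $\roim{\phi_\gamma}$ to identify the restricted functor with the underived pushforward: for $F \in \Mod_{\gamma^{\circ,a}}(\cor_\V)$, t-exactness gives $R^i\phi_{\gamma,*}F \simeq 0$ for $i \neq 0$, hence $\roim{\phi_\gamma}F \simeq \oim{\phi_\gamma}F$ naturally. A symmetric remark for $\opb{\phi_\gamma}$ (which is already exact on abelian categories) shows that the restriction of the derived adjunction to the hearts is the underived one, proving the claim. The only mildly delicate point is the stability of $\Derb_{\gamma^{\circ,a}}(\cor_\V)$ under truncation, which a priori is a microlocal statement about $\SS(\tau^{\leq n}F) \subset \SS(F)$, but here is obtained for free by pulling back truncations through the equivalence.
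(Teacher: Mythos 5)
Your argument is correct, and it is essentially the implicit reasoning the paper expects: the corollary is stated without proof precisely because it follows from the derived equivalence by passing to hearts. The key point you identified — that stability of $\Derb_{\gamma^{\circ,a}}(\cor_\V)$ under the truncation functors need not be checked by a microsupport estimate but is obtained for free by transporting truncations through the equivalence via the $t$-exact functor $\opb{\phi_\gamma}$ — is exactly the right observation, and the rest (quasi-inverse of a $t$-exact equivalence is $t$-exact; a $t$-exact equivalence restricts to the hearts; $\Hn^0\roim{\phi_\gamma}\simeq\oim{\phi_\gamma}$ on sheaves) is standard.
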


Consider the following maps: 
\begin{equation*}
s : \V \times \V \to \V, \quad s(x,y) = x + y\\  
\end{equation*}
\begin{equation*}
q_i : \V \times \V \to \V \; (i=1,2) ~~~q_1(x,y) = x,~q_2(x,y) = y
\end{equation*}

Let $F$ and $G$ in $\Derb(\cor_V)$, we set

\begin{equation*}
F\sconv[np] G = \roim{s}(\opb{q_1} F \otimes \opb{q_2} G)=\roim{s}( F \boxtimes  G).
\end{equation*}

If $Z$ is a closed subset of $\V$, we denote by $\cor_Z$ the sheaf associated to the closed subset $Z$. The canonical map $\cor_{\gamma^a} \to \cor_{\lbrace 0 \rbrace}$ induces a morphism
\begin{equation}\label{mor:gammacar}
F \sconv[np] \cor_{\gamma^a} \to F.
\end{equation}

\begin{proposition}[{\cite[Proposition 3.9]{GS2014}}]\label{prop:gammaloc}
Let $F \in \Derb(\cor_\V)$. Then $F \in \Derb_{\gamma^{\circ,a}}(\cor_\V)$ if and only if the morphism \eqref{mor:gammacar} is an isomorphism.
\end{proposition}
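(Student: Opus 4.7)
The plan is to prove each direction separately, relying on the microsupport functoriality of $\sconv[np] = \roim{s}\!\circ\! \boxtimes$ for one direction and a stalk-level computation for the other.

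For the implication from \eqref{mor:gammacar} being an isomorphism to $\SS(F) \subset \gamma^{\circ,a}$, I would first show that $F \sconv[np] \cor_{\gamma^a}$ automatically lies in $\Derb_{\gamma^{\circ,a}}(\cor_\V)$ for any $F$. The starting input is the microsupport of $\cor_{\gamma^a}$: since $\gamma^a$ is a closed convex cone with polar $(\gamma^a)^\circ = \gamma^{\circ,a}$, the standard conormal computation for closed convex cones yields $\SS(\cor_{\gamma^a}) \subset \gamma^a \times \gamma^{\circ,a}$. Combining the estimate $\SS(F \boxtimes \cor_{\gamma^a}) \subset \SS(F) \times \SS(\cor_{\gamma^a})$ with the microsupport bound for $\roim{s}$ (whose transpose differential is the diagonal $\xi \mapsto (\xi,\xi)$) forces the covector component of $\SS(F \sconv[np] \cor_{\gamma^a})$ to lie in $\gamma^{\circ,a}$. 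Assuming \eqref{mor:gammacar} is an isomorphism then yields $F \cong F \sconv[np] \cor_{\gamma^a} \in \Derb_{\gamma^{\circ,a}}(\cor_\V)$.

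For the converse, I would verify the isomorphism stalkwise. Factoring $s$ through the closed embedding $\V \times \gamma^a \hookrightarrow \V \times \V$ and applying a projection formula / base change argument, one identifies
\[
(F \sconv[np] \cor_{\gamma^a})_x \;\cong\; \Rg(x + \gamma, F)
\]
(using $-\gamma^a = \gamma$), with the map \eqref{mor:gammacar} corresponding to the restriction $\Rg(x+\gamma, F) \to F_x$ induced by $\{x\} \hookrightarrow x + \gamma$. Under the hypothesis $\SS(F) \subset \gamma^{\circ,a}$, a Kashiwara-Schapira non-characteristic propagation argument applies: $\SS(\cor_{x+\gamma}) \subset \V \times \gamma^\circ$ by the same conormal computation, while $\SS(F) \subset \V \times (-\gamma^\circ)$; since $\gamma$ is proper with non-empty interior, $\gamma^\circ \cap (-\gamma^\circ) = \{0\}$, so the two microsupports meet only at the zero section. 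This transversality forces the restriction to be an isomorphism.

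The main obstacle will be making the propagation argument rigorous at the apex $x$, which is a non-smooth boundary point of the closed convex cone $x + \gamma$: one option is to exhaust $x + \gamma$ by approximating sets with smooth boundary and apply the continuity form of non-characteristic deformation, another is to invoke a version tailored to closed convex subsets. A secondary technical point is that the $\roim{s}$ microsupport estimate is traditionally stated under a properness hypothesis; here this can be handled by noting that $\supp(F \boxtimes \cor_{\gamma^a}) \subset \supp F \times \gamma^a$ and that $s$ is proper in the $\gamma^a$-direction over bounded base subsets, which suffices to localize the argument.
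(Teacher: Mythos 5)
The paper cites this as [GS2014, Prop.\ 3.9] and gives no internal proof, so I am assessing your plan on its own terms. The two-direction split is natural and the propagation idea for the converse is the right one, but there are two genuine gaps.

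\textbf{Forward direction.} The estimate $\SS(\roim{s}G)\subset s_\pi s_d^{-1}\SS(G)$ that you invoke requires $s$ to be proper on $\supp G$, and your proposed workaround does not establish this. For a compact $K\subset\V$, the set $s^{-1}(K)\cap(\V\times\gamma^a)=\{(a,b):a+b\in K,\ b\in\gamma^a\}$ is unbounded: fix $k\in K$, let $b\to\infty$ in $\gamma^a$ and take $a=k-b$. Intersecting further with $\supp F\times\gamma^a$ only helps when $\supp F$ is $\gamma$-proper, which is not part of the hypotheses (take $F=\cor_\V$). So ``localizing over bounded base subsets'' does not yield properness, and the $\roim$-estimate of [KS90, Prop.\ 5.4.4] cannot be quoted directly. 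The inclusion $\SS(F\sconv[np]\cor_{\gamma^a})\subset\V\times\gamma^{\circ,a}$ is what you need, but it has to be obtained by another route — for instance by showing directly, from $\Gamma\bigl(U;F\sconv[np]\cor_{\gamma^a}\bigr)\simeq\Rg\bigl(s^{-1}(U)\cap(\V\times\gamma^a);\opb{q_1}F\bigr)$ and the stability $\gamma^a+\gamma^a=\gamma^a$, that sections propagate along $\Int(\gamma)$-directions, which is exactly the $\gamma^{\circ,a}$-microsupport bound stated in propagation form.

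\textbf{Converse direction.} The identification $(F\sconv[np]\cor_{\gamma^a})_x\simeq\Rg(x+\gamma;F)$ is not an instance of proper base change, because $\roim{s}$ is a non-proper pushforward: the stalk is $\varinjlim_{U\ni x}\Rg\bigl(s^{-1}(U);F\boxtimes\cor_{\gamma^a}\bigr)$, and passing from the shrinking tubes $s^{-1}(U)\cap(\V\times\gamma^a)$ down to the single fiber $s^{-1}(x)\cap(\V\times\gamma^a)\cong x+\gamma$ is precisely where the hypothesis $\SS(F)\subset\gamma^{\circ,a}$ must enter. So this identification has to be proved, not quoted, and proving it is essentially the same propagation argument you then run a second time to compare $\Rg(x+\gamma;F)$ with $F_x$; as written your plan hides the main work inside an unjustified step. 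The apex-smoothness issue you raise is real but standard, and is already packaged in the Kashiwara--Schapira treatment of proper closed convex cones and the non-characteristic deformation lemma ([KS90, \S3.5, Prop.\ 5.2.1]), so it is not the main obstruction.
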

We finally recall the following notion extracted from \cite{Kashiwara2018a}.
\begin{definition}
Let $A$ be a subset of $\V$. We say that $A$ is $\gamma$-proper if  the map $s$ is proper on $\gamma \times A$.
\end{definition}

\subsubsection{Alexandrov topology}
Let $(X, \leq)$ be a preordered set. A lower (resp. upper) set $U$ of $(X, \leq)$ is a subset of $X$ such that if $x \in U$ and $y \in X$ with $y \leq x$ (resp. $x \leq y$) then $y \in U$.

By convention, the Alexandrov topology on $(X, \leq)$ is the topology whose open sets are the lower sets. A basis of this topology is given by the sets $\mathscr{D}(x)=\lbrace y \in X \vert \, y \leq x \rbrace$ for $x \in X$. Note that $\mathscr{D}(x)$ is the smallest open set containing $x$. We write $X_{\mathfrak{a} (\leq)}$ for $X$ endowed with the Alexandrov topology associated with the preorder $\leq$. If there is no risk of confusion, we omit the preoder and simply write $X_{\mathfrak{a}}$.

We recall the following classical fact.

\begin{proposition}
Let $(X, \leq)$ and $(Y,\preceq)$ be two preorders. A function $f \colon X_\mathfrak{a} \to Y_\mathfrak{a}$ is continuous if and only if $f\colon (X, \leq) \to (Y,\preceq)$ is order preserving.
\end{proposition}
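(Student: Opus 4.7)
The plan is to prove both implications directly from the definitions, using the basic open sets $\mathscr{D}(x) = \{y \in X \mid y \leq x\}$ for the forward direction and the definition of lower set for the backward direction.

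For the forward direction, I assume $f \colon X_\mathfrak{a} \to Y_\mathfrak{a}$ is continuous and fix $x_1, x_2 \in X$ with $x_1 \leq x_2$. I consider the basic open set $\mathscr{D}(f(x_2)) \subset Y$ and its preimage $U := f^{-1}(\mathscr{D}(f(x_2)))$, which is open in $X_\mathfrak{a}$ by continuity, hence a lower set for $\leq$. Since $f(x_2) \in \mathscr{D}(f(x_2))$, we have $x_2 \in U$, and because $U$ is a lower set and $x_1 \leq x_2$, it follows that $x_1 \in U$. Thus $f(x_1) \in \mathscr{D}(f(x_2))$, meaning $f(x_1) \preceq f(x_2)$.

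For the backward direction, I assume $f \colon (X,\leq) \to (Y,\preceq)$ is order-preserving and take an arbitrary open set $V \subset Y_\mathfrak{a}$, i.e.\ a lower set for $\preceq$. I check that $f^{-1}(V)$ is a lower set in $(X,\leq)$: if $x \in f^{-1}(V)$ and $y \leq x$, then $f(y) \preceq f(x) \in V$, so $f(y) \in V$ since $V$ is a lower set, and therefore $y \in f^{-1}(V)$. Hence $f^{-1}(V)$ is open in $X_\mathfrak{a}$, proving continuity.

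Neither step should present a real obstacle: this is essentially an unwinding of definitions, with the only conceptual point being the reliance on the fact that $\mathscr{D}(x)$ is the smallest open set containing $x$ (already recorded in the excerpt). Since the paper calls this a classical fact, I would expect the authors to either leave the proof to the reader or give a very short argument along these exact lines.
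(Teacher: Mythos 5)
Your proof is correct, and the paper itself states this proposition as a ``classical fact'' without supplying any proof, so there is no authorial argument to compare against; your definition-unwinding fills in exactly the omitted details. One small remark: your proof never actually needs that $\mathscr{D}(f(x_2))$ is the \emph{smallest} open neighbourhood of $f(x_2)$ --- it suffices that it is \emph{some} open set containing $f(x_2)$ --- so the appeal to minimality in your closing comment is unnecessary.
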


\subsubsection{Alexandrov sheaves}
Let $\gamma$ be a closed proper convex cone of $\V$. The datum of  $\gamma$ endows $\V$ with the order 
\begin{center}
$x \leq_\gamma y$ if and only if $x+ \gamma \subset y+\gamma$. 
\end{center}
Consider the topological space $\V_{\al(\leq_\gamma)}$. For brevity, we write $\V_{\al(\gamma)}$ instead of $\V_{\al(\leq_\gamma)}$. If there is no risk of confusion, we write $\V_{\mathfrak{a}}$ instead of $\V_{\mathfrak{a}(\gamma)}$. An Alexandrov sheaf is an object of the abelian category $\Mod(\cor_{\V_{\al}})$. Recall that we denote by $\Der(\cor_{\V_{\al}})$ its derived category and by $\Derb(\cor_{\V_{\al}})$ its bounded derived category.

We denote by $\V_{\leq_\gamma}$ the category whose objects are the elements of $\V$ and given $x$ and $y$ in $\V$, there is exactly one morphism from $x$ to $y$ if an only if $x \leq_\gamma y$ . 
If there is no risk of confusion, we simply write 
$\V_\leq$ and set
\begin{equation*}
\Mod(\V_\leq) := \Fun((\V_\leq)^{\textnormal{op}},\Mod(\cor)).
\end{equation*}
A persistence module over $\V_\leq$ is an object of $\Mod(\V_\leq)$.
We write $\V^{\textnormal{top}}_\leq$ for $\V_\leq$ endowed with the trivial Grothendieck topology (that is the one for which all the sieves are representable). Note that on $\V^{\textnormal{top}}_\leq$ all presheaves are sheaves. Hence, the forgetful functor
$for \colon \V^{\textnormal{top}}_\leq \to \V_\leq$ induces an equivalence
\begin{equation*}
\Mod(\cor_{\V^{\textnormal{top}}_\leq})\stackrel{\sim}{\to}\Mod(\V_\leq).
\end{equation*}
For this reason, we will not distinguished between $\V^{\textnormal{top}}_\leq$ and $\V_\leq$.
There is a morphism of sites $\theta \colon \V_\al \to \V_\leq$ defined by
\begin{equation*}
 \theta^t \colon \V_\leq \to \Op(\V_\al), \quad x \mapsto x+\gamma.
\end{equation*}
The following statement is due to J. Curry. We refer to \cite{Kashiwara2018a} for a proof.

\begin{proposition} The functor
\begin{equation*}
\theta_\ast \colon \Mod(\cor_{\V_{\al}})  \to \Mod(\V_\leq)
\end{equation*}
is an equivalence of categories.
\end{proposition}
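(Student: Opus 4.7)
The plan is to construct an explicit quasi-inverse to $\theta_\ast$. First, I would establish the basic identification $\mathscr{D}(x) = x+\gamma$: the inclusion $x+\gamma \subset \mathscr{D}(x)$ follows because $y = x+c$ with $c \in \gamma$ implies $y+\gamma \subset x+\gamma$, hence $y \leq_\gamma x$; the reverse inclusion uses $0 \in \gamma$ to deduce $y \in y+\gamma \subset x+\gamma$ whenever $y \leq_\gamma x$. In particular $(\theta_\ast F)(x) = F(\mathscr{D}(x))$, and $\{\mathscr{D}(x)\}_{x \in \V}$ is a basis of $\V_\al$ with $\mathscr{D}(y) \subset \mathscr{D}(x)$ iff $y \leq_\gamma x$.

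Next, I would define a candidate quasi-inverse $\Psi \colon \Mod(\V_\leq) \to \Mod(\cor_{\V_\al})$ by the formula $\Psi(G)(U) := \lim_{x \in U} G(x)$ for every Alexandrov-open $U$, the limit being taken over $U$ viewed as a full subposet of $\V_\leq$, with restriction morphisms $\Psi(G)(U) \to \Psi(G)(V)$ for $V \subset U$ given by canonical projection of limits. The sheaf property of $\Psi(G)$ reduces, via a Fubini-type argument for limits, to the observation that a compatible family $(g_x)_{x \in U}$ with $g_x \in G(x)$ is the same as giving, for each member $U_i$ of an Alexandrov cover of $U$, an element of $\lim_{x \in U_i} G(x)$ agreeing on overlaps $U_i \cap U_j$.

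Finally, I would verify the unit and counit are isomorphisms. For the counit, $(\theta_\ast \Psi G)(x) = (\Psi G)(\mathscr{D}(x)) = \lim_{y \leq_\gamma x} G(y) = G(x)$, since $x$ is the terminal object of the subposet $\mathscr{D}(x)$, making the diagram trivially cofiltered by a terminal object. For the unit, the basic-open cover $U = \bigcup_{x \in U} \mathscr{D}(x)$ combined with the sheaf axiom for $F$ yields $F(U) \cong \lim_{x \in U} F(\mathscr{D}(x)) = (\Psi \theta_\ast F)(U)$.

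The main obstacle lies in the last step: one must identify the equalizer-of-products presentation of the sheaf axiom on the cover $\{\mathscr{D}(x)\}_{x \in U}$ with the limit $\lim_{x \in U} F(\mathscr{D}(x))$, despite the intersections $\mathscr{D}(x) \cap \mathscr{D}(y)$ generally not being basic opens themselves. This is settled by noting $\mathscr{D}(x) \cap \mathscr{D}(y) = \bigcup_{z \leq_\gamma x,\, z \leq_\gamma y} \mathscr{D}(z)$, so that iterating the sheaf axiom reduces all compatibility constraints to morphisms internal to the indexing poset $U$, confirming that the sheaf condition on $\V_\al$ is equivalent to the limit being computed over $(U,\leq_\gamma)$.
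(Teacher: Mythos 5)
Your argument is correct and is the standard one due to Curry; note that the paper itself does not include a proof of this proposition, referring instead to Kashiwara--Schapira, so there is no in-paper argument to compare against literally. Your quasi-inverse $\Psi(G)(U) = \lim_{x \in U} G(x)$ (the limit being over $U^{\opp}$, with $U$ viewed as a full subposet of $\V_\leq$) is the right construction, and the three verifications you flag --- the sheaf property of $\Psi(G)$, the counit isomorphism coming from $x$ being the maximum of $\mathscr{D}(x)$, and the identification of the basic-open equalizer with the poset-indexed limit --- are all settled by your reasoning. Two remarks for precision. First, in ``$x$ is the terminal object of the subposet $\mathscr{D}(x)$, making the diagram trivially cofiltered,'' the operative fact is simply that a limit indexed by a category with an initial object (here $x$ is initial in $\mathscr{D}(x)^{\opp}$) equals the value at that object; cofilteredness is a weaker and not directly relevant property. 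Second, both in checking the sheaf axiom for $\Psi(G)$ and in matching the \v{C}ech equalizer with $\lim_{x\in U} F(\mathscr{D}(x))$, the step that is doing all the work is that Alexandrov-open sets are lower sets, so $z \leq_\gamma x$ and $x \in U$ force $z \in U$; this is what guarantees that the auxiliary indices $z$ you introduce (e.g.\ in $\mathscr{D}(x) \cap \mathscr{D}(y) = \bigcup_{z \leq_\gamma x,\, z \leq_\gamma y} \mathscr{D}(z)$) stay inside $U$, and hence that the compatibility constraints genuinely reduce to morphisms internal to the poset $U$. With these points made explicit, the proof is complete.
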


\subsection{Relation between $\gamma$-sheaves and Alexandrov sheaves}

In order to compare $\gamma$-sheaves and Alexandrov sheaves we use morphisms of sites. These are morphisms between Grothendieck topologies and in particular usual topologies considered as Grothendieck topologies. It is important to keep in mind that some morphisms of sites between usual topological spaces are not induced by continuous maps. This is why we use this notion. Operations on sheaves can also be defined for morphisms of sites. These operations on sheaves generalize the ones induced by continuous maps between topological spaces. We refer the reader to \cite{KS2006} for a detailed presentation.

Let $\V$ be a finite dimensional real vector space and $\gamma$ a cone of $\V$ satisfying hypothesis \eqref{hyp:cone}. Recall that we have defined a preorder $\leq$ on $\V$ as follow : 

\begin{equation*}
x \leq_\gamma y \Leftrightarrow x+\gamma \subset y +\gamma.
\end{equation*}
By definition of the Alexandrov topology, the open sets $\mathscr{D}(x)=x + \gamma$ for $x \in \V$ form a base of the topology of $\V_\al$. 

We define the functor $\alpha^t \colon \Op(\V_\al) \to \Op(\V_\gamma)$ by
\begin{equation*}
\alpha^t \colon \Op(\V_\al) \to \Op(\V_\gamma), \quad U=\bigcup_{x \in U} x+\gamma \mapsto \bigcup_{x \in U} x+\Int(\gamma).
\end{equation*}

\begin{lemma}
The functor $\alpha^t$ is a morphism of sites $\alpha \colon \V_\gamma \to \V_\al$.
\end{lemma}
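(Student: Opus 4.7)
The plan is to check the three defining conditions for $\alpha^t$ to be a morphism of sites $\alpha\colon \V_\gamma\to\V_\al$: that it is well-defined as a functor $\Op(\V_\al)\to\Op(\V_\gamma)$, that it preserves fiber products (i.e.\ finite intersections, together with the terminal object $\V$), and that it preserves coverings. Functoriality with respect to inclusions is immediate since $U\subset U'$ clearly gives $\bigcup_{x\in U}x+\Int(\gamma)\subset\bigcup_{x\in U'}x+\Int(\gamma)$.

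For well-definedness, I would observe that every Alexandrov open $U$ satisfies $U=U+\gamma$, so $\alpha^t(U)=U+\Int(\gamma)$. This is open in the usual topology as a union of open sets $x+\Int(\gamma)$, and it is $\gamma$-invariant because $\Int(\gamma)+\gamma\subset\Int(\gamma)$ (which follows from the fact that $\gamma$ is a convex cone, hence closed under addition, combined with the openness of $\Int(\gamma)$). Preservation of the terminal object is clear: $\alpha^t(\V)=\V+\Int(\gamma)=\V$ since $\Int(\gamma)\neq\emptyset$. Preservation of coverings is also straightforward: if $U=\bigcup_i U_i$ in $\Op(\V_\al)$, then $\alpha^t(U)=\bigcup_{x\in U}x+\Int(\gamma)=\bigcup_i\bigcup_{x\in U_i}x+\Int(\gamma)=\bigcup_i\alpha^t(U_i)$.

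The main obstacle is preservation of binary intersections, i.e.\ showing $\alpha^t(U\cap V)=\alpha^t(U)\cap\alpha^t(V)$ for $U,V\in\Op(\V_\al)$. One inclusion is obvious. For the reverse, I would take $z\in\alpha^t(U)\cap\alpha^t(V)$ and write $z=u+a=v+b$ with $u\in U$, $v\in V$, $a,b\in\Int(\gamma)$. Fixing any auxiliary $c\in\Int(\gamma)$ and choosing $\epsilon>0$ small enough, both $a-\epsilon c$ and $b-\epsilon c$ still lie in $\Int(\gamma)\subset\gamma$ (openness of $\Int(\gamma)$ around $a$ and $b$). Setting $w:=z-\epsilon c$, I then have $w=u+(a-\epsilon c)\in u+\gamma\subset U+\gamma=U$ by Alexandrov openness of $U$, and symmetrically $w\in V$. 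Hence $w\in U\cap V$ and $z=w+\epsilon c\in(U\cap V)+\Int(\gamma)=\alpha^t(U\cap V)$. The non-trivial ingredient here is the non-emptiness of $\Int(\gamma)$ (hypothesis \eqref{hyp:cone}), which is exactly what makes the perturbation argument work; without it, the identity $\alpha^t(U\cap V)=\alpha^t(U)\cap\alpha^t(V)$ would fail.

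Together these three verifications yield that $\alpha^t$ is a morphism of sites $\alpha\colon\V_\gamma\to\V_\al$ in the sense of \cite{KS2006}.
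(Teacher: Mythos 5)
Your proof is correct, and it takes a genuinely different route from the paper's on the one nontrivial step, the inclusion $\alpha^t(U)\cap\alpha^t(V)\subset\alpha^t(U\cap V)$. The paper leans on the preceding structure lemma for $\gamma$-opens (that a $\gamma$-open $W$ satisfies $W=\bigcup_{x\in W}x+\Int(\gamma)$), applies it to $\alpha^t(U)\cap\alpha^t(V)$, and then implicitly uses that $\alpha^t(U)\subset U$ and $\alpha^t(V)\subset V$ (a consequence of Alexandrov openness) to slide each $z+\Int(\gamma)$ into $(U\cap V)+\Int(\gamma)$. You instead unfold the same mechanism by hand: a perturbation $z\mapsto z-\epsilon c$ for small $\epsilon$ and $c\in\Int(\gamma)$, using openness of $\Int(\gamma)$ to keep $a-\epsilon c$ and $b-\epsilon c$ inside $\Int(\gamma)$, which lands $w=z-\epsilon c$ in $U\cap V$ and exhibits $z=w+\epsilon c\in(U\cap V)+\Int(\gamma)$. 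The two arguments are essentially equivalent in content (the paper's structure lemma is proved by the same kind of perturbation), but yours is self-contained and does not invoke the helper lemma, while the paper's is terser at the cost of an implicit step. Your proof also makes the roles of the standing hypothesis \eqref{hyp:cone} explicit (non-emptiness of $\Int(\gamma)$ drives the perturbation and ensures $\alpha^t(\V)=\V$), which is a useful clarification. One small stylistic point: you write both $a-\epsilon c\in\Int(\gamma)\subset\gamma$ and then conclude $w\in u+\gamma$; in fact $w\in u+\Int(\gamma)$, which is slightly stronger, though not needed.
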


\begin{proof}
It is clear that $\alpha^t$ preserves covering. Let us check that it preserves finite limits. For that purpose it is sufficient to check that it preserves the final object (clear) and fibered products which reduces in this particular setting to show that
\begin{equation*}
\alpha^t(U \cap  V)=\alpha^t(U) \cap \alpha^t(V).
\end{equation*}
On one hand
\begin{align*}
\alpha^t(U \cap  V)&= \bigcup_{x \in U \cap V} x+\Int(\gamma). 
\end{align*}
On the other hand
\begin{align*}
 \alpha^t(U) \cap  \alpha^t(V)&= \!\!\bigcup_{z \in \alpha^t(U) \cap  \alpha^t(V)} \!\!z+\Int(\gamma).
\end{align*}
Hence, $ \alpha^t(U) \cap \alpha^t(V) \subset \alpha^t(U \cap  V) $. As $U \cap V$ is included in $U$ and $V$ it follows by functoriality that $\alpha^t(U \cap V)$ is included in $\alpha^t(U)$ and $\alpha^t(V)$. This proves the reverse inclusion
\end{proof}

We also have the following morphism of sites
\begin{equation*}
\beta \colon \V_\al \to \V_\gamma ,\quad \beta^t(x+\Int(\gamma))=x+\Int(\gamma).
\end{equation*}

\begin{fact} \label{fact:essential}
The composition of $\beta$ and $\alpha$ satisfies $\beta \circ \alpha = \id$. 
\end{fact}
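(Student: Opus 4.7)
The plan is to verify the equality at the level of the underlying functors on opens. Recall that a morphism of sites $f$ is encoded by its functor $f^t$ going in the opposite direction, so composition is contravariant: if $\alpha\colon \V_\gamma \to \V_\al$ and $\beta\colon \V_\al \to \V_\gamma$, then $(\beta\circ\alpha)^t = \alpha^t \circ \beta^t \colon \Op(\V_\gamma) \to \Op(\V_\gamma)$. The identity morphism of sites on $\V_\gamma$ corresponds to $\id\colon \Op(\V_\gamma) \to \Op(\V_\gamma)$, so it suffices to check $\alpha^t \circ \beta^t = \id_{\Op(\V_\gamma)}$.

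Concretely, pick a $\gamma$-open $U \subset \V$. Because $U$ is $\gamma$-invariant, it is in particular an Alexandrov open set, and by the definition of $\beta^t$ on basic opens $x+\Int(\gamma)$ extended to arbitrary $\gamma$-opens via the decomposition of the preceding lemma, one has $\beta^t(U)=U$ viewed inside $\Op(\V_\al)$. To apply $\alpha^t$, write $U$ as an Alexandrov open via its basic-open decomposition $U = \bigcup_{x \in U} (x + \gamma)$ (which holds since $0 \in \gamma$ and $U+\gamma = U$). By the definition of $\alpha^t$, we then get
\begin{equation*}
 \alpha^t(\beta^t(U)) \;=\; \alpha^t(U) \;=\; \bigcup_{x \in U} x+\Int(\gamma).
\end{equation*}
By the lemma characterising $\gamma$-opens as unions of translates of $\Int(\gamma)$, this union equals $U$, so $(\alpha^t \circ \beta^t)(U) = U$.

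The verification is essentially a bookkeeping exercise rather than a real obstacle: one just has to remain attentive to the double role played by $U$, once as an object of $\Op(\V_\gamma)$ and once as an object of $\Op(\V_\al)$ (where it is rewritten in terms of the basic opens $x+\gamma$). Once this is clarified, both $\alpha^t$ and $\beta^t$ are applied by direct substitution in their defining formulas, and the lemma closes the computation. Hence $\alpha^t \circ \beta^t = \id$, which is exactly the equality $\beta \circ \alpha = \id$ of morphisms of sites.
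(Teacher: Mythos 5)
Your argument is correct, and since the paper states this as a ``Fact'' with no proof attached, the blind unpacking you give is exactly the immediate verification the authors leave to the reader: $\beta^t$ is the inclusion $\Op(\V_\gamma)\hookrightarrow\Op(\V_\al)$, and $\alpha^t$ restricted to a $\gamma$-open $U$ returns $\bigcup_{x\in U}x+\Int(\gamma)=U$ by the first lemma of Section~2, so $\alpha^t\circ\beta^t=\id$ and hence $\beta\circ\alpha=\id$.
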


The morphism of sites $\alpha$ and $\beta$ provide the following adjunctions
\begin{align*}
\xymatrix{
\alpha^{-1} \colon \Mod(\cor_{\V_\al}) \ar@<.4ex>[r]& \Mod(\cor_{\V_\gamma}) \ar@<.4ex>[l] \colon \alpha_\ast
,}
\\
\xymatrix{
 \beta^{-1} \colon \Mod(\cor_{\V_\gamma}) \ar@<.4ex>[r]& \Mod(\cor_{\V_\al})  \ar@<.4ex>[l] \colon \beta_\ast.}
\end{align*}

We define the functor
\begin{equation*}
\alpha^\dagger \colon \Fun(\Op(\V_\al)^{\textnormal{op}},\Mod(\cor)) \to \Fun(\Op(\V_\gamma)^{\textnormal{op}},\Mod(\cor)), \quad F \mapsto \alpha^\dagger F
\end{equation*}
where
\begin{equation*}
\textnormal{for every $U \in \Op(\V_\gamma)$}, \; \alpha^\dagger F(U)= \underset{U \subset \alpha^t(V)}{\colim} F(V).
\end{equation*}
Recall that by definition $\alpha^{-1}F$ is the sheafification of $\alpha^\dagger F$.

\begin{proposition}\label{prop:adjunctpro}
\begin{enumerate}[(i)]
\item There are canonical isomorphisms of functors $\alpha^{-1} \simeq \alpha^\dagger \simeq \beta_\ast$,
\item the functor $\alpha_\ast$ is fully faithful,
\item the functor $\beta^{-1}$ is fully faithful.
\end{enumerate}
\end{proposition}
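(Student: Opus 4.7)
My plan is to establish (i) by a direct colimit computation on $\gamma$-opens and then deduce (ii) and (iii) essentially formally from (i) combined with the identity $\beta \circ \alpha = \id$ of Fact \ref{fact:essential}.

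For (i), the key geometric observation is that $\alpha^t V = \Int(V)$ for every Alexandrov open $V$, where $\Int$ denotes the interior in the usual topology of $\V$. The inclusion $\alpha^t V = V + \Int(\gamma) \subset \Int(V)$ is immediate, since $V + \Int(\gamma)$ is open and contained in $V$ by $\gamma$-invariance. For the reverse inclusion, fix any $v_0 \in \Int(\gamma)$; for $y \in \Int(V)$ and $\epsilon > 0$ small enough, $y - \epsilon v_0 \in V$, hence $y = (y - \epsilon v_0) + \epsilon v_0 \in V + \Int(\gamma)$. Consequently, for $U \in \Op(\V_\gamma)$ the condition $U \subset \alpha^t V$ is equivalent to $U \subset V$ (since $U$ is already usual-open), and the smallest Alexandrov open $V$ satisfying this is $V = U$ itself. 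The filtered colimit defining $\alpha^\dagger F(U)$ thus collapses to $F(U)$, which coincides with $\beta_* F(U)$ since $\beta^t$ is the identity on $\Op(\V_\gamma)$. This produces a natural isomorphism $\alpha^\dagger F \simeq \beta_* F$, and since $\beta_* F$ is a sheaf, $\alpha^\dagger F$ is already a sheaf, whence $\alpha^{-1} F = \alpha^\dagger F \simeq \beta_* F$.

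For (ii) and (iii), I appeal to the criteria that $\alpha_*$ (resp.\ $\beta^{-1}$) is fully faithful if and only if the counit $\alpha^{-1} \alpha_* \to \id$ (resp.\ the unit $\id \to \beta_* \beta^{-1}$) is an isomorphism. Using (i), we get $\alpha^{-1} \alpha_* \simeq \beta_* \alpha_* = (\beta \circ \alpha)_* = \id$ by strict functoriality of direct images together with Fact \ref{fact:essential}; similarly $\beta_* \beta^{-1} \simeq \alpha^{-1} \beta^{-1} \simeq (\beta \circ \alpha)^{-1} = \id$ by functoriality of inverse images. It then suffices to check that these canonical isomorphisms really coincide with the counit and unit. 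For the counit at $U \in \Op(\V_\gamma)$, unwinding the colimit representation $\alpha^{-1} \alpha_* G(U) = \alpha^\dagger \alpha_* G(U) = \colim_{U \subset \alpha^t V} G(\alpha^t V)$ shows that $V = U$ is the initial component, on which the restriction map to $G(U)$ is the identity; since $V=U$ is terminal in the filtered indexing, the counit reduces to the identity on $G(U)$. An analogous unwinding handles the unit in (iii).

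The main technical obstacle I anticipate is precisely this last verification—that the abstract functorial isomorphisms $\beta_* \alpha_* = \id$ and $\alpha^{-1} \beta^{-1} \simeq \id$ are in fact the counit and unit of the respective adjunctions, which is especially delicate for (iii) because $\beta^{-1}$ is defined via sheafification. The issue is dissolved by the key observation of (i): the presheaf $\alpha^\dagger G$ is automatically a sheaf for any $G \in \Mod(\cor_{\V_\al})$, so applied to $G = \alpha_* H$ (for (ii)) or $G = \beta^{-1} F$ (for (iii)), the sheafification step is trivial and the straightforward presheaf-level identifications of the unit and counit carry through unchanged to the sheaf category.
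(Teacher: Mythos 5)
Your part (i) is correct and follows the paper's computation, with the additional (correct and clarifying) observation that $\alpha^t V = \Int(V)$. Your part (ii) is also correct, though you take a different route: you compute the counit $\alpha^{-1}\alpha_\ast \to \id$ directly via the colimit formula, whereas the paper proves injectivity from $\beta_\ast\alpha_\ast\simeq\id$ and then surjectivity by explicitly building a preimage on the basic opens $x+\Int(\gamma)$. Both approaches are legitimate, and yours is arguably cleaner since it reuses the machinery of (i).

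The problem is in (iii). You correctly identify that the delicate step is verifying that the abstract isomorphism $\alpha^{-1}\beta^{-1}\simeq(\beta\circ\alpha)^{-1}=\id$ is the unit of $\beta^{-1}\dashv\beta_\ast$ (equivalently, that the unit is an isomorphism). But your claimed resolution is aimed at the wrong sheafification. The observation from (i) that $\alpha^\dagger G$ is already a sheaf tells you that the functor $\beta_\ast\simeq\alpha^{-1}$ requires no sheafification when applied to $G=\beta^{-1}F$; it says nothing about the sheafification buried inside $\beta^{-1}F=(\beta^\dagger F)^\sharp$ itself. Unwinding the unit on a $\gamma$-open $U$ gives $F(U)\to\beta^\dagger F(U)\to(\beta^\dagger F)^\sharp(U)$, where the first arrow is the identity as you say, but the second is the sheafification map on $U$, and it is not automatic that this is an isomorphism — sheafification can change sections, and $\beta^\dagger F$ need not be a sheaf (its values on non-$\gamma$ Alexandrov opens such as $x+\gamma$ are stalk-like colimits, not restrictions of $F$). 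So the ``analogous unwinding'' is not actually analogous, and this step is a genuine gap.

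The paper avoids exactly this difficulty by not computing the unit at all: it cites \cite[Exercise 1.14]{KS2006}, which contains the adjoint-triple principle that in a chain $L\dashv M\dashv R$, the left adjoint $L$ is fully faithful if and only if the right adjoint $R$ is. Applied to $\beta^{-1}\dashv\alpha^{-1}\dashv\alpha_\ast$, your part (ii) ($\alpha_\ast$ fully faithful) then immediately yields (iii). Replacing the last paragraph of your argument with an appeal to this general fact would close the gap without disturbing the rest of the proof.
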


\begin{proof}
\noindent(i)
Let $F \in \Mod(\cor_{\V_\al})$. Then,
\begin{align*}
\alpha^\dagger F(U)= \underset{U \subset \alpha^t(V)}{\colim} F(V)=F(U)=\beta_\ast F(U).
\end{align*}
Hence, $\alpha^\dagger \simeq \beta_\ast$. Since $\alpha^\dagger F$ is a sheaf, it follows that $\alpha^{-1} \simeq \alpha^\dagger$.\\

\noindent(ii)  Let $F, \; G \in \Mod(\cor_{\V_\gamma})$. The isomorphism of functors $\beta_\ast \alpha_\ast \simeq \id$ implies that the morphism $\Hom_{\cor_{\V_\gamma}}(F,G) \stackrel{\alpha_\ast}{\longrightarrow}  \Hom_{\cor_{\V_\al}}(\alpha_\ast F,\alpha_\ast G)$ is injective.
Let $\phi \in \Hom_{\cor_{\V_\al}}(\alpha_\ast F,\alpha_\ast G)$. Set $\psi_{x + \Int(\gamma)}:= \phi_{x + \gamma}$. Since $\lbrace x + \Int(\gamma) \rbrace_{x \in \V_\gamma}$ is a basis of $\V_\gamma$, the family $(\psi_{x + \Int(\gamma)})_{x \in \V_\gamma}$ defines a morphism of sheaves $\psi \colon F \to G$ and  $\alpha_\ast \psi =\phi$. This proves that $\alpha_\ast$ is fully faithful.\\

\noindent(iii) This follows from \cite[Exercise 1.14]{KS2006}.
\end{proof}
We have the following sequence of adjunctions $\beta^{-1} \dashv \beta_\ast \simeq \alpha^{-1} \dashv \alpha_\ast$.

\begin{example}
The functors $\alpha_\ast$ and $\beta^{-1}$ are different as the following example shows. We set $\V=\R$ and $\gamma=]-\infty, 0]$. We consider the $\gamma$-closed set $[t, +\infty[$ with $t \in \R$ and the sheaf $\cor_{[t, +\infty[}$ associated with it. Consider the sheaves
\begin{equation*}
\beta^{-1}\cor_{[t, +\infty[} \quad \textnormal{and} \quad \alpha_\ast \cor_{[t, +\infty[}.
\end{equation*}

We compute the stalk at $t$ of these two sheaves. For the first one, observe that  the continuous map $\beta \colon\V_\al \longrightarrow \V_\gamma$ is the identity on the elements of $\V$. Therefore, we have $(\beta^{-1}\cor_{[t, +\infty[})_t \simeq (\cor_{[t, +\infty[})_t \simeq \cor$. For the second one,
\begin{align*}
(\alpha_\ast\cor_{[t, +\infty[})_t & \simeq \alpha_\ast\cor_{[t, +\infty[}(t+\gamma)\\
                                   & \simeq \cor_{[t, +\infty[}(]-\infty,t[)\\
                                   &\simeq 0.                                   
\end{align*}
\end{example}

\subsection{Compatibilities of operations}
In this subsection, we study the compatibility between operations for sheaves in $\gamma$ and Alexandrov topologies.

Let $(\V,\gamma)$ and $(\W,\lambda)$ be two finite dimensional real vector spaces endowed with cones satisfying the hypothesis \eqref{hyp:cone}. 

\begin{lemma}
Let $f \colon \V \to \W$ be a linear map. The following statements are equivalent.
\begin{enumerate}[(i)]
\item $f(\gamma) \subset \lambda$,
\item $f \colon \V_\gamma \to \W_\lambda$ is continuous,
\item $f \colon \V_{\al(\gamma)} \to \W_{\al(\lambda)}$ is continuous.
\end{enumerate}
\end{lemma}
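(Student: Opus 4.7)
The plan is to translate each of the three conditions into a concrete algebraic condition on $f$. A key preliminary observation is that the preorder $\leq_\gamma$ admits the membership description
\[
x \leq_\gamma y \iff x - y \in \gamma,
\]
where the forward direction uses $0 \in \gamma$ (so $x = x + 0 \in y + \gamma$) and the converse uses $\gamma + \gamma \subset \gamma$, which holds because $\gamma$ is a convex cone. The analogous statement holds for $\leq_\lambda$. Under this reformulation, ``$f$ is order preserving from $(\V, \leq_\gamma)$ to $(\W, \leq_\lambda)$'' amounts to ``$v \in \gamma \Rightarrow f(v) \in \lambda$'', i.e.\ exactly (i). Combining this with the proposition stated earlier, which identifies Alexandrov continuity with order preservation, gives (i)~$\Leftrightarrow$~(iii) immediately.

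For (i)~$\Rightarrow$~(ii), let $U$ be a $\lambda$-open subset of $\W$, i.e.\ an open set in the usual topology that is $\lambda$-invariant. Since $f$ is linear between finite-dimensional real vector spaces, it is continuous for the usual topologies, so $f^{-1}(U)$ is open in $\V$. It is also $\gamma$-invariant, because
\[
f\bigl(f^{-1}(U) + \gamma\bigr) \subset U + f(\gamma) \subset U + \lambda \subset U.
\]
Thus $f^{-1}(U)$ is $\gamma$-open, and $f \colon \V_\gamma \to \W_\lambda$ is continuous.

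The delicate direction is (ii)~$\Rightarrow$~(i). My plan is to probe $f$ with the test set $\lambda^a$, which I first claim is $\lambda$-closed. Indeed, $\lambda^a$ is closed in the usual topology (since $\lambda$ is), and its complement is $\lambda$-invariant: if $x + g \in \lambda^a$ for some $g \in \lambda$, then $-(x+g) \in \lambda$, so $-x = g + \bigl(-(x+g)\bigr) \in \lambda + \lambda \subset \lambda$, whence $x \in \lambda^a$ already. Continuity of $f$ then forces $f^{-1}(\lambda^a)$ to be $\gamma$-closed, equivalently closed in the usual topology and stable under $\gamma^a$; that is, $f^{-1}(\lambda^a) + \gamma^a \subset f^{-1}(\lambda^a)$. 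Since $0 \in f^{-1}(\lambda^a)$, this gives $\gamma^a \subset f^{-1}(\lambda^a)$, i.e.\ $f(\gamma^a) \subset \lambda^a$, which is (i) after negation. The main obstacle lies precisely in this last step: the $\gamma$-topology is much coarser than the usual one, so (ii) gives information only for carefully chosen probe sets; $\lambda^a$ works because, once its $\gamma$-closed preimage is known to contain $0$, stability under $\gamma^a$ automatically forces it to swallow the whole cone.
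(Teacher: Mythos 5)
Your proof is correct; the main divergence from the paper is in the direction (ii)$\Rightarrow$(i). The paper runs a local, metric argument: continuity at $0$ yields $f\bigl(B(0;\eta)+\gamma\bigr)\subset B(0;\varepsilon)+\lambda$ for every $\varepsilon>0$, hence $f(\gamma)\subset\bigcap_{\varepsilon>0}\bigl(B(0;\varepsilon)+\lambda\bigr)=\overline{\lambda}=\lambda$, using the closedness of $\lambda$ through a limiting intersection. You instead probe $f$ with a single closed test set: you verify that $\lambda^a$ is $\lambda$-closed (this is where the hypothesis that $\lambda$ is closed enters, to make $\lambda^a$ closed in the usual topology), recall that a $\gamma$-closed set $C$ satisfies $C+\gamma^a\subset C$, and conclude directly from $0\in f^{-1}(\lambda^a)$ that $\gamma^a\subset f^{-1}(\lambda^a)$, i.e.\ $f(\gamma)\subset\lambda$. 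This is a cleaner, limit-free route that also makes the role of the closedness hypothesis on $\lambda$ more transparent. You also streamline (i)$\Leftrightarrow$(iii): after rewriting $\leq_\gamma$ as $x\leq_\gamma y\iff x-y\in\gamma$, ``order-preserving'' is literally condition (i), so both implications come from the cited proposition on Alexandrov continuity in one step, whereas the paper proves (iii)$\Rightarrow$(i) by a separate probe with the Alexandrov-open set $\lambda$ and the fact that $\gamma=\mathscr{D}(0)$ is the smallest open set containing $0$. For (i)$\Rightarrow$(ii) both treatments are essentially the same (linearity gives usual continuity, and $\gamma$-invariance of the preimage follows from $f(\gamma)\subset\lambda$); you work with an arbitrary $\lambda$-open $U$ while the paper works with basis elements $y+\Int(\lambda)$.
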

 
\begin{proof}
\noindent (i)$\Rightarrow$(ii) Let $y \in \W$. Let us show that $f^{-1}(y+\Int(\lambda))$ is a $\gamma$-open. As $\V$ and $\W$ are finite dimensional, $f$ is continuous for the usual topology. Hence $f^{-1}(y+\Int(\lambda))$ is open. The inclusion $f^{-1}(y+\Int(\lambda)) \subset f^{-1}(y+\Int(\lambda)) + \gamma$ is clear. Let us show the reverse inclusion.
Let $x \in f^{-1}(y+\Int(\lambda)) + \gamma$. There exists $u \in f^{-1}(y+\Int(\lambda))$ and $v \in \gamma$ such that $x=u+v$. Then 
$f(x)=y+l+f(v)$ with $l \in \Int(\lambda)$ and $f(v) \in \lambda$. Since $\Int(\lambda)=\Int(\lambda)+\lambda$ it follows that $f(x) \in y+\Int(\lambda)$. Hence $f^{-1}(y+\Int(\lambda)) + \gamma=f^{-1}(y+\Int(\lambda))$. This proves that $f^{-1}(y+\Int(\lambda))$ is a $\gamma$-open.

\noindent(ii)$\Rightarrow$(i) Since $f(0)=0$ and $f$ is continuous,  for every $\varepsilon > 0$ there exists $\eta >0$ such that $f(B(0;\eta)+\gamma) \subset B(0,\varepsilon)+\lambda$. Hence, if $v \in \gamma$, $f(v) \in \overline{\lambda}=\lambda$.

\noindent (i)$\Rightarrow$(iii) The statement (i) implies that $f \colon (\V, \leq_\gamma) \to (\W, \leq_\lambda)$ is order preserving. Hence, $f \colon \V_{\al(\gamma)} \to \W_{\al(\lambda)}$ is continuous.

\noindent (iii)$\Rightarrow$(i) $\lambda$ is an open subset of $\W_{\al(\lambda)}$. As $f^{-1}(\lambda)$ is an open subset of $\V_{\al(\gamma)}$ such that $0 \in f^{-1}(\lambda)$ it follows that $\gamma \subset f^{-1}(\lambda)$. Hence $f(\gamma) \subset \lambda$.
\end{proof}
Let $f \colon \V \to \W$ be a linear map. Assume that $f(\gamma) \subset \lambda$. We denote by $f_\al \colon \V_{\al(\gamma)} \to \W_{\al(\lambda)}$ the continuous map between $\V_{\al(\gamma)}$ and $\W_{\al(\lambda)}$ whose underlying linear map is $f$.

\begin{proposition}
\begin{enumerate}[(i)]
\item Assume that $f \colon \V_\gamma \to \W_\lambda$ is continuous. Then the following diagram of morphisms of sites is commutative.
\begin{equation*}
\xymatrix{
 \V_\al \ar[r]^-{\beta} \ar[d]_-{f_\al} & \V_\gamma \ar[d]^-{f} \\
 \W_\al \ar[r]^-{\beta} & \W_\lambda
}
\end{equation*}

\item Assume that $f(\Int(\gamma)) \subset \Int(\lambda)$. Then the following diagram of morphisms of sites is commutative.
\begin{equation*}
\xymatrix{
\V_\gamma \ar[r]^-{\alpha} \ar[d]_-{f}& \V_\al \ar[d]^-{f_\al}  \\
\W_\lambda \ar[r]\ar[r]^-{\alpha} & \W_\al
}
\end{equation*}
\end{enumerate}
\end{proposition}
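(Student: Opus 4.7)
Both statements are equalities of compositions of morphisms of sites, so it suffices to verify them at the level of the underlying functors $^t$ between categories of open sets. In each diagram the preceding lemma ensures that $f_\al$ and $f$ are simultaneously continuous: in (i) the hypothesis of continuity of $f\colon\V_\gamma\to\W_\lambda$ yields $f(\gamma)\subset\lambda$ directly, while in (ii) the assumption $f(\Int(\gamma))\subset\Int(\lambda)$ gives $f(\gamma)\subset\lambda$ on passing to closures, since $\overline{\Int(\gamma)}=\gamma$. Part (i) is then straightforward: the functor $\beta^{t}$ is a tautological inclusion (every $\gamma$-open $V=V+\gamma$ is a lower set for $\leq_\gamma$, and the basic opens $x+\Int(\gamma)$ are sent to themselves), so both $\beta^{t}\circ f^{t}$ and $(f_\al)^{t}\circ\beta^{t}$ send $U\in\Op(\W_\lambda)$ to the set $f^{-1}(U)$ regarded as an Alexandrov open of $\V_\al$.

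Part (ii) requires a genuine geometric argument. Unwinding $\alpha^{t}(V)=\bigcup_{v\in V}(v+\Int(\gamma))$, the commutativity of the square reduces to the identity
\begin{equation*}
\bigcup_{x\in f^{-1}(U)}(x+\Int(\gamma)) \;=\; f^{-1}\!\Bigl(\bigcup_{y\in U}(y+\Int(\lambda))\Bigr)
\end{equation*}
of $\gamma$-opens of $\V$, for every Alexandrov open $U\subset\W_\al$. The inclusion $\subset$ follows at once from $f(x)+f(\Int(\gamma))\subset f(x)+\Int(\lambda)$ whenever $f(x)\in U$.

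The reverse inclusion is the heart of the argument and the main obstacle. Given $z$ with $f(z)=y+\ell$, $y\in U$, $\ell\in\Int(\lambda)$, I will produce $g\in\Int(\gamma)$ with $z-g\in f^{-1}(U)$. Since $\Int(\gamma)$ is a nonempty open cone, it contains elements of arbitrarily small norm; continuity of the linear map $f$ and the fact that $\ell$ is an interior point of $\lambda$ for the usual topology of $\W$ then yield $\ell-f(g)\in\lambda$ for $g$ sufficiently small. From $f(z-g)=y+(\ell-f(g))$ and the lower-set property of $U$ with respect to $\leq_\lambda$ one deduces $f(z-g)\in U$. Hence $z=(z-g)+g$ belongs to $\alpha^{t}(f^{-1}(U))$, completing the verification. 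The key geometric inputs are thus the non-emptiness of $\Int(\gamma)$ and the openness of $\Int(\lambda)$ in the usual topology, both supplied by hypothesis \eqref{hyp:cone}.
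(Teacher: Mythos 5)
Your proof is correct, and it takes a genuinely different route through part~(ii) than the paper does. The paper checks the identity $\alpha^{t}\circ f_{\al}^{t}=f^{t}\circ\alpha^{t}$ on the basic opens $U=y+\lambda$ only, and the key algebraic step is the relation $\lambda+\Int(\lambda)=\Int(\lambda)$: given $z=x+g$ with $f(x)=y+l$, $l\in\lambda$, $g\in\Int(\gamma)$, one gets $f(z)=y+(l+f(g))\in y+\Int(\lambda)$, so $z\in f^{-1}(y+\Int(\lambda))$. What the paper calls the ``clear'' inclusion is then the one relying on the earlier (reader-supplied) lemma that a $\gamma$-open $V$ equals $\bigcup_{x\in V}x+\Int(\gamma)$. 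You instead verify the identity on an arbitrary Alexandrov open $U$: this reverses which inclusion is easy. Your $\subset$ is immediate from $f(\Int(\gamma))\subset\Int(\lambda)$ and the freedom to take $y=f(x)$; your $\supset$ --- picking a small $g\in\Int(\gamma)$ so that $\ell-f(g)\in\lambda$ and then invoking the lower-set property of $U$ --- is exactly an inlined proof of that same lemma about $\gamma$-opens, applied to $f^{-1}\bigl(\bigcup_{y\in U}y+\Int(\lambda)\bigr)$. So the two arguments draw on the same geometric content (the interior of a closed convex cone absorbs the cone, and $\Int(\gamma)$ contains arbitrarily small vectors), but repackaged: the paper's is shorter because it piggybacks on the cited lemma and on basic opens, while yours is self-contained and makes the topological mechanism explicit. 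Your observation in part~(i) that $\beta^{t}$ is a tautological inclusion, so both composites send $U$ to $f^{-1}(U)$, is a clean way to make precise what the paper dismisses as ``clear.'' One small remark: you should note that, for $\supset$, you may need the case $f(g_0)=0$, which is harmless since then $\ell-f(g)=\ell\in\lambda$ automatically; but your argument already covers it.
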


\begin{proof}
\noindent (i) is clear.\\
\noindent(ii) 
Let $y \in \W$. On one hand, we have
\begin{align*}
\alpha^t \circ f_\al^t(y+\lambda)&=\alpha^t(\bigcup_{ \lbrace x \in \V \vert f(x) \in y+\lambda \rbrace} x+ \gamma)\\
&=\bigcup_{ \lbrace x \in \V \vert f(x) \in y+\lambda \rbrace} x+ \Int(\gamma).
\end{align*}
On the other hand,
\begin{align*}
f^t \circ \alpha^t (y+\lambda)&=f^t(y+\Int(\lambda))\\
&=\bigcup_{ \lbrace x \in \V \vert f(x) \in y+\Int(\lambda) \rbrace} x+ \Int(\gamma).
\end{align*}
The inclusion 
\begin{align*}
\bigcup_{ \lbrace x \in \V \vert f(x) \in y+\Int(\lambda) \rbrace} \hspace{-0.5cm}x+ \Int(\gamma)\;\subset \bigcup_{ \lbrace x \in \V \vert f(x) \in y+\lambda \rbrace} x+ \Int(\gamma)
\end{align*} 
is clear. Let us prove the reverse inclusion. Let $z \in \bigcup_{ \lbrace x \in \V \vert f(x) \in y+\lambda \rbrace} x+ \Int(\gamma)$. Then $z=x+g$ with $g \in \Int(\gamma)$ and $f(z)=y+l+f(g)$ with $ l \in \lambda$. As $f(g) \in \Int(\lambda)$ then $l+f(g) \in \Int(\lambda)$. It follows that $f(z) \in y+ \Int(\lambda)$.   
\end{proof}

\begin{example} In $(ii)$ the hypothesis $f(\Int(\gamma)) \subset \Int(\lambda)$ is necessary as shown in the following example.

On $\R$, consider the cone $\gamma=\lbrace x \in \R \vert \,x \leq 0 \rbrace$ and on $\R^2$ consider the cone $\lambda=\lbrace (x,y) \in \R^2 \vert \, x \leq 0 \; \textnormal{and} \; y \leq 0 \rbrace$.
Let $f \colon \R \to \R^2$, $x \mapsto (x,0)$. Then, computing both $f^t\alpha^t(\lambda)$ and $\alpha^t f_\al^t(\lambda)$, we get

\begin{tabular}{p{7cm}p{7cm}}
{\begin{align*}
 f^t\alpha^t(\lambda) &= f^t(\Int(\lambda))\\
&= \emptyset
\end{align*}}
&
{\begin{align*}
\alpha^t f_\al^t (\lambda) &= \alpha^t(\gamma)\\
&= \Int(\gamma)\\
\end{align*}}
\end{tabular}

\noindent Note that the condition $f(\Int(\gamma)) \subset \Int(\lambda)$ is automatically satisfied when $f$ is surjective.
\end{example}

\section{Ephemeral persistent modules}
\subsection{The category of ephemeral modules}
In this section, we propose a notion of ephemeral persistent module in arbitrary dimension, generalizing the one of \cite{Chazal2016a}. For the convenience of the reader, we start by recalling the definition of a Serre subcategory and of the quotient of an abelian category by a Serre subcategory that we subsequently use. We refer the reader to \cite{Gabriel1962} and \cite[\href{https://stacks.math.columbia.edu/tag/02MN}{Tag 02MN}]{stacks-project}.

\begin{definition}
Let $\cA$ be an abelian category. A Serre subcategory $\cC$ of $\cA$ is a non-empty full subcategory of $\cA$ such that given an exact sequence 
\begin{equation*}
X \to A \to Y 
\end{equation*}
with $X$ and $Y$ in $\cC$ and $A \in \cA$ then $A \in \cC$. 

If $\cC$ is closed under isomorphism, we say that it is a strict Serre subcategory of $\cA$.
\end{definition}

\begin{lemma}
Let $\cA$ be an abelian category and $\cC$ be a Serre subcategory of $\cA$. There exists an abelian category denoted $\cA / \cC$ and an exact functor $Q \colon \cA \to \cA / \cC$ whose kernel is $\cC$ satisfying the following universal property: For any exact functor $F \colon \cA \to \cB$ such that $\cC \subset \Ker(F)$ there exists a factorization $F=H \circ Q$ for a unique exact functor $H \colon \cA / \cC \to \cB$.
\end{lemma}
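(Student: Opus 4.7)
The plan is to construct $\cA/\cC$ explicitly using the Gabriel quotient construction and then verify the universal property. First I would define the class of morphisms
\begin{equation*}
S = \{ f \in \Mor(\cA) \mid \Ker(f), \coker(f) \in \cC \}
\end{equation*}
and check that $S$ is a multiplicative system admitting a calculus of both left and right fractions. This uses the Serre property of $\cC$ in an essential way: if $f \colon A \to B$ is in $S$ and $g \colon A \to C$ is arbitrary, one forms the pushout and uses the exactness of pushouts in abelian categories together with closure of $\cC$ under subobjects, quotients, and extensions to verify the Ore condition. I would then set $\Ob(\cA/\cC) = \Ob(\cA)$ and define
\begin{equation*}
\Hom_{\cA/\cC}(X,Y) = \underset{(X',Y')}{\colim}\, \Hom_\cA(X',Y/Y')
\end{equation*}
where the colimit runs over pairs consisting of a subobject $X' \subset X$ with $X/X' \in \cC$ and a subobject $Y' \subset Y$ with $Y' \in \cC$. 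Composition is defined via the calculus of fractions, and $Q \colon \cA \to \cA/\cC$ is the identity on objects and sends a morphism to its canonical image.

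Next I would verify that $\cA/\cC$ is abelian and that $Q$ is exact. The additive structure on $\Hom$-sets is inherited from $\cA$ via the filtered colimit. Kernels and cokernels in $\cA/\cC$ are computed by taking kernels and cokernels in $\cA$ and applying $Q$; to check that $Q$ preserves short exact sequences, one uses that short exact sequences in $\cA$ give rise to short exact sequences of the corresponding Hom-colimits since $\cC$ is closed under subobjects and quotients. The identification $\Ker(Q) = \cC$ follows because an object $X \in \cA$ satisfies $Q(X) \simeq 0$ if and only if $\id_X$ becomes zero in the colimit, which happens exactly when $X \in \cC$, again using the Serre property.

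For the universal property, given an exact $F \colon \cA \to \cB$ with $\cC \subset \Ker(F)$, I would define $H \colon \cA/\cC \to \cB$ on objects by $H(X) = F(X)$. On morphisms, a class in $\Hom_{\cA/\cC}(X,Y)$ is represented by some $X' \hookrightarrow X$ and $Y \twoheadrightarrow Y/Y'$ with $X/X'$, $Y' \in \cC$; applying $F$, the hypothesis $\cC \subset \Ker(F)$ forces $F(X') \to F(X)$ and $F(Y) \to F(Y/Y')$ to be isomorphisms, so we can define $H$ of the class to be the composite $F(X) \xrightarrow{\sim} F(X') \to F(Y/Y') \xleftarrow{\sim} F(Y)$. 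Independence of the representative follows from exactness of $F$ and the definition of the colimit; exactness of $H$ is then automatic since $Q$ is exact and essentially surjective on short exact sequences. Uniqueness of $H$ is clear from $Q$ being the identity on objects and the fact that every morphism in $\cA/\cC$ is a fraction of morphisms from $\cA$.

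The main obstacle is checking the calculus of fractions and verifying that the candidate abelian structure is well-defined: concretely, that composition of fraction-representatives is well-defined up to the equivalence implicit in the colimit, and that kernels/cokernels computed object-wise via $Q$ satisfy the axioms of an abelian category. These are the points where one genuinely needs the full strength of $\cC$ being a Serre subcategory (closed under subobjects, quotients, and extensions) rather than merely a thick or weaker subcategory; since the statement is classical and we may quote \cite{Gabriel1962}, I would simply sketch these verifications and refer to the standard references for the diagrammatic details.
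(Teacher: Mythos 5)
The paper does not give a proof of this lemma but instead refers the reader to \cite{Gabriel1962} and the Stacks Project; your sketch reproduces precisely the standard Gabriel quotient construction found in those references (Ore localization at morphisms with kernel and cokernel in $\cC$, Hom-sets as filtered colimits over subobjects with $\cC$-co/kernels, and the induced functor on the quotient). Your proposal is therefore correct and takes the same approach as the cited sources on which the paper relies.
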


\begin{proposition}[{\cite[Ch.2 \S 2 Proposition 5]{Gabriel1962}}]\label{prop:comploc}
Let $L \colon \cA \to \cB$ be an exact functor between abelian categories. Assume that $L$ has a fully faithful right adjoint $R$. Then $\Ker L$ is a Serre subcategory of $\cA$ and $L$ induces an equivalence between $\cA/ \Ker(L)$ and $\cB$.
\end{proposition}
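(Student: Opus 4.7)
The plan is to check the Serre subcategory property, then extract the factorization $L = H \circ Q$ from the preceding lemma, and finally prove that the induced functor $H \colon \cA/\Ker L \to \cB$ is both essentially surjective and fully faithful. The fully faithful right adjoint $R$ is precisely what makes each of these steps work.

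First, if $X \to A \to Y$ is exact in $\cA$ with $X, Y \in \Ker L$, then applying the exact functor $L$ gives the exact sequence $0 \to L(A) \to 0$, so $L(A) = 0$ and $A \in \Ker L$. Thus $\Ker L$ is a Serre subcategory, and the preceding lemma furnishes a unique exact functor $H \colon \cA/\Ker L \to \cB$ with $L = H \circ Q$. Essential surjectivity of $H$ is immediate: the fully faithfulness of $R$ is equivalent to the counit $\varepsilon \colon LR \to \id_\cB$ being a natural isomorphism, hence $H(Q(R(B))) = L(R(B)) \simeq B$ for every $B \in \cB$.

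For full faithfulness, I first establish that the unit $\eta_A \colon A \to RL(A)$ becomes invertible after applying $Q$: the triangle identity gives $\varepsilon_{L(A)} \circ L(\eta_A) = \id_{L(A)}$, and since $\varepsilon$ is an isomorphism, so is $L(\eta_A)$. Exactness of $L$ then places the kernel and cokernel of $\eta_A$ inside $\Ker L$, so $Q(\eta_A)$ is invertible in $\cA/\Ker L$. Combining this with the adjunction isomorphism $\Hom_\cB(L(A), L(A')) \simeq \Hom_\cA(A, RL(A'))$, full faithfulness of $H$ reduces to showing that the canonical map
\[ \Hom_\cA(A, RL(A')) \longrightarrow \Hom_{\cA/\Ker L}(Q(A), Q(RL(A'))) \]
is a bijection for every $A, A' \in \cA$.

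The main obstacle is this last bijection, which I treat via the calculus of fractions description of the Gabriel quotient: a class in $\Hom_{\cA/\Ker L}(Q(A), Q(RL(A')))$ is represented by a roof $A \hookleftarrow A_0 \to RL(A')/N$ with $A/A_0, N \in \Ker L$, and I must show every such roof descends uniquely from an honest morphism $A \to RL(A')$. The key input is that $RL(A')$ is right-closed with respect to $\Ker L$: for $N \in \Ker L$ the adjunction directly gives $\Hom_\cA(N, RL(A')) \simeq \Hom_\cB(L(N), L(A')) = 0$, and the parallel vanishing of $\Ext^1_\cA(N, RL(A'))$ is extracted from the long exact sequence together with exactness of $L$ and the already-established fact that $\eta$ is an isomorphism modulo $\Ker L$. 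Once these vanishings collapse the roof to an actual morphism, uniqueness is automatic, so $H$ is fully faithful and the equivalence $\cA/\Ker L \simeq \cB$ follows.
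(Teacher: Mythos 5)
The paper gives no proof of this proposition---it simply cites Gabriel and moves on---so there is no in-paper argument to compare against. Your overall structure (Serre property, factorization through the universal quotient, essential surjectivity from $\varepsilon$ being invertible, and reduction of full faithfulness of $H$ to showing $RL(A')$ is $\Ker L$-closed) is the standard route and is sound.

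The one genuine gap is in your justification of $\Ext^1_\cA(N, RL(A')) = 0$ for $N \in \Ker L$. You cite "the already-established fact that $\eta$ is an isomorphism modulo $\Ker L$," but $Q(\eta_{RL(A')})$ being invertible only lets you split an extension in $\cA/\Ker L$, not in $\cA$, which is what this $\Ext$-group sees. The stronger input you actually need is that $\eta_{RL(A')}$ is a genuine isomorphism in $\cA$; this comes from the \emph{other} triangle identity $R(\varepsilon_B)\circ\eta_{R(B)}=\id_{R(B)}$: since $\varepsilon$ is invertible, so is $R(\varepsilon_B)$, hence $\eta_{R(B)}$ is its inverse. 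With that in hand, given $0 \to RL(A') \xrightarrow{i} E \to N \to 0$ with $N \in \Ker L$, exactness of $L$ makes $L(i)$ an isomorphism, and $\eta_{RL(A')}^{-1}\circ RL(i)^{-1}\circ \eta_E$ is a retraction of $i$, so the sequence splits. Alternatively, and more economically, you can skip $\Ext^1$ entirely: for $A_0 \subset A$ with $A/A_0 \in \Ker L$, the adjunction identifies the restriction map $\Hom_\cA(A, RL(A')) \to \Hom_\cA(A_0, RL(A'))$ with precomposition along $L(A_0)\to L(A)$ in $\cB$, which is an isomorphism because $L$ is exact and kills $A/A_0$; so the restriction is already bijective, and the colimit describing $\Hom_{\cA/\Ker L}(Q(A), Q(RL(A')))$ collapses to $\Hom_\cA(A, RL(A'))$ without any $\Ext$ computation.
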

We now introduce our notion of ephemeral module.
\begin{definition}\label{def:eph}
An object $G \in \Mod(\cor_{\V_\al})$ is ephemeral if and only if $\beta_\ast G \simeq 0$. We denote by $\Eph(\cor_{\V_\al})$ the full subcategory of $\Mod(\cor_{\V_\al})$ spanned by ephemeral modules.
\end{definition}

In other words, an object $G \in \Mod(\cor_{\V_\al})$ is ephemeral if and only if for every open subset $U$ of the usual topology of $\V$, $G(U+\gamma)=0$.

\begin{lemma}\label{lem:Serre}
The full subcategory $\Eph(\cor_{\V_\al})$ of $\Mod(\cor_{\V_\al})$ is a strict Serre subcategory, stable by limits and colimits.
\end{lemma}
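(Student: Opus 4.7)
The plan is to leverage the isomorphism $\beta_\ast \simeq \alpha^{-1}$ established in Proposition \ref{prop:adjunctpro}(i), which realizes the test functor $\beta_\ast$ as simultaneously a left adjoint (to $\alpha_\ast$) and a right adjoint (to $\beta^{-1}$). This dual adjointness immediately yields all the properties we need of $\beta_\ast$: it preserves both limits and colimits between the abelian categories $\Mod(\cor_{\V_\al})$ and $\Mod(\cor_{\V_\gamma})$, and in particular it is exact.

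First I would check strictness. If $G \in \Eph(\cor_{\V_\al})$ and $G' \simeq G$ in $\Mod(\cor_{\V_\al})$, then $\beta_\ast G' \simeq \beta_\ast G \simeq 0$, so $G' \in \Eph(\cor_{\V_\al})$. Next, to verify the Serre condition, suppose we are given an exact sequence $X \to A \to Y$ in $\Mod(\cor_{\V_\al})$ with $X, Y \in \Eph(\cor_{\V_\al})$. Applying the exact functor $\beta_\ast$ yields an exact sequence
\begin{equation*}
\beta_\ast X \longrightarrow \beta_\ast A \longrightarrow \beta_\ast Y
\end{equation*}
in $\Mod(\cor_{\V_\gamma})$. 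Since $\beta_\ast X \simeq 0$ and $\beta_\ast Y \simeq 0$, exactness forces $\beta_\ast A \simeq 0$, so $A \in \Eph(\cor_{\V_\al})$. This gives the Serre subcategory property, and non-emptiness is clear since $0 \in \Eph(\cor_{\V_\al})$.

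For stability under limits and colimits, let $(G_i)_{i \in I}$ be a diagram in $\Eph(\cor_{\V_\al})$. Because $\beta_\ast$ preserves limits (as a right adjoint), we have
\begin{equation*}
\beta_\ast\bigl(\lim_i G_i\bigr) \simeq \lim_i \beta_\ast G_i \simeq \lim_i 0 \simeq 0,
\end{equation*}
and because $\beta_\ast \simeq \alpha^{-1}$ preserves colimits (as a left adjoint), a symmetric computation gives $\beta_\ast(\colim_i G_i) \simeq 0$. Thus both the limit and the colimit of the diagram lie in $\Eph(\cor_{\V_\al})$.

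There is no real obstacle in this argument; everything reduces to the double adjointness of $\beta_\ast$. The only thing worth being careful about is to cite Proposition \ref{prop:adjunctpro}(i) for the identification $\beta_\ast \simeq \alpha^{-1}$, which is what supplies the exactness of $\beta_\ast$ and allows both directions of the continuity argument.
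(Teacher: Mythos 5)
Your proof is correct and follows essentially the same route as the paper: both arguments hinge on the identification $\beta_\ast \simeq \alpha^{-1}$ from Proposition \ref{prop:adjunctpro}, using the double adjointness to get exactness of the test functor (hence the Serre condition) and preservation of limits and colimits. You spell out the strictness, non-emptiness and exact-sequence steps a bit more explicitly than the paper does, but the underlying idea is identical.
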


\begin{proof}
Since $\beta_\ast\simeq \alpha^{-1}$, $\Eph(\cor_{\V_\al}) \simeq \ker(\alpha^{-1})$. Since $\alpha^{-1}$ is exact, $\Eph$ is a Serre subcategory of $\Mod(\cor_{\V_\al})$. Since $\beta_\ast$ commutes with limits (it is a right adjoint) and $\alpha^{-1}$ commutes with colimits (it is a left adjoint), $\Eph(\cor_{\V_\al})$ has limits and colimits.
\end{proof}

\begin{theorem}\label{thm:GammaSh_SerreQ}
The functor $\alpha^{-1} \colon \Mod(\cor_{\V_\al}) \to \Mod(\cor_{\V_\gamma})  $ induces an equivalence of categories between $\Mod(\cor_{\V_\al}) / \Eph(\cor_{\V_\al})$ and $\Mod(\cor_{\V_\gamma})$.
\end{theorem}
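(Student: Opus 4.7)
The plan is to apply Proposition \ref{prop:comploc} directly to the functor $L=\alpha^{-1}$ with right adjoint $R=\alpha_\ast$. All the ingredients needed to invoke that proposition have already been established earlier in the excerpt, so the proof reduces to checking these ingredients and then identifying the kernel with $\Eph(\cor_{\V_\al})$.

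First I would note that $\alpha^{-1} \colon \Mod(\cor_{\V_\al}) \to \Mod(\cor_{\V_\gamma})$ is exact: as an inverse image functor associated with a morphism of sites it is a left adjoint, hence right exact; left exactness comes from the fact that $\alpha^t$ preserves finite limits (which is precisely what was verified when showing $\alpha$ is a morphism of sites), or alternatively from Proposition \ref{prop:adjunctpro}(i) which gives $\alpha^{-1} \simeq \beta_\ast$, so $\alpha^{-1}$ admits the further left adjoint $\beta^{-1}$ and therefore also preserves finite limits. Next, Proposition \ref{prop:adjunctpro}(ii) tells us that the right adjoint $\alpha_\ast$ is fully faithful.

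Then I would identify $\Ker(\alpha^{-1})$ with $\Eph(\cor_{\V_\al})$. By Proposition \ref{prop:adjunctpro}(i), there is a canonical isomorphism $\alpha^{-1} \simeq \beta_\ast$, so for $G \in \Mod(\cor_{\V_\al})$ one has $\alpha^{-1}G \simeq 0$ if and only if $\beta_\ast G \simeq 0$, which by Definition \ref{def:eph} is exactly the condition that $G$ be ephemeral. Hence $\Ker(\alpha^{-1}) = \Eph(\cor_{\V_\al})$ (this also recovers Lemma \ref{lem:Serre}, confirming that the kernel is a strict Serre subcategory, as required).

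Finally, I would invoke Proposition \ref{prop:comploc}: the functor $\alpha^{-1}$ is an exact functor with fully faithful right adjoint $\alpha_\ast$, so it induces an equivalence of abelian categories
\begin{equation*}
\Mod(\cor_{\V_\al})/\Ker(\alpha^{-1}) \xrightarrow{\;\sim\;} \Mod(\cor_{\V_\gamma}).
\end{equation*}
Rewriting the left-hand side as $\Mod(\cor_{\V_\al})/\Eph(\cor_{\V_\al})$ via the identification above yields the claim.

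There is no real obstacle here: every nontrivial input (exactness from the morphism of sites, full faithfulness of $\alpha_\ast$, the identification $\alpha^{-1}\simeq\beta_\ast$, and the abstract Serre quotient result) has already been proved in the preceding subsections. The only thing to be mildly careful about is ensuring that ``fully faithful right adjoint'' is set up with the correct variance; given the adjunctions displayed just before Proposition \ref{prop:adjunctpro}, this is immediate and the whole argument is essentially a bookkeeping assembly of previous lemmas.
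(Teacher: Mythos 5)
Your proof is correct and follows exactly the same route as the paper: the paper's proof is the one-line statement that the theorem "is a direct consequence of Propositions \ref{prop:adjunctpro} and \ref{prop:comploc}," and your argument is precisely the unpacking of that — apply Proposition \ref{prop:comploc} to $L=\alpha^{-1}$ with fully faithful right adjoint $\alpha_\ast$, and use $\alpha^{-1}\simeq\beta_\ast$ to identify $\Ker(\alpha^{-1})$ with $\Eph(\cor_{\V_\al})$.
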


\begin{proof}
This is a direct consequences of Proposition \ref{prop:adjunctpro} and \ref{prop:comploc}.
\end{proof}

\subsection{Ephemeral modules on $\R$}

Ephemeral modules on $(\R, \leq)$ where introduced in \cite{Chazal2016a} and the category of observable modules on $\R$ was introduced and studied in \cite{Chazal2016b}. We show that our notion of ephemeral module generalize to arbitrary dimension the one of \cite{Chazal2016a} and \cite{Chazal2016b}.

The convention of \cite{Chazal2016b} are equivalent in our setting to the choice of the proper closed convex cone $\gamma=[0,+\infty[$.

\begin{lemma} \label{lem:compeph} Let $F \in \Mod(\cor_{\R_{\al(\gamma)}})$. The following are equivalent,
\begin{enumerate}[(i)]
\item $F \in \Eph(\cor_{\R_{\al(\gamma)}})$,
\item  the restriction morphism $\rho_{t,s} \colon F(s+\gamma) \to F(t+\gamma)$ is null whenever $s < t$.
\end{enumerate}
\end{lemma}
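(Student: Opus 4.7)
The plan is to unfold what ephemerality means concretely in dimension one. With $\gamma=[0,+\infty[$, the $\gamma$-opens of $\R$ are exactly $\emptyset$, $\R$, and the half-lines $]a,+\infty[$ for $a\in\R$. Since $\beta^{t}$ acts as the identity on objects and $\beta_\ast F(V)=F(V)$ when $V$ is a $\gamma$-open regarded as an Alexandrov-open (every $\gamma$-invariant usual open is a lower set for $\leq_\gamma$), the condition $F\in\Eph(\cor_{\R_{\al(\gamma)}})$ is equivalent to $F(]a,+\infty[)=0$ for every $a\in\R$ and $F(\R)=0$. This reformulation is the common ground on which both implications will be proved.

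For (i)$\Rightarrow$(ii) I would use a one-line sandwich argument. Given $s<t$, pick any $r$ with $s<r<t$ (here one uses the density of $\R$, i.e.\ that the boundary of $\gamma$ has dimension zero). Then $]r,+\infty[$ is a $\gamma$-open, and the ephemerality hypothesis gives $F(]r,+\infty[)=0$. The chain of inclusions of Alexandrov-opens
\[
[t,+\infty[ \;\subset\; ]r,+\infty[ \;\subset\; [s,+\infty[
\]
forces $\rho_{t,s}$ to factor as $F(s+\gamma)\to F(]r,+\infty[)\to F(t+\gamma)$, and the middle module is zero.

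For (ii)$\Rightarrow$(i) I would exploit the sheaf condition on the totally ordered cover
\[
]a,+\infty[ \;=\; \bigcup_{z>a}\,[z,+\infty[,
\]
whose pairwise intersections $[z,+\infty[\cap [z',+\infty[=[\max(z,z'),+\infty[$ belong to the cover. The sheaf axiom then identifies $F(]a,+\infty[)$ with the set of matching families $(f_z)_{z>a}$ such that $\rho_{z',z}(f_z)=f_{z'}$ for all $z\leq z'$. Under hypothesis (ii), for every $z'>a$ one can pick some $z\in\,]a,z'[$ and obtain $f_{z'}=\rho_{z',z}(f_z)=0$; hence the whole family is zero and $F(]a,+\infty[)=0$. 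The same argument applied to the cover $\R=\bigcup_{z\in\R}[z,+\infty[$ gives $F(\R)=0$, and $F(\emptyset)=0$ is automatic. This yields $\beta_\ast F\simeq 0$.

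I do not expect a serious obstacle: both implications rely only on picking an intermediate real number between $s$ and $t$ (or between $a$ and any $z'>a$) and plugging it into either the factorization of restriction maps or the equaliser description of $F$ on a chain-cover. The only subtle point to state cleanly is the identification of $\gamma$-opens of $\R$ and the fact that $\beta_\ast$ simply evaluates $F$ on the Alexandrov-open underlying such a $\gamma$-open; everything else is formal sheaf theory on the real line.
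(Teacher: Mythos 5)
Your proof is correct and follows essentially the same route as the paper: for (i)$\Rightarrow$(ii) both pick an intermediate point $r\in\,]s,t[$ so that $\rho_{t,s}$ factors through $F(]r,+\infty[)=0$, and for (ii)$\Rightarrow$(i) both use the Alexandrov sheaf condition on the chain cover $]a,+\infty[=\bigcup_{z>a}[z,+\infty[$ to exhibit $F(]a,+\infty[)$ as a limit whose structure maps all vanish. The paper phrases the second step via the limiting isomorphism being the zero map rather than via matching families, but the content is identical.
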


\begin{proof}
\noindent (i)$\Rightarrow$(ii). There exists $u \in \R$ such that $s<u<t$ and by hypothesis $F(u+\Int(\gamma))\simeq (0)$. Hence, we have the following commutative diagram
\begin{equation*}
\xymatrix{ F(s+\gamma) \ar[dr]_-{\rho_{u+\Int(\gamma),s}} \ar[rr]^{\rho_{t,s}}&& F(t + \gamma)\\
& (0). \ar[ur]_-{\rho_{t,u+\Int(\gamma)}}&
}
\end{equation*}
This implies that $\rho_{t,s}=0$.\\

\noindent (ii)$\Rightarrow$(i). As the family $(x+\Int(\gamma))_{x \in \R}$ is a basis of the $\gamma$-topology on $\R$, it is sufficient to show that for every $x \in \R$, $F(x+\Int(\gamma))=(0)$. Let $x \in \R$. Since $F$ is a sheaf for the Alexandrov topology, we have the following isomorphism
\begin{equation}\label{map:isonull}
\lim_{u+\gamma \subset x+\Int(\gamma)} \rho_{x+ \Int(\gamma),u} \colon F(x+\Int(\gamma)) \stackrel{\sim}{\to} \lim_{u+\gamma \subset x+\Int(\gamma)}F(u+\gamma).
\end{equation}
Since $u+\gamma \subset x+ \Int(\gamma)$, $x<u$. Then, there exists $t \in \R$ such that $x<t<u$. Hence, $\rho_{x+ \Int(\gamma),u}=\rho_{t,u} \circ \rho_{x+ \Int(\gamma),t}=0$. It follows that the isomorphism \eqref{map:isonull} is the zero map. This implies that $F(x+\Int(\gamma))\simeq 0$.
\end{proof}

We refer the reader to \cite[Definition 2.3]{Chazal2016b} for the definition of the observable category denoted $\textnormal{\textbf{Ob}}$ and recall the following result by the same authors

\begin{theorem}[{\cite[Corollary 2.13]{Chazal2016b}}]\label{thm:Obscat}
There is the following equivalence of categories $\textnormal{\textbf{Ob}} \simeq \Mod(\cor_{\R_{\al(\gamma)}})/\Eph(\cor_{\R_{\al(\gamma)}})$.
\end{theorem}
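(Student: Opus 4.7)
The plan is to reduce the statement to the already-cited Corollary 2.13 of \cite{Chazal2016b} by reconciling the definitions on both sides. There are two identifications to check: first, that our category $\Mod(\cor_{\R_{\al(\gamma)}})$ coincides with the category of persistence modules over $(\R,\leq)$ used in \cite{Chazal2016b}; second, that our notion of ephemeral module (Definition~\ref{def:eph}) agrees, under this identification, with the one of \cite{Chazal2016a,Chazal2016b}.

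First I would recall that, by the equivalence $\theta_\ast \colon \Mod(\cor_{\V_\al}) \stackrel{\sim}{\to} \Mod(\V_\leq)$ established in the previous section (Curry's theorem), specialized to $\V = \R$ with $\gamma = [0,+\infty[$, the category $\Mod(\cor_{\R_{\al(\gamma)}})$ is canonically equivalent to the category of persistence modules over $(\R,\leq)$ which is the setting of \cite{Chazal2016b}. Under this equivalence a sheaf $F$ corresponds to the persistence module $t \mapsto F(t+\gamma)$ with transition maps given by the restriction morphisms $\rho_{t,s} \colon F(s+\gamma) \to F(t+\gamma)$ for $s \leq t$.

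Next I would invoke Lemma~\ref{lem:compeph}: an object $F \in \Mod(\cor_{\R_{\al(\gamma)}})$ lies in $\Eph(\cor_{\R_{\al(\gamma)}})$ if and only if $\rho_{t,s}=0$ for every $s<t$. This is precisely the definition of an ephemeral persistence module used in \cite{Chazal2016a,Chazal2016b}. Consequently, the subcategories of ephemeral objects on both sides of the equivalence $\theta_\ast$ are matched, and $\theta_\ast$ descends to an equivalence of Serre quotients
\begin{equation*}
\Mod(\cor_{\R_{\al(\gamma)}})/\Eph(\cor_{\R_{\al(\gamma)}}) \stackrel{\sim}{\longrightarrow} \Mod(\R_\leq)/\Eph(\R_\leq),
\end{equation*}
where on the right we use the ephemeral subcategory in the sense of \cite{Chazal2016b}.

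Finally, composing this equivalence with the one of \cite[Corollary 2.13]{Chazal2016b}, which asserts $\textnormal{\textbf{Ob}} \simeq \Mod(\R_\leq)/\Eph(\R_\leq)$, yields the desired equivalence $\textnormal{\textbf{Ob}} \simeq \Mod(\cor_{\R_{\al(\gamma)}})/\Eph(\cor_{\R_{\al(\gamma)}})$. The only genuinely non-formal step is the identification of the two notions of ephemeral module, which is exactly what Lemma~\ref{lem:compeph} provides; everything else is bookkeeping via $\theta_\ast$ and the universal property of the Serre quotient (Proposition~\ref{prop:comploc}).
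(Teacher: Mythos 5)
Your proposal is correct and matches the intent of the paper. The paper states this theorem as a citation of \cite[Corollary 2.13]{Chazal2016b} without writing out a proof, but it sets up exactly the bridge you describe: Curry's equivalence $\theta_\ast$ to pass between Alexandrov sheaves and persistence modules, and Lemma~\ref{lem:compeph} to match the two definitions of ephemeral. You make the implicit reconciliation explicit, which is the right thing to do. One small imprecision: you invoke Proposition~\ref{prop:comploc} as ``the universal property of the Serre quotient,'' but that proposition is Gabriel's criterion (exact functor with fully faithful right adjoint induces an equivalence out of a Serre quotient), not the universal property itself, which is stated in the preceding lemma. For the step you need --- that an equivalence of abelian categories carrying one Serre subcategory onto another descends to an equivalence of the quotients --- the universal property (the lemma), not Proposition~\ref{prop:comploc}, is the relevant tool; with that reference corrected the argument is complete.
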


A special case of the following result already appears in \cite[Corollary 6.7]{Berkouk}.

\begin{corollary}\label{cor:obgam}
The observable category $\textnormal{\textbf{Ob}}$ is equivalent to the category $ \Mod(\cor_{\R_{\gamma}})$.
\end{corollary}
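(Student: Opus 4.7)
The strategy is to chain together the two equivalences that are already in hand. Specializing Theorem \ref{thm:GammaSh_SerreQ} to the case $\V = \R$ with the cone $\gamma = [0,+\infty[$, the functor $\alpha^{-1}$ induces an equivalence of abelian categories
\begin{equation*}
\Mod(\cor_{\R_{\al(\gamma)}}) / \Eph(\cor_{\R_{\al(\gamma)}}) \xrightarrow{\sim} \Mod(\cor_{\R_\gamma}).
\end{equation*}
On the other hand, Theorem \ref{thm:Obscat} (recalled from \cite{Chazal2016b}) states that the observable category $\textnormal{\textbf{Ob}}$ is equivalent to exactly the same Serre quotient. Composing these two equivalences gives $\textnormal{\textbf{Ob}} \simeq \Mod(\cor_{\R_\gamma})$, which is the desired statement.

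For this chain to make sense, one must be confident that the notion of ephemeral module used in \cite{Chazal2016b} agrees with Definition \ref{def:eph} after identifying $\Mod(\cor_{\R_{\al(\gamma)}})$ with persistence modules over $(\R, \leq_\gamma)$ via the equivalence $\theta_\ast$. This is exactly what Lemma \ref{lem:compeph} provides: an object $F \in \Mod(\cor_{\R_{\al(\gamma)}})$ satisfies $\beta_\ast F \simeq 0$ if and only if every restriction morphism $\rho_{t,s} \colon F(s+\gamma) \to F(t+\gamma)$ with $s<t$ vanishes, which is precisely the condition defining ephemeral modules in the sense of \cite{Chazal2016a,Chazal2016b}. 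Hence the two Serre subcategories coincide, and so do their quotients.

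There is no real obstacle here; the only point to state carefully is that the equivalence $\theta_\ast$ of the preceding section intertwines the two ephemeral conditions, which is immediate from Lemma \ref{lem:compeph}. The corollary is therefore a direct consequence of Theorem \ref{thm:GammaSh_SerreQ}, Theorem \ref{thm:Obscat}, and Lemma \ref{lem:compeph}.
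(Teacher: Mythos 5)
Your proof is correct and follows essentially the same route as the paper: compose the equivalence of Theorem \ref{thm:GammaSh_SerreQ} with that of Theorem \ref{thm:Obscat}. The paper leaves implicit the compatibility of the two notions of ephemeral module, which you rightly make explicit by citing Lemma \ref{lem:compeph}, but this is a refinement of the same argument rather than a different one.
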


\begin{proof}
Using Theorem \ref{thm:GammaSh_SerreQ} and Theorem \ref{thm:Obscat}, we obtain the following sequence of equivalence
\begin{equation*}
\textnormal{\textbf{Ob}} \simeq \Mod(\cor_{\R_{\al(\gamma)}})/\Eph(\cor_{\R_{\al(\gamma)}}) \simeq  \Mod(\cor_{\R_{\gamma}}).
\end{equation*}
\end{proof}

\subsection{Ephemeral modules in the derived category}

We write $\Der(\cor_{V_\al})$ for the derived category of Alexandrov sheaves and $\Der(\cor_{V_\gamma})$ for the one of $\gamma$-sheaves.

It follows from the preceding subsections that we have the following adjunctions
\begin{align*}
\xymatrix{
\beta_\ast=\alpha^{-1} \colon \Der(\cor_{\V_\al}) \ar[r] & \Der(\cor_{\V_\gamma}) \ar@<.8ex>[l] \ar@<-.8ex>[l] \colon \opb{\beta}, \; \roim{\alpha}.\\
}
\end{align*}

\begin{proposition}
\begin{enumerate}[(i)]
\item the functor $\beta^{-1}$ is fully faithful,
\item the functor $\roim{\alpha}$ is fully faithful.
\end{enumerate}
\end{proposition}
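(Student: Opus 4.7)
The plan is to apply the standard adjunction criterion: for an adjoint pair $L \dashv R$ between (triangulated) categories, $L$ is fully faithful iff the unit $\mathrm{id} \to RL$ is an isomorphism, and $R$ is fully faithful iff the counit $LR \to \mathrm{id}$ is an isomorphism. Both statements will follow from Fact \ref{fact:essential} ($\beta \circ \alpha = \mathrm{id}$) together with the exactness of $\beta_*$ (equivalently $\alpha^{-1}$), which ensures that $R\beta_* = \beta_*$ and lets us commute $\beta_*$ through derived compositions.

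For (i), I would compute the unit of the derived adjunction $\beta^{-1} \dashv R\beta_*$. Since $\beta^{-1}$ is exact as an inverse image functor, it needs no derivation. By Proposition \ref{prop:adjunctpro}(i), $\beta_* \simeq \alpha^{-1}$ is exact, so $R\beta_* = \beta_*$. The unit is then
\begin{equation*}
\mathrm{id} \longrightarrow R\beta_* \circ \beta^{-1} = \beta_* \beta^{-1} \simeq \alpha^{-1}\beta^{-1} \simeq (\beta\alpha)^{-1} = \mathrm{id},
\end{equation*}
where the penultimate identification uses functoriality of inverse image on morphisms of sites and Fact \ref{fact:essential}. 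Because this composition is already an isomorphism at the level of modules (this is exactly how fully faithfulness of $\beta^{-1}$ was obtained abelianly in Proposition \ref{prop:adjunctpro}(iii)), and $\beta^{-1}$, $\beta_*$ are both exact, the induced derived natural transformation is an isomorphism on all complexes.

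For (ii), I would compute the counit of $\alpha^{-1} \dashv R\alpha_*$. Using $\alpha^{-1} \simeq \beta_*$ from Proposition \ref{prop:adjunctpro}(i), the counit is
\begin{equation*}
\alpha^{-1} \circ R\alpha_* \simeq \beta_* \circ R\alpha_* \simeq R(\beta_* \circ \alpha_*) \simeq R(\beta\alpha)_* = R(\mathrm{id})_* = \mathrm{id}.
\end{equation*}
The isomorphism $\beta_* \circ R\alpha_* \simeq R(\beta_* \alpha_*)$ is valid because $\beta_*$ is exact: applying $\beta_*$ to an injective resolution of $F$ computes both sides. Hence the counit is an isomorphism, so $R\alpha_*$ is fully faithful.

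The main obstacle is the justification that $\beta_*$ commutes with the right derivation of $\alpha_*$; this rests on the exactness of $\beta_*$ (given by Proposition \ref{prop:adjunctpro}(i)) and the general principle that a composition $G \circ F$ of left-exact functors satisfies $R(G \circ F) \simeq RG \circ RF$ whenever $F$ sends injectives to $G$-acyclics — a condition that is trivially met when $G$ itself is exact. Once this is in place, both statements reduce to the abelian identities $\beta_*\beta^{-1} \simeq \mathrm{id}$ and $\beta_*\alpha_* \simeq \mathrm{id}$ extracted from Fact \ref{fact:essential}.
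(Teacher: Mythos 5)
Your argument is correct. For part (i) you follow essentially the same route as the paper: the abelian full faithfulness of $\beta^{-1}$ (Proposition \ref{prop:adjunctpro}(iii)) together with the exactness of $\beta^{-1}$ and $\beta_*$ immediately gives the derived statement, since no resolutions are needed and the unit $\mathrm{id} \to \beta_*\beta^{-1}$ passes unchanged to complexes.

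For part (ii) your route genuinely differs from the paper's. The paper simply cites \cite[Exercise~1.14]{KS2006}, which is the adjoint-triple lemma: given $L \dashv M \dashv R$, the left adjoint $L$ is fully faithful if and only if the right adjoint $R$ is. Combined with part (i) and the triple $\beta^{-1} \dashv \beta_* = \alpha^{-1} \dashv \roim{\alpha}$, this gives (ii) in one line. You instead compute the endofunctor $\alpha^{-1}\roim{\alpha}$ directly, using $\alpha^{-1} \simeq \beta_*$, the exactness of $\beta_*$ to commute it past the right derivation ($\beta_* \roim{\alpha} \simeq R(\beta_*\alpha_*)$), and Fact~\ref{fact:essential} to get $\beta_*\alpha_* = (\beta\alpha)_* = \mathrm{id}$. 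This is a more hands-on argument that does not rely on the triple lemma, and it has the advantage of making explicit where the exactness of $\beta_*$ enters; the paper's approach is slicker but less self-contained.

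One small point worth tightening: you say ``the counit is $\alpha^{-1}\roim{\alpha} \simeq \cdots \simeq \mathrm{id}$'' and conclude that the counit is an isomorphism, but strictly speaking what your chain establishes is that $\alpha^{-1}\roim{\alpha}$ is \emph{naturally isomorphic to the identity}, not that the specific isomorphism produced is the adjunction counit. Fortunately this is enough: a natural isomorphism $LR \cong \mathrm{id}$ always forces the counit of $L \dashv R$ to be an isomorphism (transport the comonad structure of $LR$ along the isomorphism to get a comonad structure $(e,d)$ on $\mathrm{id}$; the counit axiom gives $e \circ d = \mathrm{id}$, and naturality of $e$ applied to $d$ gives $d \circ e = e \circ d = \mathrm{id}$, so $e$ is invertible). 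Alternatively, one can short-circuit the issue by invoking the adjoint-triple lemma as the paper does. Either way, the conclusion stands; I'd just recommend adding a sentence to justify the passage from ``$LR \cong \mathrm{id}$'' to ``counit is iso.''
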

\begin{proof}
\noindent (i) follows from Proposition \ref{prop:adjunctpro} as $\beta_\ast$ and $\opb{\beta}$ are exact.\\
\noindent(ii) This follows from \cite[Exercices 1.14]{KS2006}.
\end{proof}

\begin{proposition}\label{prop:cohbound}
The functor $\oim{\alpha}$ has finite cohomological dimension.
\end{proposition}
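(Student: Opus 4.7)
The plan is to bound the cohomological dimension of $\oim{\alpha}$ by $\dim\V$, i.e., to show that $R^{k}\oim{\alpha}F = 0$ for every $F \in \Mod(\cor_{\V_\gamma})$ and every $k > \dim\V$.

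First, I would unwind the definition of the derived pushforward along a morphism of sites: $R^{k}\oim{\alpha}F$ is the sheaf on $\V_\al$ associated with the presheaf
\begin{equation*}
U \mapsto \Hn^{k}(\alpha^{t}(U);F), \qquad U \in \Op(\V_\al),
\end{equation*}
where $\alpha^{t}(U) = \bigcup_{x \in U}(x+\Int(\gamma))$ is a $\gamma$-open subset of $\V$. It therefore suffices to prove that $\Hn^{k}(W;F) = 0$ for every $\gamma$-open $W \subset \V$ and every $k > \dim\V$.

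Second, I would transport the cohomology computation from $\V_\gamma$ back to $\V$ through the Kashiwara--Schapira equivalence. Setting $F' := \opb{\phi_\gamma}F \in \Mod_{\gamma^{\circ,a}}(\cor_\V)$, the corollary following Theorem~1.5 gives $F \simeq \oim{\phi_\gamma}F'$, and the derived version of the equivalence implies $F \simeq \roim{\phi_\gamma}F'$ in $\Derb(\cor_{\V_\gamma})$; in particular $R^{k}\oim{\phi_\gamma}F' = 0$ for $k > 0$. Consequently, for every $\gamma$-open $W$,
\begin{equation*}
\Rg(W;F) \simeq \Rg(W;\roim{\phi_\gamma}F') \simeq \Rg(\opb{\phi_\gamma}(W);F') = \Rg(W;F'),
\end{equation*}
using that $\phi_\gamma$ is the identity on the underlying set. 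Since $F'$ is a genuine sheaf on the paracompact space $\V \simeq \R^{\dim\V}$, whose sheaf-theoretic cohomological dimension equals $\dim\V$, one has $\Hn^{k}(W;F') = 0$ for $k > \dim\V$, hence $\Hn^{k}(W;F) = 0$, and $R^{k}\oim{\alpha}F = 0$ for $k > \dim\V$.

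The only delicate point is verifying that the abelian Kashiwara--Schapira equivalence agrees with the restriction to the hearts of its derived counterpart, i.e.\ that $\roim{\phi_\gamma}$ and $\oim{\phi_\gamma}$ coincide on $\Mod_{\gamma^{\circ,a}}(\cor_\V)$. This is formal: an equivalence of triangulated categories which identifies hearts automatically forces the higher derived functors to vanish on objects lying in the heart. Beyond this bookkeeping, no additional analysis is needed, and the bound $\dim\V$ follows.
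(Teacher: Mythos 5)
Your proof is correct and follows essentially the same strategy as the paper: both transfer the computation to the usual topology on $\V$ via the Kashiwara--Schapira equivalence $\opb{\phi_\gamma}$, $\roim{\phi_\gamma}$, and then invoke the finiteness of sheaf cohomology on $\R^{\dim\V}$. The only difference is in the bookkeeping --- you argue via the presheaf description of $R^k\oim{\alpha}$ and the cohomological dimension of $\gamma$-open subsets, whereas the paper works with a bounded injective resolution of $\opb{\phi_\gamma}F$ and the fact that $\oim{\phi_\gamma}$ preserves injectives --- which incidentally gives you the slightly sharper bound $\dim\V$ in place of the paper's $\dim\V + 1$.
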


\begin{proof}
Let $F \in \Mod(\cor_{\V_\gamma})$. Since $\V$ is a real vector space of dimension $n$, it follows that there exists an injective resolution $\cI$ of $\opb{\phi_\gamma} F$ of the form
\begin{equation*}
0 \to I^0 \to I^1 \to \cdots \to I^{n+1} \to 0.
\end{equation*}
As $\roim{\phi_\gamma}\opb{\phi_\gamma} F \simeq F$, it follows that $F \simeq   \oim{\phi_\gamma} \cI$. Since $\oim{\phi_\gamma}$ preserves bounded complexes of injectives, $\oim{\phi_\gamma} \cI$ is again a bounded complex of injectives. Thus,
\begin{equation*}
\roim{\alpha} F \simeq \oim{\alpha}\oim{\phi_\gamma} \cI. 
\end{equation*}
Hence, for $k > n+1$, 
\begin{equation*}
\Hn^k(\roim{\alpha} F) \simeq \Hn^k(\oim{\alpha} \oim{\phi_\gamma} \cI) \simeq 0.
\end{equation*}
\end{proof}

\begin{remark}
It follows from Proposition \ref{prop:cohbound} that the functor $\roim{\alpha} \colon \Derb(\cor_{\V_\gamma}) \to \Derb(\cor_{\V_\al})$ is well defined. Note that, in this paper, the results stated for the unbounded derived categories $\Der(\cor_{\V_\gamma})$ and $\Der(\cor_{\V_\al})$ also hold for their bounded counterparts $\Derb(\cor_{\V_\gamma})$ and $\Derb(\cor_{\V_\al})$.
\end{remark}

We write $\Der_\Eph(\cor_{\V_\al})$ for the full subcategory of $\Der(\cor_{\V_\al})$ consisting of objects $F \in \Der(\cor_{\V_\al})$ such that for every $i \in \Z$, $\Hn^i(F) \in \Eph(\cor_{\V_\al})$. Since $\Eph(\cor_{\V_\al})$ is a thick abelian subcategory of $\Mod(\cor_{\V_\al})$, $\Der_\Eph(\cor_{\V_\al})$ is a triangulated subcategory of $\Der(\cor_{\V_\al})$. We consider the full subcategory of $\Der(\cor_{\V_\al})$
\begin{equation*}
\Ker{\opb{\alpha}}= \lbrace F \in \Der(\cor_{\V_\al}) \; \vert \; \opb{\alpha} F \simeq 0 \rbrace.
\end{equation*}

Recall that a subcategory $\cC$ of a triangulated category $\cT$ is thick if it is triangulated and it contains all direct summands of its objects. It is clear that $\Ker{\opb{\alpha}}$ is thick and closed by isomorphisms.

\begin{lemma}
The triangulated category $\Der_\Eph(\cor_{\V_\al})$ is equivalent to the triangulated category $\Ker \opb{\alpha}$.
\end{lemma}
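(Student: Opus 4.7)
The plan is to observe that the two subcategories are literally described by the same condition once we note that $\opb{\alpha}$ is an exact functor, so it commutes with taking cohomology objects.

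First I would note that $\opb{\alpha} \simeq \beta_\ast \simeq \alpha^{-1}$ on the underlying module categories (by Proposition \ref{prop:adjunctpro}(i)). Since $\alpha^{-1}$ is a left adjoint it is right exact, and since it agrees with $\beta_\ast$ which is a right adjoint it is also left exact; hence $\opb{\alpha} \colon \Mod(\cor_{\V_\al}) \to \Mod(\cor_{\V_\gamma})$ is exact. Consequently, the induced functor on the (unbounded) derived categories can be computed termwise on any complex representing $F$, and in particular one has a natural isomorphism
\begin{equation*}
\Hn^i(\opb{\alpha} F) \;\simeq\; \opb{\alpha}\,\Hn^i(F)
\end{equation*}
for every $F \in \Der(\cor_{\V_\al})$ and every $i \in \Z$.

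Next I would use this identity to string together a chain of equivalences: $F \in \Ker\opb{\alpha}$ iff $\opb{\alpha} F \simeq 0$ in $\Der(\cor_{\V_\gamma})$, iff $\Hn^i(\opb{\alpha} F) \simeq 0$ for all $i$, iff $\opb{\alpha}\Hn^i(F) \simeq 0$ for all $i$ (by the exactness just noted), iff $\Hn^i(F) \in \Ker(\opb{\alpha}|_{\Mod(\cor_{\V_\al})}) = \Eph(\cor_{\V_\al})$ for all $i$ (by Definition \ref{def:eph}), iff $F \in \Der_\Eph(\cor_{\V_\al})$. This shows that the two full subcategories of $\Der(\cor_{\V_\al})$ coincide on objects, and since both are full subcategories of the same ambient triangulated category, the inclusion is an equivalence of triangulated categories.

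There is no real obstacle: the content of the lemma is entirely absorbed by the exactness of $\opb{\alpha}$, which is already built into Proposition \ref{prop:adjunctpro}(i). The only thing worth being careful about is that $\opb{\alpha}$ really is exact at the derived level (not just half-exact); this is what makes the identity $\Hn^i \circ \opb{\alpha} \simeq \opb{\alpha} \circ \Hn^i$ work, and it follows from having both a left and a right adjoint on the abelian level.
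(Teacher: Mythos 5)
Your proof is correct and is essentially the same argument the paper has in mind, just spelled out in more detail: the paper's proof simply says ``this follows immediately from the exactness of $\alpha^{-1}$,'' and you have unpacked exactly how that exactness (via $\Hn^i \circ \opb{\alpha} \simeq \opb{\alpha} \circ \Hn^i$) gives the chain of equivalences identifying the two full subcategories.
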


\begin{proof}
This follows immediately form the exactness of $\alpha^{-1}$.
\end{proof}

We now briefly review the notion of localization of triangulated categories. References are made to \cite[Chapter 7]{KS2006} and \cite{Krausea}.

Let $\cT$ be a triangulated category and $\cN$ be a triangulated full subcategory of $\cT$. We write $W(\cN)$ for the set of maps $f:X \to Y$ of $\cT$ which sit into a triangle of the form
\begin{equation*}
X \stackrel{f}{\to} Y \to Z \stackrel{+1}{\to}
\end{equation*}
where $Z \in \cN$. By definition the quotient of $\cT$ by $\cN$ is the localization of $\cT$ with respect to the set of maps $W(\cN)$. That is
\begin{equation*}
\cT /\cN := \cT[W(\cN)^{-1}]
\end{equation*} 
together with the localization functor
\begin{equation*}
Q \colon \cT \to \cT /\cN.
\end{equation*}
The following proposition is well-known.
\begin{proposition}
Let $L \colon \cC \leftrightarrows \cD \colon R$ be an adjunction. Assume that the right adjoint $R$ is fully faithful. Then $L \colon \cC \to \cD$ is the localization of $\cC$ with respect to the set of morphisms 
\begin{equation*}
W=\lbrace f : X \to Y \in \Mor(\cC) \; \vert \; L(f) \;\textnormal{is an isomorphism} \rbrace.
\end{equation*}
\end{proposition}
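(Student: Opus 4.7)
The plan is to verify the universal property of localization directly: every functor $F \colon \cC \to \cE$ which inverts the morphisms in $W$ factors, uniquely up to unique isomorphism, through $L$. The two structural facts I will exploit are standard consequences of the adjunction $L \dashv R$ together with the assumption that $R$ is fully faithful. First, full faithfulness of $R$ is equivalent to the counit $\epsilon \colon L \circ R \to \id_{\cD}$ being a natural isomorphism. Second, for every object $X \in \cC$, the unit $\eta_X \colon X \to RLX$ belongs to $W$: indeed, by the triangle identity $\epsilon_{LX} \circ L(\eta_X) = \id_{LX}$, and since $\epsilon_{LX}$ is invertible, so is $L(\eta_X)$.

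With these two facts in hand, the factorization is essentially forced. First, $L$ itself tautologically sends elements of $W$ to isomorphisms, so $L$ is an admissible test functor. Given any $F \colon \cC \to \cE$ inverting $W$, define $\bar{F} := F \circ R \colon \cD \to \cE$. Whiskering the unit produces a natural transformation $F \eta \colon F \Rightarrow F \circ R \circ L = \bar{F} \circ L$, and because $\eta_X \in W$ for every $X$, this transformation is a natural isomorphism. This yields the existence of a factorization $F \simeq \bar{F} \circ L$.

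For uniqueness, suppose $G \colon \cD \to \cE$ is any functor equipped with a natural isomorphism $G \circ L \simeq F$. Composing on the right with $R$ and using that $\epsilon$ is an isomorphism gives $G \simeq G \circ L \circ R \simeq F \circ R = \bar{F}$, and compatibility with the chosen isomorphism $F \simeq \bar{F} \circ L$ is enforced by the triangle identities. The main obstacle is not the construction itself but checking that the argument respects whatever extra structure is at play in the intended application: here the relevant situation is triangulated (the categories $\Der(\cor_{\V_\al})$, $\Der(\cor_{\V_\gamma})$, and $\Ker \opb{\alpha}$), so one must verify that $\bar{F} = F \circ R$ is triangulated whenever $F$ is. This is automatic, since $R$ is triangulated as the right adjoint of a triangulated functor between triangulated categories, so no additional verification is needed.
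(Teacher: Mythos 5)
The paper states this proposition as ``well-known'' and provides no proof, so there is no in-paper argument to compare against. Your argument is the standard one (essentially Gabriel--Zisman), and it is correct. The two ingredients you isolate are exactly the right ones: $R$ fully faithful if and only if the counit $\epsilon\colon LR\Rightarrow\id_{\cD}$ is an isomorphism, and $\eta_X\in W$ for all $X$ via the triangle identity $\epsilon_{LX}\circ L(\eta_X)=\id_{LX}$. From there the candidate factorization $\bar F=F\circ R$ with comparison $F\eta\colon F\Rightarrow\bar F L$ is forced, and your uniqueness step $G\simeq GLR\simeq FR=\bar F$ is the right computation; the compatibility of this isomorphism with the chosen $F\eta$ does indeed reduce, after using naturality of $\psi\colon F\simeq GL$, to the triangle identity $\epsilon L\circ L\eta=\id_L$, and uniqueness of the comparison then follows by whiskering with $R$ and invoking that $FR\epsilon$ is invertible. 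One caveat worth being aware of: you are verifying the $2$-categorical universal property (factorization up to natural isomorphism, unique up to unique isomorphism), which is the relevant one here and in the cited source, but which is weaker than the strict universal property of the Gabriel--Zisman calculus-of-fractions construction; the precise statement obtained is that the canonical comparison $\cC[W^{-1}]\to\cD$ is an equivalence. Your closing remark about the triangulated setting is also correct: the right adjoint of an exact functor between triangulated categories is automatically exact, so $\bar F=F\circ R$ inherits exactness.
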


\begin{proposition}
The category $\Der(\cor_{\V_\gamma})$ is the quotient of $\Der(\cor_{\V_\al})$ by $\Der_\Eph(\cor_{\V_\al})$ via the localization functor $\alpha^{-1} \colon \Der(\cor_{\V_\al}) \to \Der(\cor_{\V_\gamma})$. In particular, $\Der(\cor_{\V_\al})/\Der_\Eph(\cor_{\V_\al}) \simeq \Der(\cor_{\V_\gamma})$.
\end{proposition}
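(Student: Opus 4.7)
The plan is to reduce the statement to the abstract localization proposition stated immediately before it, applied to the adjunction $\opb{\alpha} \dashv \roim{\alpha}$ on the unbounded derived categories. We already know from Proposition \ref{prop:cohbound} (and its proof) that $\roim{\alpha}$ exists and is well-defined on the unbounded level, and we have shown above that $\roim{\alpha}$ is fully faithful. Hence the hypotheses of the abstract proposition are satisfied and we conclude that $\opb{\alpha} \colon \Der(\cor_{\V_\al}) \to \Der(\cor_{\V_\gamma})$ is the localization of $\Der(\cor_{\V_\al})$ with respect to the class
\[
W = \bigl\{ f \in \Mor(\Der(\cor_{\V_\al})) \; \bigl| \; \opb{\alpha}(f) \text{ is an isomorphism} \bigr\}.
\]

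The remaining task is to identify this class $W$ with $W(\Der_\Eph(\cor_{\V_\al}))$, the class of morphisms whose cone lies in $\Der_\Eph(\cor_{\V_\al})$. For this I would argue as follows. The functor $\opb{\alpha}$ is triangulated, so for any distinguished triangle $X \xrightarrow{f} Y \to Z \xrightarrow{+1}$ we obtain a distinguished triangle $\opb{\alpha}X \to \opb{\alpha}Y \to \opb{\alpha}Z \xrightarrow{+1}$ in $\Der(\cor_{\V_\gamma})$. Thus $\opb{\alpha}(f)$ is an isomorphism if and only if $\opb{\alpha} Z \simeq 0$, that is $Z \in \Ker\opb{\alpha}$. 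This gives the equality $W = W(\Ker\opb{\alpha})$.

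Combining with the lemma just proved which identifies $\Ker\opb{\alpha}$ with $\Der_\Eph(\cor_{\V_\al})$, we get $W = W(\Der_\Eph(\cor_{\V_\al}))$, so that by definition of the Verdier quotient
\[
\Der(\cor_{\V_\al})/\Der_\Eph(\cor_{\V_\al}) = \Der(\cor_{\V_\al})\bigl[W(\Der_\Eph(\cor_{\V_\al}))^{-1}\bigr] \simeq \Der(\cor_{\V_\gamma}),
\]
the second equivalence being induced precisely by $\opb{\alpha}$ via its universal property.

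The only subtle point is the verification that the full faithfulness of $\roim{\alpha}$ actually transports to the unbounded level (so that the abstract localization proposition can be applied there). This is where I expect the main obstacle to lie, but it is handled by Proposition \ref{prop:cohbound}: since $\oim{\alpha}$ has finite cohomological dimension, for every $F \in \Der(\cor_{\V_\gamma})$ we may compute $\roim{\alpha} F$ using a bounded injective resolution of the form produced in that proof, and the adjunction unit/counit $\opb{\alpha}\roim{\alpha} F \to F$ can be checked to be an isomorphism stalkwise using Fact \ref{fact:essential}, namely $\beta \circ \alpha = \id$. Once this is in hand, the rest of the argument is purely formal.
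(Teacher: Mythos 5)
Your argument is essentially the paper's proof: invoke the abstract localization proposition for the adjunction $\opb{\alpha}\dashv\roim{\alpha}$ (with $\roim{\alpha}$ fully faithful, already established in the preceding proposition), then identify $W$ with the morphisms whose cone lies in $\Ker\opb{\alpha}=\Der_\Eph(\cor_{\V_\al})$ by taking a distinguished triangle on $f$ and using exactness of $\opb{\alpha}$. The closing worry about full faithfulness transporting to the unbounded level is superfluous, since the paper has just proved that $\roim{\alpha}\colon\Der(\cor_{\V_\gamma})\to\Der(\cor_{\V_\al})$ is fully faithful (citing the relevant exercise of Kashiwara--Schapira); moreover the clean reason is simply that $\opb{\alpha}=\beta_\ast$ is exact and $\beta_\ast\alpha_\ast\simeq\id$, so $\opb{\alpha}\roim{\alpha}\simeq\id$ at the derived level directly -- no need for the stalkwise argument you sketch, which is in any case not a natural tool in the Alexandrov/$\gamma$ setting.
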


\begin{proof}
Let $W=\lbrace f  \in \Mor(\cC) \; \vert \; \opb{\alpha}(f) \;\textnormal{is an isomorphism} \rbrace$.  Let $f\colon F \to G$ be a morphism of $\Der(\cor_{\V_\al})$.
By the axiom of triangulated categories, $f$ sits in a distinguished triangle
\begin{equation*}
F \stackrel{f}{\to} G \to H \stackrel{+1}{\to}.
\end{equation*}
Hence $\opb{\alpha}(f)$ is an isomorphism if and only if $\opb{\alpha} H \simeq 0$. That is if $H \in \Der_{\Eph}(\cor_{\V_\al})$. This proves the claim.
\end{proof}

\section{Distances on categories of sheaves}
\subsection{Preliminary facts}
Let $\V$ be a finite dimensional vector space, $\gamma \subset \V$ be a cone satisfying \eqref{hyp:cone} and $v\in \V$. The map
\begin{equation*}
\tau_v \colon \V \to \V, \; x \mapsto x - v
\end{equation*}
is continuous for the usual, the Alexandrov and the $\gamma$ topologies on $\V$.

\subsubsection{Alexandrov \& $\gamma$-topology}

Let $v, \;w \in \V$ and assume that $w \leq_\gamma v$. Let $F \in \Mod(\cor_{\V_\al})$ (resp. $\Mod(\cor_{\V_\gamma})$). Since $w+\gamma \subset v+\gamma$, it follows that for every $U \in \Op(\V_\al)$ (resp. $\Op(\V_\gamma)$), $U +w \subset U+v$. Hence, the restriction morphisms $\rho_{U+v,U+w}$ of $F$ allows to define a morphism of sheaves
\begin{equation*}
\chi^\al_{v,w}(F) \colon \oim{\tau_v} F \to \oim{\tau_w} F
\end{equation*}
by setting for every open subset $U$, $\chi^\al_{v,w}(F)_U:=\rho_{U+v,U+w}$.

This construction is extended to the derived category $\Der(\cor_{\V_\al})$ as follows. 
Let $F \in \Der(\cor_{\V_\al})$. Replacing $F$ by an homotopically injective resolution $\cI$, and using the restriction morphisms of $\cI$ as in the preceeding construction, we obtain a morphism of sheaves
\begin{equation*}
\oim{\tau_v} \cI \to \oim{\tau_w} \cI.
\end{equation*}
This provides a morphism
\begin{equation*}
\chi^\al_{v,w}(F) \colon \oim{\tau_v} F \to \oim{\tau_w} F.
\end{equation*}
It follows that there is a morphism of functors from $\Der(\cor_{\V_\al})$ to $\Der(\cor_{\V_\al})$ 
\begin{equation}\label{mor:bismooth}
\chi^\al_{v,w} \colon \oim{\tau_v} \to \oim{\tau_w}.
\end{equation}
In a similar way, we obtain a morphism of functors from $\Der(\cor_{\V_\gamma})$ to $\Der(\cor_{\V_\gamma})$
\begin{equation}\label{mor:bismoothgamma}
\chi^\gamma_{v,w} \colon \oim{\tau_v} \to \oim{\tau_w}.
\end{equation}
One immediately verify that for every $F \in \Der(\cor_{\V_\al})$ and $G \in \Der(\cor_{\V_\gamma})$
\begin{equation}\label{mor:smoothing_op_betastar}
\beta_\ast \chi^\al_{v,w}(F) \simeq \chi^\gamma_{v,w} \,(\oim{\beta} F),
\end{equation}
\begin{equation}\label{mor:smoothing_op_alphastar}
\roim{\alpha} \chi^\gamma_{v,w}(G) \simeq \chi^\al_{v,w} \,(\roim{\alpha} G).
\end{equation}

\begin{lemma}\label{lem:beta_smooth}
For every $F \in \Der(\cor_{\V_\gamma})$, there is the following canonical isomorphism
\begin{equation*}
\opb{\beta} \chi^\gamma_{v,w}(F) \simeq \chi^\al_{v,w} \,(\opb{\beta}F).
\end{equation*}
\end{lemma}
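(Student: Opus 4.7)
The plan is to reduce the claim to a strict commutation between the morphism of sites $\beta$ and the translation map $\tau_v$, then to verify that this commutation is compatible with the restriction morphisms that define $\chi^\gamma_{v,w}$ and $\chi^\al_{v,w}$. The argument is essentially formal, but requires a little bookkeeping.

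First, I would establish that the square of morphisms of sites
\[
\xymatrix{
\V_\al \ar[r]^-{\tau_v^\al} \ar[d]_-{\beta} & \V_\al \ar[d]^-{\beta} \\
\V_\gamma \ar[r]^-{\tau_v^\gamma} & \V_\gamma
}
\]
commutes. On the basis $\{x+\Int(\gamma)\}_{x\in\V}$ of $\V_\gamma$, both compositions act by $x+\Int(\gamma)\mapsto (x-v)+\Int(\gamma)$, so there is nothing to check beyond tracing definitions. This produces a canonical natural isomorphism $\oim{\beta}\oim{\tau_v^\al}\simeq\oim{\tau_v^\gamma}\oim{\beta}$ on sheaves, hence by adjunction $\opb{\tau_v^\al}\opb{\beta}\simeq\opb{\beta}\opb{\tau_v^\gamma}$. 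Since $\tau_v$ is a homeomorphism for each of the Alexandrov and $\gamma$-topologies, $\oim{\tau_v}$ is an exact equivalence with inverse $\opb{\tau_v}=\oim{\tau_{-v}}$; substituting $-v$ for $v$ therefore yields the natural isomorphism
\[
\kappa_v \colon \opb{\beta}\oim{\tau_v^\gamma}\xrightarrow{\sim}\oim{\tau_v^\al}\opb{\beta}.
\]
Because $\opb{\beta}$ is exact and each $\oim{\tau_v}$ is exact, this identity passes without modification to the (unbounded) derived categories.

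The second step is to verify that under $\kappa_v$ and $\kappa_w$ the morphism $\opb{\beta}\chi^\gamma_{v,w}(F)$ coincides with $\chi^\al_{v,w}(\opb{\beta}F)$, i.e.\ that the square
\[
\xymatrix{
\opb{\beta}\oim{\tau_v^\gamma} F \ar[r]^-{\opb{\beta}\chi^\gamma_{v,w}(F)} \ar[d]_-{\kappa_v}^-{\wr} & \opb{\beta}\oim{\tau_w^\gamma} F \ar[d]^-{\kappa_w}_-{\wr} \\
\oim{\tau_v^\al}\opb{\beta} F \ar[r]^-{\chi^\al_{v,w}(\opb{\beta}F)} & \oim{\tau_w^\al}\opb{\beta} F
}
\]
commutes. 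Since all functors in play are exact, it suffices to check commutativity on objects in $\Mod(\cor_{\V_\gamma})$ rather than on resolutions. Both $\chi^\al_{v,w}$ and $\chi^\gamma_{v,w}$ are, by construction, induced by the restriction morphism along $U+w\subset U+v$ (well-defined since $w\leq_\gamma v$). Evaluating at a basic Alexandrov open of the form $x+\Int(\gamma)$ — on which $\opb{\beta}$ reduces to the identity via the isomorphism $\opb{\beta}\simeq\beta^\dagger$ described analogously to Proposition~\ref{prop:adjunctpro}(i) — both horizontal maps are the same restriction $F((x+v)+\Int(\gamma))\to F((x+w)+\Int(\gamma))$ in the sheaf $F$. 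Commutativity of the square then follows.

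The conceptual content is minimal; the main obstacle is to keep the sites, the natural transformation, and the derived enhancements consistent. Concretely, one must carefully check that the isomorphism $\kappa_v$ obtained via adjunction is described by the identity map on basic opens (this is where the explicit formula $\beta^t(x+\Int(\gamma))=x+\Int(\gamma)$ matters), so that the defining restriction morphisms of $\chi^\gamma$ and $\chi^\al$ are literally the same map under the identification. Once this bookkeeping is done, no further calculation is required, and the same strategy applies verbatim to prove the analogous compatibilities \eqref{mor:smoothing_op_betastar} and \eqref{mor:smoothing_op_alphastar} already recorded in the paper.
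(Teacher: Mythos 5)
Your first step—establishing the commutation $\kappa_v\colon \opb{\beta}\oim{\tau_v}\simeq\oim{\tau_v}\opb{\beta}$ via the square of morphisms of sites—is correct, and it spells out the observation, asserted without proof in the paper, that $\opb{\beta}$ commutes with $\oim{\tau_v}$ and $\oim{\tau_w}$. The gap is in the second step, where you claim to verify commutativity of the square by evaluating on opens of the form $x+\Int(\gamma)$. These are not basic Alexandrov opens: the basic opens of $\V_\al$ are the sets $x+\gamma$, while $\{x+\Int(\gamma)\}_{x\in\V}$ is a basis only for the strictly coarser $\gamma$-topology and does not form a basis of the Alexandrov topology. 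Consequently, commutativity of a diagram of Alexandrov sheaves is in general not detected by sections over $\gamma$-opens; for instance, any nonzero ephemeral module has all its $\gamma$-sections zero, so two distinct morphisms between ephemeral modules agree on every $x+\Int(\gamma)$. Moreover, the purported analogue of Proposition~\ref{prop:adjunctpro}(i) for $\beta$ (that $\opb{\beta}\simeq\beta^\dagger$ is already a sheaf) is not in the paper and is not what you need: the correct identification $\opb{\beta}F(x+\Int(\gamma))\simeq F(x+\Int(\gamma))$ for a $\gamma$-open follows from the unit isomorphism $\id\simeq\oim{\beta}\opb{\beta}$ (full faithfulness of $\opb{\beta}$), not from a presheaf formula.

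To close the gap you must use full faithfulness of $\opb{\beta}$, not a basis argument. All four corners of your square lie in the essential image of $\opb{\beta}$ (via $\kappa_v$, $\kappa_w$), and the horizontal arrows are then both of the form $\opb{\beta}(\text{something})$: the top one tautologically, the bottom one $\chi^\al_{v,w}(\opb{\beta}F)\simeq\opb{\beta}f$ for a unique $f\colon\oim{\tau_v}F\to\oim{\tau_w}F$ precisely because $\opb{\beta}$ is fully faithful and commutes with the translations. One can then apply $\oim{\beta}$, use $\oim{\beta}\opb{\beta}\simeq\id$ and Formula~\eqref{mor:smoothing_op_betastar} to identify $f\simeq\chi^\gamma_{v,w}(F)$, and deduce the statement. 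This is exactly the paper's proof; your argument has the right ingredients but replaces the essential full-faithfulness step by an insufficient ``check on $\gamma$-opens'' step.
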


\begin{proof}
Let $F \in \Der(\cor_{\V_\gamma})$ and consider the canonical morphism.
\begin{equation*}
\chi^\al_{v,w}(\opb{\beta}F)\colon \oim{\tau_v}\opb{\beta}F \to \oim{\tau_w}\opb{\beta}F.
\end{equation*}
Since $\opb{\beta}$ is fully faithful and commutes with $\oim{\tau_v}$ and $\oim{\tau_w}$, there exists a unique morphism $f\colon \oim{\tau_v}F \to \oim{\tau_w}F$ such that the following diagram commutes
\begin{equation*}
\xymatrix@C=1.6cm{
 \oim{\tau_v}\opb{\beta}F \ar[r]^-{\chi^\al_{v,w}(\opb{\beta}F)} \ar[d]_-{\wr} 
 & \oim{\tau_w}\opb{\beta}F \ar[d]^-{\wr}\\
\opb{\beta}\oim{\tau_v}F \ar[r]^-{\opb{\beta}f}& \opb{\beta}\oim{\tau_w}F.
}
\end{equation*}
Hence, $\chi^\al_{v,w}(\opb{\beta}f) \simeq \opb{\beta}f$. Applying $\oim{\beta}$ to the preceding formula, we get $\oim{\beta}\chi^\al_{v,w}(\opb{\beta} F) \simeq \oim{\beta}\opb{\beta}f$. It follows from the fully faithfulness of $\opb{\beta}$ and from Formula \eqref{mor:smoothing_op_betastar} that
\begin{equation*}
\chi^\gamma_{v,w}(F) \simeq f.
\end{equation*}
Thus, $\opb{\beta}\chi^{\gamma}_{v,w}(F) \simeq \chi^\al_{v,w}(\opb{\beta} F)$.

\end{proof}

Let $F \in \Der(\cor_{\V_\al})$ and  $G \in \Der(\cor_{\V_\gamma})$. If $w=0$ and $v \in \gamma^a$, the morphisms \eqref{mor:bismooth} and \eqref{mor:bismoothgamma} provide respectively  the canonical morphisms

\begin{equation*}\label{mor:smooth}
\chi^\al_{v,0} \colon \oim{\tau_v}F \to F,
\end{equation*} 

\begin{equation*}\label{mor:smoothgamma}
\chi^\gamma_{v,0} \colon  \oim{\tau_v}G \to G.
\end{equation*}

\begin{remark}\label{Rem:ab}
In the abelian cases i.e. for the categories $\Mod(\cor_{\V_\al})$ and $\Mod(\cor_{\V_\gamma})$ similar morphisms exist. They can be constructed directly or induced from the derived cases by using the following facts. If $\cA$ is an abelian category and $\Der(\cA)$ is its derived category, then the canonical functor
\begin{align*}
 \iota \colon \cA \to \Der(\cA)  
\end{align*}
which send an object of $\cA$ to the corresponding complex concentrated in degree zero is fully faithful. Moreover, $\Hn^0 \circ \, \iota \simeq \id$ and for every $v \in V$, $\oim{\tau_v}$ is exact and thus, commutes with $\Hn^0$. Hence, we will focus on the derived situations as it implies, here, the abelian case. 
\end{remark}

\subsubsection{The microlocal setting}
We now construct similar morphisms for sheaves in $\Derb_{\gamma^{\circ,a}}(\cor_\V)$. This construction is classical (see for instance \cite{GS2014}). We provide it for the convenience of the reader.

\begin{lemma}\label{lem:transker}
Let $F \in \Derb_{\gamma^{\circ\,a}}(\cor_{\V})$ and $u \in \V$. Then there is a functorial isomorphism
\begin{align*}
\tau_{u \ast} F \simeq \cor_{-u+\gamma^a} \sconv[np] F.
\end{align*}
\end{lemma}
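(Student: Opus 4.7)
The plan is to reduce the statement to Proposition \ref{prop:gammaloc} together with a simple base-change identity between translation and the non-proper convolution. I expect no serious difficulty; the main point is to keep track of how $\tau_{u\ast}$ interacts with $\roim{s}$.

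First, I would compute the effect of $\tau_{u\ast}$ on the structure sheaf $\cor_{\gamma^a}$. Since $\tau_u \colon \V \to \V$, $x \mapsto x-u$ is a homeomorphism, pushing forward along $\tau_u$ carries the constant sheaf on the closed set $\gamma^a$ to the constant sheaf on its image $\tau_u(\gamma^a) = -u+\gamma^a$. This gives
$$ \tau_{u\ast}\,\cor_{\gamma^a} \simeq \cor_{-u+\gamma^a}. $$

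The key step is the identity
$$ \tau_{u\ast}(G \sconv[np] H) \simeq (\tau_{u\ast} G) \sconv[np] H \qquad \text{for all } G,H \in \Derb(\cor_\V). $$
This follows from the equality of continuous maps $\tau_u \circ s = s \circ (\tau_u \times \id_\V)$ on $\V \times \V$. Indeed, unfolding the definition $G \sconv[np] H = \roim{s}(G \boxtimes H)$ and using that $\tau_u \times \id_\V$ is a homeomorphism, one obtains
$$ \tau_{u\ast}\,\roim{s}(G \boxtimes H) \simeq \roim{s}\,(\tau_u \times \id_\V)_{\ast}(G \boxtimes H) \simeq \roim{s}\bigl((\tau_{u\ast}G) \boxtimes H\bigr), $$
functorially in $G$ and $H$.

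Finally, since $F \in \Derb_{\gamma^{\circ,a}}(\cor_\V)$, Proposition \ref{prop:gammaloc} gives a functorial isomorphism $F \simeq \cor_{\gamma^a} \sconv[np] F$. Combining the three ingredients yields
$$ \tau_{u\ast} F \simeq \tau_{u\ast}(\cor_{\gamma^a} \sconv[np] F) \simeq (\tau_{u\ast}\cor_{\gamma^a}) \sconv[np] F \simeq \cor_{-u+\gamma^a} \sconv[np] F, $$
as desired. The only possible obstacle is the base-change identity for $\tau_{u\ast}$ and $\roim{s}$, but since $\tau_u \times \id_\V$ is an isomorphism the proper base change / direct image compatibility is automatic, so the argument goes through without further work.
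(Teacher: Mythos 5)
Your proof is correct and follows essentially the same route as the paper: invoke Proposition \ref{prop:gammaloc} to write $F \simeq \cor_{\gamma^a} \sconv[np] F$, then use $\tau_u \circ s = s \circ (\tau_u \times \id)$ and the fact that $\tau_u$ (hence $\tau_u\times\id$) is a homeomorphism to slide $\tau_{u\ast}$ through the convolution onto $\cor_{\gamma^a}$. The paper compresses these steps into a single three-line computation, while you spell out the intermediate identity $\tau_{u\ast}(G\sconv[np] H)\simeq(\tau_{u\ast}G)\sconv[np] H$; the only cosmetic remark is that this identity is functoriality of $\roim{}$ along the equality of maps $\tau_u\circ s=s\circ(\tau_u\times\id)$ rather than a base change, but the argument is sound.
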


\begin{proof}
It follows from Proposition \ref{prop:gammaloc} that the canonical morphism $\cor_{\gamma^a} \sconv[np] F \to F$ is an isomorphism and $\tau_u \circ s =s \circ (\tau_u \times \id)$. Hence
\begin{align*}
\tau_{u \ast} F &\simeq \tau_{u \ast} (\cor_{\gamma^a} \sconv[np] F)\\ 
                & \simeq s_\ast (\tau_{u} \times \id)_\ast (\cor_{\gamma^a} \boxtimes F)\\
                & \simeq \cor_{-u+\gamma^a} \sconv[np] F. 
\end{align*} 
\end{proof}
For $w\leq_\gamma v$, the canonical map
\begin{equation*}
\cor_{-v+\gamma^a} \to \cor_{-w+\gamma^a} 
\end{equation*}
induces a morphism of functors
\begin{equation}
\cor_{-v+\gamma^a} \sconv[np] (\cdot) \to \cor_{-w+\gamma^a} \sconv[np] (\cdot).
\end{equation}
Using Lemma \ref{lem:transker}, we obtain a morphism of functors from $\Derb_{\gamma^{\circ\,a}}(\cor_{\V})$ to $\Derb_{\gamma^{\circ\,a}}(\cor_{\V})$ 

\begin{equation}\label{mor:bismoothmu}
\chi^\mu_{v,w} \colon \oim{\tau_v} \to \oim{\tau_w}.
\end{equation}

\begin{lemma}\label{lem:microgam} Let $F \in \Derb_{\gamma^{\circ\,a}}(\cor_{\V})$ and $G \in \Derb(\cor_{\V_\gamma})$. There are the following canonical isomorphisms
\begin{equation}\label{iso:gammamicro}
\roim{\phi_\gamma} \chi^\mu_{v,w}(F) \simeq \chi^\gamma_{v,w} (\roim{\phi_\gamma}F),
\end{equation}

\begin{equation}\label{iso:microgamma}
\opb{\phi_\gamma} \chi^\gamma_{v,w}(G) \simeq \chi^\mu_{v,w} (\opb{\phi_\gamma}G).
\end{equation}
\end{lemma}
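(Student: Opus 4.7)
The plan is to reduce both isomorphisms to a single verification, using the equivalence of Theorem~2.4 to pass between them. Since $\phi_\gamma$ is the identity on underlying sets and commutes with every translation $\tau_u$, one has canonical isomorphisms $\roim{\phi_\gamma}\oim{\tau_u} \simeq \oim{\tau_u}\roim{\phi_\gamma}$ and $\opb{\phi_\gamma}\oim{\tau_u} \simeq \oim{\tau_u}\opb{\phi_\gamma}$, so that both sides of each formula are morphisms between the same pair of objects. Applying $\opb{\phi_\gamma}$ to \eqref{iso:gammamicro} with $F := \opb{\phi_\gamma}G$, which belongs to $\Derb_{\gamma^{\circ,a}}(\cor_\V)$ by Theorem~2.4, and using that $\opb{\phi_\gamma}\roim{\phi_\gamma} \simeq \id$ on this subcategory, yields \eqref{iso:microgamma}. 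It therefore suffices to prove \eqref{iso:gammamicro}.

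To establish \eqref{iso:gammamicro}, I compare both morphisms at the level of sections on the basis of $\gamma$-opens, after choosing a homotopically injective resolution of $F$ so that derived sections are computed termwise. A $\gamma$-open $U$ translated by any vector remains $\gamma$-open, so for $w \leq_\gamma v$ we have $\gamma$-opens $U+w \subset U+v$. By construction, $\chi^\gamma_{v,w}(\roim{\phi_\gamma}F)$ acts on $U$ as the restriction morphism $\Rg(U+v;F) \to \Rg(U+w;F)$. On the other hand, using Lemma~\ref{lem:transker}, the morphism $\chi^\mu_{v,w}(F)$ is identified with $\cor_{-v+\gamma^a}\sconv[np]F \to \cor_{-w+\gamma^a}\sconv[np]F$ induced by the canonical map of constant sheaves on closed subsets $\cor_{-v+\gamma^a} \to \cor_{-w+\gamma^a}$. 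Unwinding the definition of the convolution $\roim{s}((\cdot)\boxtimes F)$, the induced morphism on sections over $U$ coincides with the same restriction map.

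The main obstacle is the final identification in the previous paragraph: carefully tracking, through the isomorphism of Lemma~\ref{lem:transker}, how the convolution-level morphism becomes the restriction morphism of $\roim{\phi_\gamma}F$ on $\gamma$-opens. Once this coincidence is verified on the basis, naturality of $\chi^\gamma_{v,w}$ and $\chi^\mu_{v,w}$ together with the sheaf axiom upgrade it to the desired isomorphism of morphisms of sheaves, establishing \eqref{iso:gammamicro}, from which \eqref{iso:microgamma} follows by the first paragraph.
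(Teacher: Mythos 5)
Your reduction of \eqref{iso:microgamma} to \eqref{iso:gammamicro} is exactly the paper's. For \eqref{iso:gammamicro} you take a genuinely different route: you attempt to compute $\chi^\mu_{v,w}$ directly on sections over $\gamma$-opens by unwinding the convolution $\roim{s}(\cdot\boxtimes F)$ and matching the result against the restriction morphisms that define $\chi^\gamma_{v,w}$. The paper instead uses the adjunction $\opb{\tau_u}\dashv\oim{\tau_u}$ and the equivalence $\roim{\phi_\gamma}$ to transport $\roim{\phi_\gamma}\chi^\mu_{v,w}$ into a natural transformation $\Psi$ between the representable functors $\RHom_{\cor_{\V_\gamma}}(\cor_{U+v},\cdot)$ and $\RHom_{\cor_{\V_\gamma}}(\cor_{U+w},\cdot)$ on $\Derb(\cor_{\V_\gamma})$, then invokes the enriched Yoneda lemma to identify $\Psi$ with the transformation induced by the canonical map $\cor_{U+w}\to\cor_{U+v}$ --- which is precisely $\chi^\gamma_{v,w}$. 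The point of that maneuver is that once everything is packaged as a transformation of representables, one never has to trace the chain of isomorphisms in Lemma~\ref{lem:transker} through $\roim{s}$ explicitly; the Yoneda lemma reduces the matching to identifying a single inducing morphism.

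Your sketch, by contrast, stops exactly where the work is. You write that ``the induced morphism on sections over $U$ coincides with the same restriction map,'' you label this ``the main obstacle,'' and you do not carry it out. But this is the entire content of the lemma: one has to follow the composite
$\oim{\tau_v}F \simeq \cor_{-v+\gamma^a}\sconv[np] F \to \cor_{-w+\gamma^a}\sconv[np] F \simeq \oim{\tau_w}F$
through the three identifications in the proof of Lemma~\ref{lem:transker} (the isomorphism of Proposition~\ref{prop:gammaloc}, the equality $\tau_u\circ s = s\circ(\tau_u\times\id)$, and the computation of $\oim{\tau_u}\cor_{\gamma^a}$), apply $\roim{\phi_\gamma}$, and then verify that on $\Rg(U;\cdot)$ one recovers the restriction $\Rg(U+v;F)\to\Rg(U+w;F)$ for every $\gamma$-open $U$. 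None of this is automatic --- it involves pushing sections through $\roim{s}$ and a base-change argument --- and you have supplied none of it. As it stands the argument is a plan, not a proof; either fill in this verification, or replace it with the adjunction/Yoneda argument, which is precisely designed to avoid it.
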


\begin{proof}
Let $F \in \Derb_{\gamma^{\circ\,a}}(\cor_{\V})$ and $U$ be a $\gamma$-open set. Then we have the following commutative diagram

\begin{align*}
\xymatrix@C=4cm{
\RHom_{\cor_{\V_\gamma}}(\cor_U,\, \roim{\phi_\gamma} \oim{\tau_v} F) \ar[r]^-{\RHom_{\cor_{\V_\gamma}}(\cor_U,\roim{\phi_\gamma} \chi^\mu_{v,w})} \ar[d]^-{\wr} & \RHom_{\cor_{\V_\gamma}}(\cor_U,\roim{\phi_\gamma} \oim{\tau_w} F) \ar[d]^-{\wr}\\
\RHom_{\cor_{\V}}(\opb{\tau_v}\cor_U, F) \ar[d]^-{\wr} & \RHom_{\cor_{\V}}(\opb{\tau_w}\cor_U,F) \ar[d]^-{\wr}\\
\RHom_{\cor_{\V_\gamma}}(\cor_{U+v}, \roim{\phi_\gamma} F) \ar[r]^-{\Psi_F} & \RHom_{\cor_{\V_\gamma}}(\cor_{U+w}, \roim{\phi_\gamma}F).\\
}
\end{align*}
As $\roim{\phi_\gamma} \colon \Derb_{\gamma^{\circ\,a}}(\cor_\V) \to  \Derb(\cor_{\V_\gamma})$ is an equivalence of categories, the maps $(\Psi_F)_{F \in \Derb_{\gamma^{\circ\,a}}(\cor_\V)}$ provide  a natural transformation between the functors $\RHom_{\cor_{\V_\gamma}}(\cor_{U+v}, \cdot )$ and $\RHom_{\cor_{\V_\gamma}}(\cor_{U+w}, \cdot )$ by setting  for every $G \in \Derb(\cor_{\V_\gamma})$, $\xi_G:=\Psi_{\opb{\phi_\gamma}G}$. It follows from the enriched Yoneda lemma that the natural transformation $\xi$ is induced by the canonical map $\cor_{U+w} \to \cor_{U+v}$. Hence $\Phi_F$ is isomorphic to $\RHom_{\cor_{\V_\gamma}}(\cor_{U+v}, \chi^\gamma_{v,w} \roim{\phi_\gamma})$, which proves formula \eqref{iso:gammamicro}. 

Applying Formula \eqref{iso:gammamicro} to $\opb{\phi_\gamma}G$ and applying $\opb{\phi_\gamma}$ to both sides of the isomorphism, we obtain
\begin{equation*}
\opb{\phi_\gamma}\roim{\phi_\gamma} \chi^\mu_{v,w}(\opb{\phi_\gamma}G) \simeq \opb{\phi_\gamma} \chi^\gamma_{v,w} (\roim{\phi_\gamma}\opb{\phi_\gamma}G).
\end{equation*}
Finally, using that $\roim{\phi_\gamma} \opb{\phi_\gamma} \simeq \id$ and $\opb{\phi_\gamma} \roim{\phi_\gamma} \simeq \id$, we get the result.
\end{proof}

Let $F \in \Derb_{\gamma^{\circ\,a}}(\cor_{\V})$. Again, if $w=0$ and $v \in \gamma^a$, the morphism \eqref{mor:bismoothmu} provides the canonical morphism

\begin{equation*}\label{mor:smoothmu}
\chi^\mu_{v,0} \colon \oim{\tau_v}F \to  F.
\end{equation*}

\begin{remark}
Here, again, using Remark \ref{Rem:ab}, we obtain, for every $F \in \Mod_{\gamma^{\circ\,a}}(\cor_{\V_\gamma})$ and every $w\leq_\gamma v$, a canonical morphism $\oim{\tau_v}F \to \oim{\tau_w}F$ by setting $\chi^\mu_{v,w}(F):= \Hn^0(\chi^\mu_{v,w}(\iota(F))$.
\end{remark}

\subsection{Interleavings and distances}

Let $\cC$ be any of the following categories $\Der(\cor_{\V_\al})$, $\Der(\cor_{\V_\gamma})$, $\Derb_{\gamma^{\circ\,a}}(\cor_{\V_\gamma})$, $\Ch(\cor_{\V_\al})$, $\Ch(\cor_{\V_\gamma})$,  $\Mod(\cor_{\V_\al})$, $\Mod(\cor_{\V_\gamma})$, $\Mod_{\gamma^{\circ\,a}}(\cor_{\V_\gamma})$.

\begin{definition}
Let $F$, $G \in \cC$, and $v \in \gamma^a$. We say that $F$ and $G$ are $v$-interleaved if there exists $f \in \text{Hom}_\cC( \oim{\tau_v}F, G)$ and $g \in  \text{Hom}_\cC( \oim{\tau_v}G, F)$ such that the following diagram commutes.

\begin{equation*}\xymatrix{
\oim{\tau_{2v}}F  \ar[rd]\ar@/^0.7cm/[rr]^{\chi_{2v,0}(F)} \ar[r]^{\oim{\tau_v} f} & \oim{\tau_v} G  \ar[rd] \ar[r]^{g} & F \\
\oim{\tau_{2v}}G  \ar[ur]\ar@/_0.7cm/[rr]_{\chi_{2v,0}(G)} \ar[r]^{\oim{\tau_v} g} & \oim{\tau_v} F  \ar[ur] \ar[r]^{f} & G
   }
\end{equation*}
\end{definition}

\begin{definition}
With the same notations, define the interleaving distance between $F$ and $G$ with respect to $v\in \gamma^a$ to be : 
\begin{equation*}
d_I^v(F,G) := \inf(\{c \geq 0 \mid F ~\text{and}~G~\text{are}~c \cdot v-\text{interleaved}\} \cup \{ \infty \}).
\end{equation*}
\end{definition}

\begin{proposition}
The interleaving distance $d_I^v$ is a pseudo-extended metric on the objects of $\cC$, that is it satisfies for $F,G,H$ objects of $\cC$ :
\begin{enumerate}
    \item $d_I^v(F,G)\in \R_{\geq 0}\cup \{\infty\}$
    \item $d_I^v(F,G)=d_I^v(G,F)$
    \item $d_I^v(F,H)\leq d_I^v(F,G) + d_I^v(G,H)$
\end{enumerate}
\end{proposition}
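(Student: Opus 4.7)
The plan is to treat the three properties in increasing order of difficulty. Property (1) is tautological: the set $\{c \geq 0 \mid F \text{ and } G \text{ are } c\cdot v\text{-interleaved}\} \cup \{\infty\}$ is a subset of $[0,\infty]$, hence its infimum lies in $\R_{\geq 0}\cup\{\infty\}$. Property (2) follows by inspection of the interleaving diagram: swapping the roles of $F,G$ and of $f,g$ yields the same diagram, so $F$ and $G$ are $c\cdot v$-interleaved if and only if $G$ and $F$ are.

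The heart of the proof is property (3). The plan is to show the following key lemma: \emph{if $F,G$ are $a\cdot v$-interleaved and $G,H$ are $b\cdot v$-interleaved, then $F,H$ are $(a+b)\cdot v$-interleaved}. Fix interleaving data $f_1\colon \oim{\tau_{av}}F\to G$, $g_1\colon \oim{\tau_{av}}G\to F$, $f_2\colon \oim{\tau_{bv}}G\to H$, $g_2\colon \oim{\tau_{bv}}H\to G$. Using the identification $\oim{\tau_{(a+b)v}}\simeq \oim{\tau_{bv}}\oim{\tau_{av}}\simeq \oim{\tau_{av}}\oim{\tau_{bv}}$, define
\begin{equation*}
f := f_2\circ \oim{\tau_{bv}}f_1\colon \oim{\tau_{(a+b)v}}F\to H,\qquad
g := g_1\circ \oim{\tau_{av}}g_2\colon \oim{\tau_{(a+b)v}}H\to F.
\end{equation*}
The triangle inequality then reduces to verifying the two commuting diagrams for $(F,H,f,g)$ with smoothing $\chi_{2(a+b)v,0}$.

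The main (though routine) obstacle is checking these two diagrams. For the first, one computes
\begin{equation*}
g\circ \oim{\tau_{(a+b)v}}f
= g_1\circ \oim{\tau_{av}}\bigl(g_2\circ \oim{\tau_{bv}}f_2\bigr)\circ \oim{\tau_{(a+2b)v}}f_1
= g_1\circ \oim{\tau_{av}}\chi_{2bv,0}(G)\circ \oim{\tau_{(a+2b)v}}f_1,
\end{equation*}
invoking the $b\cdot v$-interleaving of $G,H$. Then the naturality of the transformation $\chi_{2bv,0}\colon \oim{\tau_{2bv}}\Rightarrow \id$ applied to the morphism $f_1$ lets one slide $\chi_{2bv,0}$ past $f_1$, yielding
\begin{equation*}
g\circ \oim{\tau_{(a+b)v}}f = \bigl(g_1\circ \oim{\tau_{av}}f_1\bigr)\circ \chi_{2bv,0}(\oim{\tau_{2av}}F) = \chi_{2av,0}(F)\circ \chi_{2bv,0}(\oim{\tau_{2av}}F).
\end{equation*}
Finally, the composition of restriction-type morphisms satisfies $\chi_{u,0}(X)\circ \chi_{w,0}(\oim{\tau_u}X) = \chi_{u+w,0}(X)$ (at the level of open sets, both reduce to the single restriction $\rho_{U+u+w,U}$), and extending this to complexes via injective resolutions yields $\chi_{2(a+b)v,0}(F)$ as required. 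The second diagram is symmetric.

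To conclude (3), given $\varepsilon>0$ pick $a$ and $b$ with $a<d_I^v(F,G)+\varepsilon/2$ and $b<d_I^v(G,H)+\varepsilon/2$ such that $F,G$ are $a\cdot v$-interleaved and $G,H$ are $b\cdot v$-interleaved. By the lemma, $F,H$ are $(a+b)\cdot v$-interleaved, so $d_I^v(F,H)\leq a+b<d_I^v(F,G)+d_I^v(G,H)+\varepsilon$. Letting $\varepsilon\to 0$ gives the triangle inequality. The cases where either distance is $\infty$ are immediate.
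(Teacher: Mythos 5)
Your proof is correct, and since the paper states this proposition without supplying its own argument, there is nothing to compare against: the standard route you take (tautology for (1), symmetry of the interleaving diagram for (2), and composition of an $a\cdot v$-interleaving with a $b\cdot v$-interleaving to produce an $(a+b)\cdot v$-interleaving for (3)) is exactly the argument the paper is implicitly relying on. The two places a careless writer might stumble --- the naturality of $\chi_{2bv,0}$ used to slide it past $f_1$, and the additivity $\chi_{u,0}(X)\circ\chi_{w,0}(\oim{\tau_u}X)=\chi_{u+w,0}(X)$ --- are both handled correctly, and both follow directly from the paper's definition of $\chi^\al_{v,w}(F)_U=\rho_{U+v,U+w}$ (extended to the derived setting by the homotopically injective resolution argument the paper uses to define $\chi$ in the first place).
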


\begin{notation}
We write $d_{I_\al}^v$ for the interleaving distance on $\Der(\cor_{\V_\al})$, $d_{I_\gamma}^v$ for the interleaving distance on $\Der(\cor_{\V_\gamma})$,  $d_{I_\mu}^v$ for the interleaving distance on $\Derb_{\gamma^{\circ\,a}}(\cor_{\V_\gamma})$. We write $d_{I^{\mathrm{ab}}_\al}^v$ for the interleaving distance on $\Mod(\cor_{\V_\al})$ and use similar notation in the cases of $\Mod(\cor_{\V_\gamma})$ and $\Mod_{\gamma^{\circ\,a}}(\cor_{\V_\gamma})$.
\end{notation}

\begin{remark}
Again, here we focus on the derived case as the abelian one can be deduced from the derived one by using Remark \ref{Rem:ab}. 

Note that, again, the results stated, in this paper, for the unbounded derived categories $\Der(\cor_{\V_\gamma})$ and $\Der(\cor_{\V_\al})$ also hold for their bounded counterparts $\Derb(\cor_{\V_\gamma})$ and $\Derb(\cor_{\V_\al})$ as they are full subcategories of the formers and all the functors considered have finite cohomological dimension (see Proposition \ref{prop:cohbound}) and the interleaving distances on $\Derb(\cor_{\V_\gamma})$ and $\Derb(\cor_{\V_\al})$ are equal to the restrictions of the interleaving distances on $\Der(\cor_{\V_\gamma})$ and $\Der(\cor_{\V_\al})$.
\end{remark}

\subsubsection{Interleavings and ephemeral modules}
This subsection is dedicated to the study of the relations between the notions of interleavings and ephemeral modules. We characterize ephemeral modules in terms of interleavings. Once again, we concentrate our attention on the derived setting as the abelian case can be deduced from the derived one by using Remark \ref{Rem:ab}.

\begin{proposition}\label{prop:inter}
Let $F$ and $G$ in $\Der(\cor_{\V_\al})$. The set
\begin{equation*}
\Inter(F,G) = \{v\in \Int(\gamma^a) \mid F~\text{and}~G~\text{are}~v-\text{interleaved}\}
\end{equation*}
is Alexandrov-closed.
\end{proposition}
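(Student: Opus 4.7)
The plan is to identify ``Alexandrov-closed'' with an order-theoretic property, construct a $w$-interleaving from the given $v$-interleaving by post-composing with smoothing morphisms, and verify the resulting diagram by naturality.

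First, recall that the closed sets of $\V_\al$ are precisely the upper sets for $\leq_\gamma$. Hence $\Inter(F,G)$ is Alexandrov-closed if and only if, whenever $v \in \Inter(F,G)$, $w \in \Int(\gamma^a)$ and $v \leq_\gamma w$, one has $w \in \Inter(F,G)$. Writing $u := w - v$, the inequality $v \leq_\gamma w$ is equivalent to $u \in \gamma^a$, and the condition $w \in \Int(\gamma^a)$ is automatic since $\Int(\gamma^a) + \gamma^a \subset \Int(\gamma^a)$. Thus it suffices to show: \emph{if $F$ and $G$ are $v$-interleaved with $v \in \Int(\gamma^a)$ and $u \in \gamma^a$, then $F$ and $G$ are $(v+u)$-interleaved}.

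Given a $v$-interleaving $(f,g)$ with $f\colon \oim{\tau_v}F \to G$ and $g\colon \oim{\tau_v}G \to F$, the natural candidates for a $w$-interleaving (with $w = v + u$) are obtained by smoothing by $u$:
\begin{equation*}
\tilde f := \chi^\al_{u,0}(G) \circ \oim{\tau_u}(f) \colon \oim{\tau_w}F \to G, \qquad
\tilde g := \chi^\al_{u,0}(F) \circ \oim{\tau_u}(g) \colon \oim{\tau_w}G \to F.
\end{equation*}

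The verification that $\tilde g \circ \oim{\tau_w}(\tilde f) = \chi^\al_{2w,0}(F)$, and symmetrically $\tilde f \circ \oim{\tau_w}(\tilde g) = \chi^\al_{2w,0}(G)$, rests on three elementary ingredients that follow directly from the construction of $\chi^\al$ via restriction morphisms: the naturality of $\chi^\al_{u,0}\colon \oim{\tau_u} \to \id$ (so that e.g.\ $\chi^\al_{u,0}(F)\circ \oim{\tau_u}(g) = g \circ \oim{\tau_v}\chi^\al_{u,0}(G)$), the compatibility $\oim{\tau_a}\chi^\al_{b,0}(H) = \chi^\al_{a+b,a}(H)$ for $a,b \in \gamma^a$, and the composition law $\chi^\al_{a+b,0} = \chi^\al_{a,0} \circ \oim{\tau_a}\chi^\al_{b,0}$. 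Chaining these identities with the assumed $v$-interleaving relation $g\circ \oim{\tau_v}(f) = \chi^\al_{2v,0}(F)$ yields
\begin{equation*}
\tilde g \circ \oim{\tau_w}(\tilde f) \;=\; \chi^\al_{2v,0}(F) \circ \oim{\tau_{2v}}\chi^\al_{2u,0}(F) \;=\; \chi^\al_{2w,0}(F),
\end{equation*}
and the symmetric identity for $\tilde f \circ \oim{\tau_w}(\tilde g)$. The sole obstacle is careful bookkeeping of which translation $\oim{\tau_\bullet}$ is applied to which morphism, together with the commutation $\oim{\tau_a}\oim{\tau_b} = \oim{\tau_{a+b}}$; no deeper structure intervenes, and the same argument applies verbatim to each of the categories listed in the definition of $\cC$.
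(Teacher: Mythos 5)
Your proof is correct and takes essentially the same approach as the paper: both reduce Alexandrov-closedness to stability under adding elements of $\gamma^a$, and both build the $(v+u)$-interleaving by post-composing the translated $v$-interleaving maps with the smoothing morphisms $\chi^\al_{u,0}$. The paper verifies commutativity via an explicit diagram where you spell out the relevant naturality and composition identities for $\chi^\al$, but the argument is the same.
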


\begin{proof}
It is sufficient to show that $\Inter(F,G)+\gamma^a=\Inter(F,G)$. The inclusion $\Inter(F,G) \subset \Inter(F,G)+\gamma^a$ is clear. We prove the reverse inclusion. Let $w \in \gamma^a$ and $v \in \Inter(F,G)$. Let
\begin{align*}
f \colon \oim{\tau_v} F \to G && g \colon \oim{\tau_v} G \to F
\end{align*}
be a $v$-interleaving between $F$ and $G$. The maps
\begin{align*}
\oim{\tau_{v+w}} F \stackrel{\oim{\tau_w}f}{\longrightarrow} \oim{\tau_w} G \stackrel{\chi^\al_{w,0}}{\longrightarrow} G && \oim{\tau_{v+w}} G \stackrel{\oim{\tau_w}g}{\longrightarrow} \oim{\tau_w} F \stackrel{\chi^\al_{w,0}}{\longrightarrow} F
\end{align*}
provides a $v+w$ interleaving between $F$ and $G$ since the following diagram
\begin{equation*}
\xymatrix@C=1.8cm@R=1.8cm{
\oim{\tau_{2(v+w)}} F \ar[r]^{\oim{\tau_{v+2w}}f} \ar@{=}[d]
&\oim{\tau_{v+2w}}G \ar[r]^{\chi^\al_{v+2w,v+w}} \ar@{=}[d]
&\oim{\tau_{v+w}}G \ar[r]^{\oim{\tau_\w}g}
& \oim{\tau_{w}}F  \ar[r]^{\chi_{w,0}}
& F \ar@{=}[d]\\
\oim{\tau_{2(v+w)}} F \ar[r]^{\oim{\tau_{2w}}\oim{\tau_{v}}f} \ar@/_2pc/[rrrr]_-{\chi^\al_{2(v+w),0}}
&\oim{\tau_{v+2w}}G \ar[r]^{\oim{\tau_{2w}}g} \ar[ru]_-{\chi^\al_{v+2w,v+w}}
&\oim{\tau_{2w}}F \ar[rr]^{\chi_{2w,0}} \ar[ru]_-{\chi^\al_{2w,w}}
&
& F
}
\end{equation*}
and its analogue with $F$ and $G$ interchanged are commutative. 
\end{proof}

\begin{corollary}
Let $v, w \in \gamma^a$ and assume that $v \geq_{\gamma^a} w$. Then,
\begin{equation*}
d_{I_\al}^v \geq d_{I_\al}^w. 
\end{equation*}
\end{corollary}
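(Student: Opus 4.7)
The plan is to unwind the order $v \geq_{\gamma^a} w$, convert the inequality of distances into an inclusion of the sets of parameters for which an interleaving exists, and then reuse the construction from the proof of Proposition~\ref{prop:inter}.

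First, recall from the definition that $x \leq_\gamma y$ is equivalent to $x - y \in \gamma$, so the hypothesis $v \geq_{\gamma^a} w$ unwinds to $w - v \in \gamma^a$. Write $u := w - v \in \gamma^a$; for any $c \geq 0$ we then have $cw = cv + cu$ with $cu \in \gamma^a$, since $\gamma^a$ is a cone.

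Next, suppose $F$ and $G$ are $cv$-interleaved via morphisms
\begin{equation*}
f \colon \oim{\tau_{cv}} F \to G, \qquad g \colon \oim{\tau_{cv}} G \to F,
\end{equation*}
satisfying the interleaving diagram. Following verbatim the construction in the proof of Proposition~\ref{prop:inter} (with $cu \in \gamma^a$ playing the role of $w$ there), one defines
\begin{equation*}
\oim{\tau_{cw}} F \xrightarrow{\ \oim{\tau_{cu}} f\ } \oim{\tau_{cu}} G \xrightarrow{\ \chi^\al_{cu,0}\ } G,
\end{equation*}
and symmetrically with $F$ and $G$ swapped. The same two commuting squares that appear in the proof of Proposition~\ref{prop:inter} then show that this pair defines a $cw$-interleaving between $F$ and $G$. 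Thus every $cv$-interleaving yields a $cw$-interleaving for the same scalar $c$.

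Finally, this gives the inclusion
\begin{equation*}
\{\, c \geq 0 \mid F \text{ and } G \text{ are } cv\text{-interleaved} \,\}
\subseteq
\{\, c \geq 0 \mid F \text{ and } G \text{ are } cw\text{-interleaved} \,\},
\end{equation*}
and taking infima yields $d_{I_\al}^w(F,G) \leq d_{I_\al}^v(F,G)$ for all $F$ and $G$, which is the claimed inequality. The only mild pitfall is bookkeeping: one must verify that $v \geq_{\gamma^a} w$ really produces $cu = c(w-v) \in \gamma^a$ (and not in $\gamma$), so that the smoothing morphism $\chi^\al_{cu,0} \colon \oim{\tau_{cu}} (\cdot) \to (\cdot)$ exists and points in the direction needed to assemble the new interleaving; once this sign is correct, the proof is a direct corollary of Proposition~\ref{prop:inter} and requires no further ingredients. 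The argument is insensitive to whether $v,w$ lie in $\Int(\gamma^a)$ or on its boundary, since the only thing used about $\gamma^a$ is that it is a convex cone closed under addition.
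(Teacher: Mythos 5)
Your proof is correct and is essentially the argument the paper intends: the corollary is stated without proof, and the intended deduction is exactly the one you give, namely that $v \geq_{\gamma^a} w$ unwinds to $w = v + u$ with $u = w - v \in \gamma^a$, so the construction in the proof of Proposition~\ref{prop:inter} upgrades any $cv$-interleaving to a $cw$-interleaving, yielding the containment of parameter sets and hence the inequality of infima. You are also right to invoke the proof of Proposition~\ref{prop:inter} rather than only its statement (the statement is phrased for $\Inter(F,G) \subset \Int(\gamma^a)$, whereas the corollary allows $v,w$ on the boundary of $\gamma^a$), and your sign check that $cu \in \gamma^a$ (so that $\chi^\al_{cu,0}$ exists) is the correct thing to verify.
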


\begin{remark}
The proof of Proposition \ref{prop:inter} proves also that for $F, \;G \in \Der(\cor_{\V_\gamma})$, 
\begin{equation*}
\Inter(F,G) + \gamma^a = \Inter(F,G). 
\end{equation*}
Hence, if $v \geq_{\gamma^a} w$. Then, $d_{I_\gamma}^v \geq d_{I_\gamma}^w$.
\end{remark}

The following lemma is immediate.

\begin{lemma}\label{lem:ephzerointer}
Let $F\in \Der(\cor_{\V_\al})$ and $v \in \Int(\gamma^a) $. Then $F$ is $v$-interleaved with $0$ if and only if the canonical morphism $\chi^\al_{2v,0}(F) \colon \oim{\tau_{2v}}F \to F$ is null. \end{lemma}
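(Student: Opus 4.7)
The plan is to unpack the definition of $v$-interleaving when one of the two objects is the zero object, and observe that the interleaving diagrams then collapse to the single condition $\chi^\al_{2v,0}(F)=0$.

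First I would note that for any object $H\in\Der(\cor_{\V_\al})$, the Hom-sets $\Hom(H,0)$ and $\Hom(0,H)$ contain only the zero morphism. Hence if we look for a $v$-interleaving between $F$ and $G=0$, the data of morphisms $f\in\Hom(\oim{\tau_v}F,0)$ and $g\in\Hom(\oim{\tau_v}0,F)$ is forced: both must be the zero morphism, and there is no actual choice to make. In particular, the functoriality of $\oim{\tau_v}$ forces $\oim{\tau_v}f=0$ and $\oim{\tau_v}g=0$.

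Next, I would inspect the two commutative diagrams defining a $v$-interleaving. The second diagram, involving only $G=0$, says that $\chi^\al_{2v,0}(0)\colon \oim{\tau_{2v}}0\to 0$ agrees with the composition $f\circ\oim{\tau_v}g$; both sides are automatically zero, so this diagram is vacuous. The first diagram says that $\chi^\al_{2v,0}(F)$ equals the composition
\begin{equation*}
\oim{\tau_{2v}}F \xrightarrow{\oim{\tau_v}f}\oim{\tau_v}0 \xrightarrow{g} F,
\end{equation*}
and since $f=0$ (hence $\oim{\tau_v}f=0$), this composition is zero. Therefore a $v$-interleaving between $F$ and $0$ exists if and only if $\chi^\al_{2v,0}(F)=0$, which is exactly the desired equivalence.

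I do not anticipate any real obstacle: the argument is essentially the observation that ``interleaved with zero'' collapses the defining diagram to a single equation, since all morphisms to or from the zero object are zero. The assumption $v\in\Int(\gamma^a)$ plays no role in the logical argument itself (it simply ensures that the statement is being made in the regime relevant to the rest of the section), so I would not need to invoke it in the proof.
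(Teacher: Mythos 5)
Your proof is correct and is essentially the same argument as the paper's, just spelled out in more detail: the paper simply notes that if $\chi^\al_{2v,0}(F)=0$ then it factors through zero giving an interleaving, and conversely that the interleaving diagram directly forces $\chi^\al_{2v,0}(F)=0$ since all maps to and from the zero object vanish.
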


\begin{proof}
If $\chi^\al_{2v,0}(F)$ is zero, it factors through zero and $F$ is $v$-interleaved with zero. The converse follows directly from the definition of a $v$-interleaving. 
\end{proof}

\begin{proposition}\label{prop:ephintercone}
Let $F\in \Mod(\cor_{\V_\al})$, then $F$ is ephemeral if and only if  
\begin{equation*}
\Inter(F,0) = \Int(\gamma^a).
\end{equation*}
\end{proposition}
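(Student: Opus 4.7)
The plan is to use Lemma \ref{lem:ephzerointer} to rephrase $\Inter(F,0) = \Int(\gamma^a)$ as: for every $v \in \Int(\gamma^a)$, the canonical morphism $\chi^\al_{2v,0}(F)\colon \oim{\tau_{2v}}F \to F$ is the zero morphism of sheaves. Since a morphism of Alexandrov sheaves is null if and only if it is null on a basis, and the basis $\{x+\gamma\}_{x \in \V}$ reduces $\chi^\al_{2v,0}(F)$ to the restriction $F(x+\gamma+2v) \to F(x+\gamma)$, the statement to prove becomes: $F$ is ephemeral if and only if for every $x \in \V$ and every $v \in \Int(\gamma^a)$, the restriction $F(x+\gamma+2v) \to F(x+\gamma)$ vanishes.

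For the direct implication, I would introduce, for fixed $x \in \V$ and $v \in \Int(\gamma^a)$, the auxiliary set $U_0 := x + v + \Int(\gamma)$. This is a $\gamma$-open (it is open in the usual topology and $\gamma$-invariant, since $\Int(\gamma)+\gamma \subset \Int(\gamma)$). Using $-v \in \Int(\gamma)$, a short cone computation yields the chain of inclusions
\begin{equation*}
x + \gamma \subset x + v + \Int(\gamma) \subset x + \gamma + 2v.
\end{equation*}
By definition of ephemeral, $F(U_0)=0$, so the restriction $F(x+\gamma+2v) \to F(x+\gamma)$ factors through $F(U_0)$ and hence is zero.

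For the converse, let $W$ be an arbitrary $\gamma$-open. Since $W$ is $\gamma$-invariant, each basic Alexandrov open $y+\gamma$ with $y \in W$ is contained in $W$, so these form a cover. The separatedness of $F$ then gives an injection $F(W) \hookrightarrow \prod_{x \in W} F(x+\gamma)$, and it suffices to check that each restriction $F(W) \to F(x+\gamma)$ is zero. Since $W$ is $\gamma$-open, we can write $W = \bigcup_{y \in W} y + \Int(\gamma)$, so we may choose $y \in W$ with $g := x - y \in \Int(\gamma)$. Then $x+\gamma \subset y+\gamma \subset W$, and the middle restriction $F(y+\gamma) \to F(x+\gamma)$ equals $(\chi^\al_{2v,0}(F))_{x+\gamma}$ for $v := -g/2 \in \Int(\gamma^a)$, which is zero by hypothesis. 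Factoring $F(W) \to F(y+\gamma) \to F(x+\gamma)$ yields the claim.

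The only delicate point is the bookkeeping: checking the chain of inclusions $x+\gamma \subset x+v+\Int(\gamma) \subset x+\gamma+2v$ in the direct direction, and matching the sign $v = -g/2 \in \Int(\gamma^a)$ with the shift $2v = -g$ in the converse. Both are short verifications with the cone axioms. Finally, the statement is for $F$ in the abelian category $\Mod(\cor_{\V_\al})$, but by Remark \ref{Rem:ab} the argument carries over from the derived version of Lemma \ref{lem:ephzerointer} without modification.
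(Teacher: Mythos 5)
Your proof is correct and follows essentially the same strategy as the paper's: for the direct implication you sandwich an Alexandrov-open between a $\gamma$-open with vanishing sections and its shift by $2v$, and for the converse you use the sheaf axiom to reduce to restriction maps which vanish by Lemma \ref{lem:ephzerointer}. The only cosmetic differences are that you work with basic opens $x+\gamma$ rather than a general $U\in\Op(\V_\al)$ in the direct implication, and in the converse you verify $F(W)=0$ for all $\gamma$-opens via separatedness rather than checking $F(x+\Int(\gamma))=0$ on the basis via the limit formula — but these are the same argument.
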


\begin{proof}
\noindent (i) Assume $F$ is ephemeral. Let $v \in \Int(\gamma^a)$ and $U$ be an object of $\Op(\V_\al)$. We have the following sequence of inclusion
\begin{equation*}
U \subset U + \Int(\gamma) + v \subset U + 2v
\end{equation*}
and $U + \Int(\gamma) + v \in \Op(\V_\gamma)$. Hence 
$\Gamma(U + \Int(\gamma) + v ;F)\simeq 0$. It follows that $\chi^\al_{2v,0}(F) \colon \oim{\tau_{2v}}F \to F$  factors through zero. This implies that $v \in \Inter(F,0)$.

\noindent (ii) Assume that $\Inter(F,0)=\Int(\gamma^a)$. Let us show that $\oim{\beta} F \simeq 0$. It is sufficient to show that for every $x \in \V$, $F(x+\Int(\gamma))\simeq 0$. Let $x \in \V$. Then,
\begin{equation}\label{mor:nulliso}
\lim_{u+\gamma \subset x+\Int(\gamma)} \rho_{x+ \Int(\gamma),u} \colon F(x+\Int(\gamma)) \stackrel{\sim}{\to} \lim_{u+\gamma \subset x+\Int(\gamma)}F(u+\gamma).
\end{equation}
Let $u \in x+ \Int(\gamma)$, there exists $v \in  \Int(\gamma^a)$ such that $x=u+3v$ and by assumption
\begin{equation*}
\chi^\al_{2v,0}(F) \colon \oim{\tau_{2v}}F \to F
\end{equation*}
factor through zero. Thus, we have the following commutative diagram
\begin{equation*}
\xymatrix{ F(u+3v+\Int(\gamma)) \ar[r]^-{\chi^\al_{2v,0}(F)} \ar[rd]_-{\rho_{x+\Int(\gamma),u}}
& F(u+v+\Int(\gamma)) \ar[d]^-{\rho_{u+v+\Int(\gamma),u}}\\
&F(u+\gamma).
}
\end{equation*}
Hence, the restriction map $\rho_{x+ \Int(\gamma),u}$ is zero. This implies that the isomorphism \eqref{mor:nulliso} is null. It follows that $F(x+\Int(\gamma))\simeq 0$ which proves the claim.
\end{proof}

\begin{corollary}\label{cor:ephinter}
Let $F\in \Der(\cor_{\V_\al})$, then $F$ is ephemeral if and only if  
\begin{equation*}
\Inter(F,0) = \Int(\gamma^a).
\end{equation*}
\end{corollary}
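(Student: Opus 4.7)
The plan is to lift Proposition \ref{prop:ephintercone} to the derived setting. The reverse implication will be immediate from its abelian counterpart. By Lemma \ref{lem:ephzerointer}, the hypothesis $\Inter(F,0) = \Int(\gamma^a)$ says $\chi^\al_{2v,0}(F) = 0$ in $\Der(\cor_{\V_\al})$ for every $v \in \Int(\gamma^a)$. Since $\tau_{2v}$ is a homeomorphism of $\V_\al$, the functor $\oim{\tau_{2v}}$ is exact and commutes with cohomology, so applying $\Hn^i$ yields $\chi^\al_{2v,0}(\Hn^i(F)) = 0$ in $\Mod(\cor_{\V_\al})$ for every $i$. The abelian versions of Lemma \ref{lem:ephzerointer} and Proposition \ref{prop:ephintercone} (available via Remark \ref{Rem:ab}) then force each $\Hn^i(F)$ to be ephemeral, whence $F \in \Der_\Eph(\cor_{\V_\al})$.

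For the forward implication, I fix $v \in \Int(\gamma^a)$ and aim to prove $\chi^\al_{2v,0}(F) = 0$ in $\Der(\cor_{\V_\al})$; Lemma \ref{lem:ephzerointer} then gives $v \in \Inter(F,0)$. The strategy is to mimic the abelian argument, which factors the relevant restriction through the $\gamma$-open set $\Int(U) + v$ on which an ephemeral sheaf vanishes; in the derived setting this pointwise vanishing must be replaced by a \emph{natural} null-homotopy. To produce one, I pick a K-injective resolution $\cI$ of $F$ in $\Mod(\cor_{\V_\al})$. Since $\opb{\beta}$ is exact, its right adjoint $\oim{\beta}$ preserves injectives, so $\oim{\beta}\cI$ is a K-injective complex on $\V_\gamma$, quasi-isomorphic to $\oim{\beta}F \simeq 0$, and therefore null-homotopic as a complex of sheaves on $\V_\gamma$. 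I then fix a chain contraction $h\colon \oim{\beta}\cI \to (\oim{\beta}\cI)[-1]$ with $dh + hd = \id$.

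The geometric input of the abelian proof then transports verbatim: for any Alexandrov-open $U$, the inclusions $U \subset \Int(U) + v \subset U + 2v$ (valid since $-v \in \Int(\gamma)$ and $U$ is $\gamma$-invariant) factor the restriction $\cI(U+2v) \to \cI(U)$ through $\cI(\Int(U)+v) = (\oim{\beta}\cI)(\Int(U)+v)$. Setting
\[
H_U \;=\; \rho_{\Int(U)+v,\,U} \circ h_{\Int(U)+v} \circ \rho_{U+2v,\,\Int(U)+v},
\]
naturality of $h$ and of the restriction maps along the functorial assignment $U \mapsto \Int(U)+v$ will show that $(H_U)_U$ assembles into a morphism of sheaves on $\V_\al$. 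A direct computation using that the restrictions are chain maps and $dh+hd=\id$ will then give $dH_U + H_U d = \rho_{\Int(U)+v,U} \circ \rho_{U+2v,\Int(U)+v} = (\chi^\al_{2v,0}(\cI))_U$, so that $H$ is a chain homotopy between $\chi^\al_{2v,0}(\cI)$ and $0$, and $\chi^\al_{2v,0}(F) = 0$ in $\Der(\cor_{\V_\al})$.

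The principal obstacle is exactly this upgrade from sectionwise acyclicity to a sheaf-level null-homotopy: acyclicity of $\oim{\beta}\cI$ alone would only provide contractions on sections, with no naturality in $U$. The K-injectivity of $\oim{\beta}\cI$---inherited from $\cI$ via the right adjoint of the exact functor $\opb{\beta}$---is what produces $h$ globally at the sheaf level, at which point the geometric factorization through $\gamma$-opens assembles into a coherent chain homotopy.
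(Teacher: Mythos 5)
Your proof is correct and follows essentially the same route as the paper's: the reverse direction reduces to the abelian Proposition \ref{prop:ephintercone} by taking cohomology sheaves, and the forward direction passes through a K-injective resolution and uses that an ephemeral $F$ forces $\oim{\beta}\cI$ to be an acyclic K-injective complex, hence null-homotopic. The paper states the forward step very tersely (``step (i) extends to $\Ch(\cor_{\V_\al})$''), and you correctly identify and fill in the point it glosses over --- that pointwise vanishing must be upgraded to a global, natural null-homotopy, obtained because $\oim{\beta}$ (right adjoint to the exact $\opb{\beta}$) preserves K-injectivity. One small simplification worth noting: rather than assembling the contraction section by section over $\Int(U)+v$, you can invoke Proposition \ref{prop:factorprop}(i), which already factors $\chi^\al_{2v,0}(\cI)$ at the chain level as $\oim{\tau_{2v}}\cI \to \oim{\tau_v}\opb{\beta}\oim{\beta}\cI \to \cI$; since the middle term is null-homotopic, so is the composite, giving the vanishing in $\Der(\cor_{\V_\al})$ without the hand-built $H_U$.
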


\begin{proof}
\noindent (i) Assume $F$ is ephemeral and consider an homotopically injective replacement $\cI$ of it. Then considering $\cI$ as an object of $\Ch(\cor_{\V_\al})$ and noticing that step (i) of the proof of Proposition \ref{prop:ephintercone} extends to $\Ch(\cor_{\V_\al})$ proves the claim.

\noindent (ii) Assume that $\Inter(F,0)=\Int(\gamma^a)$. Then for every $i \in \Z$, $\Inter(\Hn^i(F),0)=\Int(\gamma^a)$. Then the results follow from Proposition \ref{prop:ephintercone}.
\end{proof}

\begin{corollary}\label{cor:effdis}
	Let $v \in \Int(\gamma^a)$. Then $F \in \Der_{\Eph}(\cor_{\V_\al})$ if and only if $d^v_{I_\al}(F,0)=0$.
\end{corollary}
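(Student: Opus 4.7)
The statement is essentially a reformulation of Corollary \ref{cor:ephinter} in the language of the interleaving pseudo-distance, so the plan is to combine that corollary with the cone-stability of $\Inter(F,0)$ established in Proposition \ref{prop:inter}.

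For the forward direction, suppose $F \in \Der_\Eph(\cor_{\V_\al})$. By Corollary \ref{cor:ephinter}, $\Inter(F,0) = \Int(\gamma^a)$. Since $v \in \Int(\gamma^a)$ and $\Int(\gamma^a)$ is stable under multiplication by positive scalars, $cv \in \Inter(F,0)$ for every $c>0$, i.e.\ $F$ and $0$ are $cv$-interleaved for every $c>0$. Taking the infimum over $c>0$ yields $d^v_{I_\al}(F,0)=0$.

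For the converse, assume $d^v_{I_\al}(F,0)=0$. By Corollary \ref{cor:ephinter} it suffices to show that $\Inter(F,0) = \Int(\gamma^a)$. Pick an arbitrary $w \in \Int(\gamma^a)$. Because $\Int(\gamma^a)$ is open and contains $w$, there exists $c_0 > 0$ such that $w - cv \in \Int(\gamma^a) \subset \gamma^a$ for every $0 < c \leq c_0$. On the other hand, since $d^v_{I_\al}(F,0)=0$, the defining infimum can be realized below $c_0$: there exists some $c \in (0, c_0]$ such that $cv \in \Inter(F,0)$. Invoking the identity $\Inter(F,0) + \gamma^a = \Inter(F,0)$ proved in Proposition \ref{prop:inter}, we conclude
\begin{equation*}
w = cv + (w - cv) \in \Inter(F,0) + \gamma^a = \Inter(F,0).
\end{equation*}
As $w$ was arbitrary in $\Int(\gamma^a)$, this gives $\Inter(F,0) = \Int(\gamma^a)$, and Corollary \ref{cor:ephinter} yields $F \in \Der_\Eph(\cor_{\V_\al})$.

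There is no real obstacle here: the only point worth double-checking is that one may choose the interleaving parameter $c$ simultaneously small enough to satisfy both $cv \in \Inter(F,0)$ and $w-cv\in\gamma^a$, which is a direct consequence of the openness of $\Int(\gamma^a)$ at $w$ combined with the definition of the infimum.
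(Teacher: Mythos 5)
Your proof is correct and follows essentially the same route as the paper's: both directions reduce to Corollary~\ref{cor:ephinter}, and the converse combines the openness of $\Int(\gamma^a)$ at the target point with the cone-stability $\Inter(F,0)+\gamma^a=\Inter(F,0)$ from Proposition~\ref{prop:inter}. If anything, your phrasing of the converse is slightly cleaner, since you only need one $c\in(0,c_0]$ with $cv\in\Inter(F,0)$ (immediate from the infimum being $0$), whereas the paper asserts without comment that $\varepsilon v\in\Inter(F,0)$ for \emph{all} $\varepsilon>0$, which itself requires the cone-stability argument to justify.
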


\begin{proof}
	Assume $F \in \Der_{\Eph}(\cor_{\V_\al})$, then  $d^v_{I_\al}(F,0)=0$ is a direct consequence of Corollary \ref{cor:ephinter}.
	Let us prove the converse. Since $d^v_{I_\al}(F,0)=0$, for all $\varepsilon>0$, $\varepsilon\cdot v \in \Inter(F,0)$. Let us prove that $\Inter(F,0) = \Int(\gamma^a)$. Let $u \in \Int(\gamma^a)$. Since $\Int(\gamma^a)$ is open for the euclidean topology, there exists $\eta>0$ such that $u-\eta\cdot v \in \Int(\gamma^a)$. Remark that $u = \eta\cdot v + (u-\eta\cdot v)$. The first element of the sum belongs to $\Inter(F,0)$, and the second to $\Int(\gamma^a)$. By Proposition \ref{prop:inter}, $\Inter(F,0)$ is Alexandrov-closed, hence stable under addition by elements of $\Int(\gamma^a)$. This ends the proof.
\end{proof}

\subsubsection{Isometry theorems}

We prove that there is an isometry between the category of Alexandrov sheaves and the category of $\gamma$-sheaves both of them endowed with their respective version of the interleaving distance.

\begin{proposition}\label{prop:factorprop}
Let $F\in \Der(\cor_{\V_\al})$, then 
\begin{enumerate}[(i)]
\item $\Inter(F,\beta^{-1} \alpha^{-1}F)=\Int(\gamma^a)$,
\item $\Inter(F,\roim{\alpha}\oim{\beta}F)=\Int(\gamma^a)$.
\end{enumerate}
\end{proposition}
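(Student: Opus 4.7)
The plan is to exhibit, for every $v\in \Int(\gamma^a)$, an explicit $v$-interleaving between $F$ and $F'$, where $F'=\beta^{-1}\oim{\beta}F$ in (i) and $F'=\roim{\alpha}\oim{\beta}F$ in (ii). In both cases the relevant adjunction ($\beta^{-1}\dashv \oim{\beta}$ for (i); $\alpha^{-1}=\oim{\beta}\dashv \roim{\alpha}$ for (ii)) provides a canonical morphism: the counit $\epsilon\colon \beta^{-1}\oim{\beta}F\to F$ for (i), the unit $\eta\colon F\to \roim{\alpha}\oim{\beta}F$ for (ii). Full faithfulness of $\beta^{-1}$, respectively of $\roim{\alpha}$, ensures that $\alpha^{-1}\epsilon$, respectively $\alpha^{-1}\eta$, is an isomorphism, so that the cone of $\epsilon$ and the fiber of $\eta$ are both ephemeral. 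By Corollary~\ref{cor:ephinter} combined with Lemma~\ref{lem:ephzerointer}, applied at $v/2\in\Int(\gamma^a)$, every ephemeral $E$ satisfies $\chi^\al_{v,0}(E)=0$.

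For (i), let $K$ be the cone of $\epsilon$ and $p\colon F\to K$ the corresponding map. Naturality of $\chi^\al_{v,0}$ and ephemerality of $K$ give $p\circ \chi^\al_{v,0}(F)=\chi^\al_{v,0}(K)\circ \oim{\tau_v}p=0$, so $\chi^\al_{v,0}(F)$ lifts through $\epsilon$: choose $f\colon \oim{\tau_v}F\to \beta^{-1}\oim{\beta}F$ with $\epsilon\circ f=\chi^\al_{v,0}(F)$, and set $g:=\epsilon\circ \chi^\al_{v,0}(\beta^{-1}\oim{\beta}F)$. The identity $g\circ\oim{\tau_v}f=\chi^\al_{2v,0}(F)$ is then immediate from $\epsilon\circ f=\chi^\al_{v,0}(F)$ together with the composition law for smoothings. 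For the reverse identity $f\circ\oim{\tau_v}g=\chi^\al_{2v,0}(\beta^{-1}\oim{\beta}F)$, post-composing both sides with $\epsilon$ gives the same morphism by naturality, so their difference factors through $K[-1]$. This difference must vanish since $\oim{\tau_{2v}}$ commutes with $\beta^{-1}$ and, using $\beta^{-1}\dashv\oim{\beta}$ together with $\oim{\beta}K\simeq\alpha^{-1}K\simeq 0$,
\[
\Hom(\oim{\tau_{2v}}\beta^{-1}\oim{\beta}F,\,K[-1])\simeq \Hom(\oim{\tau_{2v}}\oim{\beta}F,\,\oim{\beta}K[-1])=0.
\]

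For (ii), take $f:=\eta\circ \chi^\al_{v,0}(F)$; the same ephemerality argument applied to the connecting map from $\roim{\alpha}\oim{\beta}F$ to $K'[1]$ (where $K'$ is the fiber of $\eta$) shows that $\chi^\al_{v,0}(\roim{\alpha}\oim{\beta}F)$ lifts through $\eta$ to some $g\colon\oim{\tau_v}(\roim{\alpha}\oim{\beta}F)\to F$. The identity $f\circ\oim{\tau_v}g=\chi^\al_{2v,0}(\roim{\alpha}\oim{\beta}F)$ then follows cleanly from the adjunction bijection
\[
\Hom(\oim{\tau_{2v}}\roim{\alpha}\oim{\beta}F,\,\roim{\alpha}\oim{\beta}F)\simeq \Hom(\oim{\tau_{2v}}\oim{\beta}F,\,\oim{\beta}F),
\]
under which both sides correspond to $\chi^\gamma_{2v,0}(\oim{\beta}F)$.

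The main obstacle is the remaining identity $g\circ\oim{\tau_v}f=\chi^\al_{2v,0}(F)$: naturality yields only that the obstruction factors through the ephemeral $K'$, and the vanishing-$\Hom$ trick used in (i) no longer applies since $F$ is not itself in the image of $\roim{\alpha}$. To bypass this, I would perform the entire construction at the chain level: choose a homotopically injective resolution $F\to I$, so that $\roim{\alpha}\oim{\beta}F\simeq \oim{\alpha}\oim{\beta}I$ is computed by the termwise formula $\oim{\alpha}\oim{\beta}I(x+\gamma)=I(x+\Int(\gamma))$. Define $f$ and $g$ directly at sections as the restriction morphisms along the inclusions $(x+v)+\gamma\supset x+\Int(\gamma)\supset x+\gamma$ and $(x+v)+\Int(\gamma)\supset x+\gamma$, all of which hold precisely because $v\in\Int(\gamma^a)$ (using that $\Int(\gamma)+\gamma\subset\Int(\gamma)$). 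Both interleaving identities then reduce to the triviality that a composition of restriction maps equals the restriction along the composite inclusion, sidestepping the non-uniqueness of lifts in the derived category.
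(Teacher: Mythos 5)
Your proof of part (i) is correct but takes a genuinely different route from the paper's. You work entirely in the derived category: you form the cone $K$ of the counit $\epsilon\colon \opb{\beta}\oim{\beta}F \to F$, use its ephemerality together with Corollary~\ref{cor:ephinter} and Lemma~\ref{lem:ephzerointer} to show $\chi^\al_{v,0}(K)=0$ (hence $\chi^\al_{v,0}(F)$ lifts through $\epsilon$), set $g:=\epsilon\circ\chi^\al_{v,0}(\opb{\beta}\oim{\beta}F)$, verify one interleaving identity by naturality and the other by observing that the obstruction lives in
\[
\Hom(\oim{\tau_{2v}}\opb{\beta}\oim{\beta}F,\,K[-1])\simeq\Hom(\oim{\tau_{2v}}\oim{\beta}F,\,\oim{\beta}K[-1])=0,
\]
using $\opb{\beta}\dashv\oim{\beta}$, commutation of $\beta$ with $\tau$, and $\oim{\beta}K\simeq\opb{\alpha}K\simeq 0$. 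This is clean and rigorous, and arguably buys a sharper argument than the paper's, which constructs $f$ and $g$ explicitly from the presheaf-level formula for $(\alpha\circ\beta\circ\tau_{-v})^\dagger F$ and a colimit comparison, then appeals to a ``straightforward computation'' before passing to a homotopically injective replacement. Your diagnosis of why the same scheme cannot close part (ii) is also exactly right: the obstruction in $\Hom(\oim{\tau_{2v}}F,K')$ need not vanish because $\roim{\alpha}$ is the right adjoint, so $F$ is not in the image and the adjunction argument gives no control. Your fallback for (ii) --- pass to a homotopically injective resolution $\cI$, observe $\oim{\alpha}\oim{\beta}\cI(x+\gamma)=\cI(x+\Int(\gamma))$, and define $f$ and $g$ as restrictions along the inclusions $(x+v)+\gamma\supset x+\Int(\gamma)\supset x+\gamma$ --- is essentially verbatim the paper's proof of (ii), which uses the inclusions $U\subset\alpha^t(U)+v\subset U+v$. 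In short: (i) is proved by a genuinely different categorical argument; (ii) coincides with the paper's, and you correctly identify why the abstract route fails there.
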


\begin{proof}

\noindent(i) We first prove that $\Inter(F,\beta^{-1} \alpha^{-1}F)=\Int(\gamma^a)$. Let $v \in \Int(\gamma^a)$, we first assume that $F \in \Ch(\cor_{\V_\al})$ the category of chain complexes of $\cor_{\V_\al}$-modules and remark that
\begin{equation*}
\oim{\tau_v}\opb{\beta} \opb{\alpha} F \simeq \opb{(\alpha \circ \beta \circ \tau_{-v})}F.
\end{equation*}
Let $U$ and $V$ be open subsets of $\V_\al$. As $(\alpha \beta \tau_{-v})^t(V)=V + \Int(\gamma)-v$, if $U \subset (\alpha \beta \tau_{-v})^t(V)$ then, $U \subset  (\alpha \beta \tau_{-v})^t(V) \subset V$. Hence, the restriction morphisms $F(V) \to F(U)$ provide a map
\begin{align*}
(\alpha \circ \beta \circ \tau_{-v})^\dagger F(U) \simeq \underset{U \subset \alpha \beta \tau_{-v}(V)}{\colim} F(V) \to F(U).
\end{align*}
Sheafifying, we get a map
\begin{equation*}
f\colon \oim{\tau_v}\opb{\beta} \opb{\alpha} F \to F.
\end{equation*} 

Let $U$ be an open subset of $\V_\al$ and let $v \in \Int(\gamma^a)$. Then $U \subset U+\Int(\gamma)+v$. Thus, by definition of colimits, there is a morphism
\begin{equation*}
F(U+v) \to \underset{U \subset V+\Int(\gamma)}{\colim} F(V).
\end{equation*}
This induces a morphism of sheaves
\begin{equation*}
g \colon \oim{\tau_v}F \to \opb{\beta}\opb{\alpha}F.
\end{equation*}

A straightforward computation shows that
\begin{align*}
\oim{\tau_{2v}} \opb{\beta}\opb{\alpha} F \stackrel{\oim{\tau_v}f}{\longrightarrow} F \stackrel{g}{\longrightarrow}  \oim{\tau_v} \opb{\beta}\opb{\alpha} F & \textnormal{\qquad and} & 
\oim{\tau_{2v}} F \stackrel{\oim{\tau_v}f}{\longrightarrow} \opb{\beta}\opb{\alpha} F \stackrel{g}{\longrightarrow}  \oim{\tau_v} F
\end{align*}
are respectively equals to the morphisms $\chi^{\al}_{2v,0}(\opb{\beta}\opb{\alpha} F)$ and $\chi^{\al}_{2v,0}(F)$.

If $F \in \Der(\cor_{\V_\al})$, the preceding construction applied to an homotopically injective replacement of $F$ provides an interleaving between $F$ and $\opb{\beta}\opb{\alpha} F$, as the functors $\oim{\tau_v}$, $\oim{\tau_{2v}}$, $\opb{\beta}$, $\opb{\alpha}$ are exact.\\

\noindent (ii) Let $v \in \Int(\gamma^a)$ and $\cI$ be an homotopically injective resolution of $F$. 
For every $U \in \Op(\V_\al)$, 
\begin{equation*}
U \subset \alpha^t(U) + v\subset U + v.
\end{equation*}
Hence, we get the morphisms of sheaves
\begin{align*}
f\colon\oim{\tau_v} \oim{\alpha}\oim{\beta} \cI \to  \cI && g\colon \oim{\tau_v}\cI \to \oim{\alpha}\oim{\beta} \cI.
\end{align*}
The morphisms $f$ and $g$ defines a $v$-interleaving between $\cI$ and $\oim{\alpha}\oim{\beta} \cI$. Hence, between $F$ and $\roim{\alpha}\oim{\beta} F$.
\end{proof}

\begin{corollary}
Let $F\in \Mod(\cor_{\V_\al})$, then 
\begin{enumerate}[(i)]
\item $\Inter(F,\beta^{-1} \alpha^{-1}F)=\Int(\gamma^a)$,
\item $\Inter(F,\oim{\alpha}\oim{\beta}F)=\Int(\gamma^a)$.
\end{enumerate}
\end{corollary}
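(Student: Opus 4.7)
The plan is to apply Remark \ref{Rem:ab}: the constructions in the proof of Proposition \ref{prop:factorprop} are carried out at the level of actual sheaves (via colimits and restriction maps), so they descend to the abelian categories with no modification.

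For part (i), the morphisms $f \colon \oim{\tau_v}\opb{\beta}\opb{\alpha}F \to F$ and $g \colon \oim{\tau_v}F \to \opb{\beta}\opb{\alpha}F$ produced in the proof of Proposition \ref{prop:factorprop}(i) are built directly as morphisms in $\Mod(\cor_{\V_\al})$, using the colimit description of $\alpha^\dagger$ together with the restriction maps of $F$. Since $\opb{\alpha}$, $\opb{\beta}$ and $\oim{\tau_v}$ are exact and preserve $\Mod(\cor_{\V_\al})$, applying this construction to $F \in \Mod(\cor_{\V_\al})$ yields a $v$-interleaving in the abelian category for every $v \in \Int(\gamma^a)$, and the reverse inclusion $\Inter(F,\opb{\beta}\opb{\alpha}F) \subset \Int(\gamma^a)$ holds by definition.

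For part (ii), the statement involves the non-derived pushforward $\oim{\alpha}$, so one cannot literally quote Proposition \ref{prop:factorprop}(ii) (whose proof uses $\roim{\alpha}$ applied to an injective resolution). However, one reproduces the argument at the abelian level: for $U \in \Op(\V_\al)$ and $v \in \Int(\gamma^a)$, the fact that $-v \in \Int(\gamma)$ together with $U = U + \gamma$ yields the chain of inclusions
\[
\alpha^t(U) \;\subset\; U \;\subset\; \alpha^t(U+v) \;\subset\; U + v,
\]
from which restriction gives sheaf morphisms $f \colon \oim{\tau_v}\oim{\alpha}\oim{\beta}F \to F$ and $g \colon \oim{\tau_v}F \to \oim{\alpha}\oim{\beta}F$. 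By functoriality of restriction one verifies $f \circ \oim{\tau_v}g = \chi^{\al}_{2v,0}(F)$ and $g \circ \oim{\tau_v}f = \chi^{\al}_{2v,0}(\oim{\alpha}\oim{\beta}F)$, so this is a $v$-interleaving.

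The main subtlety is precisely the switch from $\roim{\alpha}$ to $\oim{\alpha}$ in part (ii): one must resist the temptation to merely apply $\Hn^0$ to the derived statement, since the two functors differ in general. The point is that the explicit interleaving morphisms require no injective resolution at all -- they come purely from restriction along the displayed open inclusions -- so the derived proof's skeleton transports verbatim to the underived abelian setting.
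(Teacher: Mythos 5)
Your proof is correct and follows the route the paper intends via Remark \ref{Rem:ab}: the interleaving morphisms in the proof of Proposition \ref{prop:factorprop} are built at the level of sheaves via restriction along open inclusions, so they descend to $\Mod(\cor_{\V_\al})$ without modification.

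One small remark: the ``subtlety'' you flag in part (ii) is not actually an obstruction. Since $\oim{\alpha}$ is left exact (it is a right adjoint), one has $\Hn^0(\roim{\alpha}G) \simeq \oim{\alpha}G$ for any sheaf $G$ viewed as a complex concentrated in degree zero. Hence applying $\Hn^0$ to the derived $v$-interleaving between $\iota(F)$ and $\roim{\alpha}\oim{\beta}\iota(F)$ -- and using that $\oim{\tau_v}$, being exact, commutes with $\Hn^0$ -- does yield an abelian $v$-interleaving between $F$ and $\oim{\alpha}\oim{\beta}F$. So the $\Hn^0$-route closes just as well. That said, the direct construction you spell out from the chain $\alpha^t(U) \subset U \subset \alpha^t(U) + v \subset U + v$ is also correct and arguably more transparent, as it avoids injective resolutions altogether; it is one of the two routes Remark \ref{Rem:ab} explicitly authorizes (``constructed directly or induced from the derived cases'').
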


\begin{lemma}\label{lem:prepiso}
Let $v\in \Int(\gamma^a)$ and denote by  $d_{I_\al}^v$ (resp. $d_{I_\gamma}^v$) the interleaving distance on $\Der(\cor_{\V_\al})$ (resp. $\Der(\cor_{\V_\gamma})$). Then : 
\begin{enumerate}[(i)]

   \item The functors $\roim{\alpha}$, $\opb{\beta}$ and $\oim{\beta}$ preserve $v$-interleavings,
      
   \item Let $F, \; G$ in $\Der(\cor_{\V_\gamma})$ then, $d_{I_\gamma}^v(F,G)=d_{I_\al}^v(\opb{\beta} F,\opb{\beta} G)=d_{I_\al}^v(\roim{\alpha} F,\roim{\alpha} G)$,

    \item  Let $F, \; G$ in $\Der(\cor_{\V_\al})$ then, $d_{I_\al}^v(F,G)=d_{I_\al}^v(\opb{\beta} \opb{\alpha} F,\opb{\beta} \opb{\alpha} G)$,
    
    \item  Let $F, \; G$ in $\Der(\cor_{\V_\al})$ then, $d_{I_\al}^v(F,G)=d_{I_\al}^v( \roim{\alpha} \oim{\beta} F, \roim{\alpha} \oim{\beta} G)$.
    
    \end{enumerate}
\end{lemma}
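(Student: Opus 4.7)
The plan is to derive (ii)--(iv) from (i) together with the fully-faithfulness properties of the relevant adjoints and Proposition~\ref{prop:factorprop}. So the first step is to establish (i) using the compatibility formulas \eqref{mor:smoothing_op_betastar}, \eqref{mor:smoothing_op_alphastar} and Lemma~\ref{lem:beta_smooth}. For instance, if $f \colon \oim{\tau_v} F \to G$ and $g \colon \oim{\tau_v} G \to F$ assemble into a $v$-interleaving between $F, G \in \Der(\cor_{\V_\al})$, applying the exact functor $\oim{\beta}$ and using the canonical isomorphism $\oim{\beta} \oim{\tau_v} \simeq \oim{\tau_v} \oim{\beta}$ produces a pair of morphisms $\oim{\beta} f, \oim{\beta} g$ in $\Der(\cor_{\V_\gamma})$. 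The interleaving pentagons transport across $\oim{\beta}$ thanks to \eqref{mor:smoothing_op_betastar}, which identifies the image of $\chi^\al_{2v,0}(F)$ with $\chi^\gamma_{2v,0}(\oim{\beta} F)$. Identical arguments, substituting \eqref{mor:smoothing_op_alphastar} for $\roim{\alpha}$ and Lemma~\ref{lem:beta_smooth} for $\opb{\beta}$, complete (i).

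For (ii), the inequalities $d_{I_\al}^v(\opb{\beta} F, \opb{\beta} G) \leq d_{I_\gamma}^v(F,G)$ and $d_{I_\al}^v(\roim{\alpha} F, \roim{\alpha} G) \leq d_{I_\gamma}^v(F,G)$ are immediate consequences of (i). For the reverse inequalities I would use that $\opb{\beta}$ and $\roim{\alpha}$ are fully faithful in the derived setting, so that $\oim{\beta} \opb{\beta} \simeq \id$ and $\opb{\alpha} \roim{\alpha} \simeq \id$. Given a $v$-interleaving between $\opb{\beta} F$ and $\opb{\beta} G$, applying $\oim{\beta}$ (which preserves $v$-interleavings by (i)) yields a $v$-interleaving between $F$ and $G$; the same reasoning with $\opb{\alpha} = \oim{\beta}$ applied to a $v$-interleaving between $\roim{\alpha} F$ and $\roim{\alpha} G$ delivers the remaining inequality.

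For (iii) and (iv), I would invoke Proposition~\ref{prop:factorprop}, which guarantees $\Inter(F, \opb{\beta} \opb{\alpha} F) = \Int(\gamma^a)$ and $\Inter(F, \roim{\alpha} \oim{\beta} F) = \Int(\gamma^a)$. Since $\Int(\gamma^a)$ is a cone, $\varepsilon v$ lies in it for every $\varepsilon > 0$ when $v \in \Int(\gamma^a)$, whence $d_{I_\al}^v(F, \opb{\beta} \opb{\alpha} F) = d_{I_\al}^v(F, \roim{\alpha} \oim{\beta} F) = 0$, and similarly for $G$. The pseudo-metric triangle inequality then sandwiches both $d_{I_\al}^v(F,G) \leq d_{I_\al}^v(\opb{\beta} \opb{\alpha} F, \opb{\beta} \opb{\alpha} G)$ and its reverse, establishing (iii); statement (iv) is obtained by substituting $\roim{\alpha} \oim{\beta}$ for $\opb{\beta} \opb{\alpha}$.

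The main obstacle is step (i): one must check that the interleaving pentagons transport intact under each of $\oim{\beta}$, $\opb{\beta}$, and $\roim{\alpha}$, which requires both that these functors commute with $\oim{\tau_v}$ and that they intertwine the smoothing morphism $\chi_{2v,0}$ on the source side with its analogue on the target side. Both properties are encoded in the compatibility formulas already proved, so the verification is mechanical once those lemmas are in hand; after (i), steps (ii)--(iv) reduce to standard adjunction and triangle-inequality manipulations.
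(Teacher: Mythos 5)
Your proof is correct and follows essentially the same route as the paper: establish (i) from the compatibility formulas and the commutation of $\alpha$, $\beta$ with $\tau_v$; derive (ii) from (i) together with the identities $\oim{\beta}\opb{\beta}\simeq\id$ and $\opb{\alpha}\roim{\alpha}\simeq\id$ coming from fully faithfulness; and deduce (iii)--(iv) from Proposition~\ref{prop:factorprop} via the triangle inequality. The only cosmetic difference is in (iii): you obtain both inequalities purely from the triangle inequality by sandwiching in both directions, whereas the paper obtains the second inequality by invoking that $\opb{\beta}\opb{\alpha}$ preserves interleavings; both are equally valid.
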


\begin{proof}
\begin{enumerate}[(i)]

\item This is a consequence of the fact that both morphisms of sites $\alpha$ and $\beta$ commute with $\tau_v$, combined with the isomorphisms \eqref{mor:smoothing_op_betastar}, \eqref{mor:smoothing_op_alphastar} and Lemma \ref{lem:beta_smooth}. 

\item This follows from the fully faithfulness of $\roim{\alpha}$ and $\opb{\beta}$ and that $\alpha$ and $\beta$ commute with $\tau_v$.

\item Using the triangular inequalities, we obtain
\begin{align*}
d_{I_\al}^v(F,G) & \leq d_{I_\al}^v(F,\opb{\beta} \opb{\alpha} F) + d_{I_\al}^v(\opb{\beta} \opb{\alpha} F,\opb{\beta} \opb{\alpha} G) + d_{I_\al}^v(\opb{\beta} \opb{\alpha} G, G)\\
& \leq d_{I_\al}^v(\opb{\beta} \opb{\alpha} F,\opb{\beta} \opb{\alpha} G)
\end{align*}
as $d_{I_\al}^v(F,\opb{\beta} \opb{\alpha} F)= d_{I_\al}^v(\opb{\beta} \opb{\alpha} G, G)=0$ by Proposition \ref{prop:factorprop}. Moreover, $\opb{\beta} \opb{\alpha}$ preserves interleaving. Hence,
\begin{equation*}
d_{I_\al}^v(\opb{\beta} \opb{\alpha} F,\opb{\beta} \opb{\alpha} G) \leq d_{I_\al}^v(F,G)
\end{equation*}
It follows that $d_{I_\al}^v(\opb{\beta} \opb{\alpha} F,\opb{\beta} \opb{\alpha} G)= d_{I_\al}^v(F,G)$.
\end{enumerate}
\end{proof}

\begin{theorem}\label{thm:isom}
Let $v\in \Int(\gamma^a)$, $F$, $G \in \Der(\cor_{\V_\al})$ and denote by  $d_{I_\al}^v$ (resp. $d_{I_\gamma}^v$) the interleaving distance on $\Der(\cor_{\V_\al})$ (resp. $\Der(\cor_{\V_\gamma})$). Then :
\begin{equation*}
d_{I_\al}^v(F,G)=d_{I_\gamma}^v(\oim{\beta}F,\oim{\beta}G).
\end{equation*}
\end{theorem}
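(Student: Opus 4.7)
The plan is to deduce this isometry directly from Lemma \ref{lem:prepiso}, which already packages the hard work: the fully faithfulness of $\opb{\beta}$ and $\roim{\alpha}$, the commutation of the morphisms of sites with the translations $\tau_v$ (encoded in Lemma \ref{lem:beta_smooth} and the formulas \eqref{mor:smoothing_op_betastar}, \eqref{mor:smoothing_op_alphastar}), and the $v$-interleaving between any $F \in \Der(\cor_{\V_\al})$ and $\opb{\beta}\opb{\alpha}F$ (resp.\ $\roim{\alpha}\oim{\beta}F$) established in Proposition \ref{prop:factorprop}.

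The key observation is the identification $\oim{\beta}\simeq \opb{\alpha}$ from Proposition \ref{prop:adjunctpro}(i), so that $\opb{\beta}\oim{\beta} \simeq \opb{\beta}\opb{\alpha}$. Applying part (ii) of Lemma \ref{lem:prepiso} to the pair $\oim{\beta}F,\ \oim{\beta}G \in \Der(\cor_{\V_\gamma})$ yields
\begin{equation*}
d_{I_\gamma}^v(\oim{\beta}F,\oim{\beta}G) \;=\; d_{I_\al}^v(\opb{\beta}\oim{\beta}F,\opb{\beta}\oim{\beta}G) \;=\; d_{I_\al}^v(\opb{\beta}\opb{\alpha}F,\opb{\beta}\opb{\alpha}G).
\end{equation*}
Now part (iii) of the same lemma gives $d_{I_\al}^v(\opb{\beta}\opb{\alpha}F,\opb{\beta}\opb{\alpha}G)=d_{I_\al}^v(F,G)$, which concludes the proof. (A symmetric argument going through $\roim{\alpha}\oim{\beta}F$ and combining parts (ii) and (iv) of Lemma \ref{lem:prepiso} also works.)

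There is essentially no obstacle here; the theorem is a bookkeeping corollary. The only point that deserves attention is that one must use the correct identification of adjoints, namely $\oim{\beta}=\opb{\alpha}$, in order to chain (ii) and (iii) together. Once this is observed, the isometry follows formally from the fact that the unit $F \to \opb{\beta}\opb{\alpha}F$ (or equivalently the co-unit $\roim{\alpha}\oim{\beta}F \to F$ at the derived level) is at zero interleaving distance, so that the essential image of $\opb{\beta}$ (resp.\ $\roim{\alpha}$) is dense for $d_{I_\al}^v$ in a way that precisely matches the isometric embedding of $\Der(\cor_{\V_\gamma})$ provided by either of these fully faithful functors.
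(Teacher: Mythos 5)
Your proof is correct and relies on the same ingredients as the paper's, namely Lemma~\ref{lem:prepiso} together with the identification $\oim{\beta}=\opb{\alpha}$; the only difference is that you chain parts~(ii) and~(iii) to get a direct chain of equalities, whereas the paper combines part~(i) with part~(iii) to establish the two inequalities separately. Your variant is marginally more streamlined but not a genuinely different route.
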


\begin{proof}
By Lemma \ref{lem:prepiso} (i), $\oim{\beta}$ preserves $v$-interleavings. Hence, we obtain the inequality
\begin{equation*}
d_{I_\gamma}^v(\beta_*F, \beta_*G) \leq d_{I_\al}^v(F,G).
\end{equation*}

By Lemma \ref{lem:prepiso} (iii), $d_{I_\al}^v(F,G)= d_{I_\al}^v(\opb{\beta}\opb{\alpha}F,\opb{\beta}\opb{\alpha}G)$ and $\opb{\beta}$ preserves interleavings. Then,
\begin{equation*}
d_{I_\al}^v(\opb{\beta}\opb{\alpha}F,\opb{\beta}\opb{\alpha}G) \leq d_{I_\gamma}^v(\opb{\alpha}F,\opb{\alpha}G)
\end{equation*}
Finally, as $\oim{\beta}=\opb{\alpha}$,
\begin{equation*}
d_{I_\al}^v(F,G) \leq d_{I_\gamma}^v(\oim{\beta}F,\oim{\beta}G).
\end{equation*}
Hence, $d_{I_\al}^v(F,G)= d_{I_\gamma}^v(\oim{\beta}F,\oim{\beta}G)$.
\end{proof}

\begin{corollary}
Let $v \in \Int(\gamma^a)$ and $F \in \Der(\cor_{\V_\gamma})$. Assume that $d_{I_\gamma}^v(F,0)=0$. Then $F \simeq 0$.
\end{corollary}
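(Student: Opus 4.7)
The plan is to transfer the problem from $\Der(\cor_{\V_\gamma})$ to $\Der(\cor_{\V_\al})$ via the fully faithful functor $\roim{\alpha}$, where the vanishing of the interleaving distance is already known to characterize ephemerality. First, I would apply Lemma \ref{lem:prepiso}(ii) to the pair $(F,0)$: since $\roim{\alpha}$ is linear in the natural sense and preserves the zero object, the assumption $d^v_{I_\gamma}(F,0)=0$ yields $d^v_{I_\al}(\roim{\alpha}F,0)=0$.

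Next, Corollary \ref{cor:effdis} says that a zero $d^v_{I_\al}$-distance to $0$ is equivalent to being ephemeral, so $\roim{\alpha}F\in\Der_\Eph(\cor_{\V_\al})$. By the lemma preceding the localization discussion, $\Der_\Eph(\cor_{\V_\al})$ coincides with $\Ker\opb{\alpha}$, hence $\opb{\alpha}\roim{\alpha}F\simeq 0$.

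To finish, I would invoke the adjunction $\opb{\alpha}\dashv\roim{\alpha}$ together with the full faithfulness of $\roim{\alpha}$ (proved just after the introduction of the derived adjunctions): this forces the counit $\opb{\alpha}\roim{\alpha}\to\id$ to be an isomorphism of functors. Evaluating at $F$ gives $F\simeq\opb{\alpha}\roim{\alpha}F\simeq 0$. No serious obstacle is expected, since each step is an immediate consequence of a previously established statement; the content of the corollary is really the packaging of the isometry (Lemma \ref{lem:prepiso}(ii)), the distance-theoretic characterization of ephemerals (Corollary \ref{cor:effdis}), and the identification $\Der_\Eph(\cor_{\V_\al})=\Ker\opb{\alpha}$ into a single separation statement for $d^v_{I_\gamma}$ on $\gamma$-sheaves.
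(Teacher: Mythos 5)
Your proof is correct and takes essentially the same route as the paper's: both transfer the vanishing of the $\gamma$-interleaving distance to the Alexandrov side via the isometry results, invoke Corollary~\ref{cor:effdis} to conclude ephemerality there, and then use that $\oim{\beta}=\opb{\alpha}$ annihilates ephemeral objects. The only cosmetic difference is that the paper phrases the transfer as ``$\oim{\beta}$ is essentially surjective, pick a preimage $G$,'' while you explicitly name the preimage as $\roim{\alpha}F$ and recover $F\simeq\opb{\alpha}\roim{\alpha}F$ from the counit isomorphism furnished by the full faithfulness of $\roim{\alpha}$ — the same fact underlying the essential surjectivity the paper cites.
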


\begin{proof}
	This result can be proved directly ``by hand". Here, we give a proof using our results. The functor $\oim{\beta}$ is essentially surjective. Therefore, there exists $G \in \Der(\cor_{\V_\al})$ such that $\oim{\beta}G\simeq F$. Moreover, $\oim{\beta}$ is an isometry by Theorem \ref{thm:isom}. Hence,
	\begin{equation*}
	d_{I_\al}^v(G,0)=d_{I_\gamma}^v(F,0)=0.
	\end{equation*}
	It follows from Corollary \ref{cor:effdis}, that $G$ is ephemeral. Thus, $F \simeq \oim{\beta}G \simeq 0$.
\end{proof}

Let $v \in \Int(\gamma^a)$, We write $d^v_{I_\mu}$ for the interleaving distance associated with $v$ on $\Derb_{\gamma^{\circ\,a}}(\cor_{\V})$.

\begin{proposition}
The functor $\roim{\phi_\gamma} \colon \Derb_{\gamma^{\circ\,a}}(\cor_{\V}) \to \Derb(\cor_{\V_\gamma})$ and its quasi inverse $\opb{\phi_\gamma}$ are isometries i.e. 
\begin{enumerate}[(i)]
\item for every $F, \;G \in \Derb_{\gamma^{\circ\,a}}(\cor_{\V})$, $d^v_{I_\mu}(F,G)= d^v_{I^\gamma}(\roim{\phi_\gamma}F,\roim{\phi_\gamma}G)$,

\item for every $F, \;G \in \Derb(\cor_{\V_\gamma})$, $d^v_{I_\gamma}(F,G)= d^v_{I_\mu}(\opb{\phi_\gamma}F,\opb{\phi_\gamma}G)$.
\end{enumerate}
\end{proposition}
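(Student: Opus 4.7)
The plan is to mimic the argument used in Lemma \ref{lem:prepiso} and Theorem \ref{thm:isom}: exhibit $\roim{\phi_\gamma}$ and $\opb{\phi_\gamma}$ as functors that preserve $v$-interleavings, and then invoke the fact that they are mutually quasi-inverse equivalences in order to deduce the equalities of distances.

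First, I would record the compatibility between $\phi_\gamma$ and the translations. Since the underlying map of $\phi_\gamma$ is the identity of $\V$, one has $\phi_\gamma \circ \tau_v = \tau_v \circ \phi_\gamma$ as continuous maps, so there are canonical isomorphisms of functors $\roim{\phi_\gamma}\, \oim{\tau_v} \simeq \oim{\tau_v}\, \roim{\phi_\gamma}$ and $\opb{\phi_\gamma}\, \oim{\tau_v} \simeq \oim{\tau_v}\, \opb{\phi_\gamma}$ (the latter needing only that $\opb{\phi_\gamma}$ is exact and commutes with $\tau_v$). Together with Lemma \ref{lem:microgam}, which identifies the smoothing transformations $\chi^\mu_{v,w}$ and $\chi^\gamma_{v,w}$ under $\roim{\phi_\gamma}$ and $\opb{\phi_\gamma}$, this ensures that all data defining a $v$-interleaving (the shifted objects, the smoothing morphism $\chi_{2v,0}$, and the morphisms $f,g$) are transported coherently by each functor.

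Concretely, suppose $F,G\in \Derb_{\gamma^{\circ,a}}(\cor_\V)$ are $v$-interleaved via $(f,g)$. Applying $\roim{\phi_\gamma}$ to the interleaving diagram and using the natural isomorphisms above yields a diagram of the same shape for $\roim{\phi_\gamma}F$ and $\roim{\phi_\gamma}G$ in $\Derb(\cor_{\V_\gamma})$; the horizontal composites, which were $\chi^\mu_{2v,0}(F)$ and $\chi^\mu_{2v,0}(G)$, become $\chi^\gamma_{2v,0}(\roim{\phi_\gamma}F)$ and $\chi^\gamma_{2v,0}(\roim{\phi_\gamma}G)$ by Lemma \ref{lem:microgam}. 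This proves $d^v_{I_\gamma}(\roim{\phi_\gamma}F,\roim{\phi_\gamma}G) \leq d^v_{I_\mu}(F,G)$. The same argument run with $\opb{\phi_\gamma}$ in place of $\roim{\phi_\gamma}$ gives the converse: writing $F \simeq \opb{\phi_\gamma}\roim{\phi_\gamma} F$ and $G \simeq \opb{\phi_\gamma}\roim{\phi_\gamma} G$, any $v$-interleaving of $\roim{\phi_\gamma}F$ with $\roim{\phi_\gamma}G$ produces, after applying $\opb{\phi_\gamma}$, a $v$-interleaving of $F$ with $G$, so
\begin{equation*}
d^v_{I_\mu}(F,G) \leq d^v_{I_\gamma}(\roim{\phi_\gamma}F,\roim{\phi_\gamma}G).
\end{equation*}
Combining both inequalities gives (i), and (ii) follows by applying (i) to $\opb{\phi_\gamma}F$ and $\opb{\phi_\gamma}G$ together with $\roim{\phi_\gamma}\opb{\phi_\gamma}\simeq \id$.

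The only mildly delicate point will be to justify that $\oim{\tau_v}$ actually preserves the subcategory $\Derb_{\gamma^{\circ,a}}(\cor_\V)$, so that the interleaving diagram makes sense inside that category; this is provided by Lemma \ref{lem:transker}, which identifies $\oim{\tau_v}$ with the convolution functor $\cor_{-v+\gamma^a}\sconv[np](-)$ on $\Derb_{\gamma^{\circ,a}}(\cor_\V)$. Everything else is a direct translation via the equivalence of categories, and no further computation is required.
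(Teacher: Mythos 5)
Your proof is correct and takes essentially the same approach as the paper, whose proof is a terse two-line sketch citing the commutation of $\phi_\gamma$ with $\tau_v$, the isomorphism $\oim{\tau_v}\simeq\opb{\tau_{-v}}$, and Lemma \ref{lem:microgam}; you have simply spelled out the details (transport of the interleaving diagram, the two inequalities, and the reduction of (ii) to (i)) that the paper leaves implicit.
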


\begin{proof}
First remark that the application $\phi_\gamma$ commutes with $\tau_v$ and that $\oim{\tau_v} \simeq \opb{\tau_{-v}}$. Finally, the result follows from Lemma \ref{lem:microgam}.
\end{proof}

A similar result was already proved in \cite{Berstableres}

\begin{lemma} Let $v \in \Int(\gamma^a)$, $\iota_\al \colon \Mod(\cor_{\V_\al}) \to \Der(\cor_{\V_\al})$ the functor which sends an object of $\Mod(\cor_{\V_\al})$ to the corresponding complex in degree zero, let $d^v_{I_\al^{\mathrm{ab}}}$ be the interleaving distance on $\Mod(\cor_{\V_\al})$ and $d^v_{I_\al}$ the interleaving distance on $\Der(\cor_{\V_\al})$. Then, for every $F,\; G \in \Mod(\cor_{\V_\al})$,
\begin{equation*}
d^v_{I_\al}(\iota(F),\iota(G))=d^v_{I_\al^{\mathrm{ab}}}(F,G).
\end{equation*}
\end{lemma}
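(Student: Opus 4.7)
The plan is to establish the two inequalities $d^v_{I_\al}(\iota(F),\iota(G)) \leq d^v_{I_\al^{\mathrm{ab}}}(F,G)$ and its reverse by transporting interleaving data through $\iota$. Two structural facts will do most of the work. First, $\iota \colon \Mod(\cor_{\V_\al}) \to \Der(\cor_{\V_\al})$ is fully faithful, so for any $A, B \in \Mod(\cor_{\V_\al})$ the map $\Hom_{\Mod(\cor_{\V_\al})}(A,B) \to \Hom_{\Der(\cor_{\V_\al})}(\iota(A),\iota(B))$ is a bijection. Second, because $\tau_v$ is a homeomorphism of $\V_\al$, the functor $\oim{\tau_v}$ is exact on $\Mod(\cor_{\V_\al})$; therefore the derived pushforward $\oim{\tau_v}$ on $\Der(\cor_{\V_\al})$ requires no injective resolution, and one has a canonical isomorphism $\oim{\tau_v}\circ\iota \simeq \iota\circ\oim{\tau_v}$. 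Unwinding the construction of $\chi^\al_{cv,0}$ from the restriction maps, this gives in particular $\chi^\al_{cv,0}(\iota(F)) = \iota(\chi^\al_{cv,0}(F))$ for every $c \geq 0$.

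For the first inequality, I would start from any $c\cdot v$-interleaving $(f,g)$ of $F$ and $G$ in $\Mod(\cor_{\V_\al})$ and apply $\iota$. Using the two facts above, the image diagram is exactly the defining diagram of a $c\cdot v$-interleaving of $\iota(F)$ and $\iota(G)$ in $\Der(\cor_{\V_\al})$. Taking infima yields $d^v_{I_\al}(\iota(F),\iota(G)) \leq d^v_{I_\al^{\mathrm{ab}}}(F,G)$.

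For the reverse inequality, suppose $\iota(F)$ and $\iota(G)$ are $c\cdot v$-interleaved via morphisms $\tilde f \in \Hom_{\Der(\cor_{\V_\al})}(\oim{\tau_{cv}}\iota(F),\iota(G))$ and $\tilde g \in \Hom_{\Der(\cor_{\V_\al})}(\oim{\tau_{cv}}\iota(G),\iota(F))$. By the two facts above,
\begin{equation*}
\Hom_{\Der(\cor_{\V_\al})}(\oim{\tau_{cv}}\iota(F),\iota(G)) \;\simeq\; \Hom_{\Der(\cor_{\V_\al})}(\iota(\oim{\tau_{cv}}F),\iota(G)) \;\simeq\; \Hom_{\Mod(\cor_{\V_\al})}(\oim{\tau_{cv}}F,G),
\end{equation*}
and symmetrically for $\tilde g$. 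So $\tilde f = \iota(f)$ and $\tilde g = \iota(g)$ for unique morphisms $f,g$ in $\Mod(\cor_{\V_\al})$. The commutativity of the interleaving diagram in $\Der(\cor_{\V_\al})$ involves only objects and arrows of the form $\iota(\cdot)$ (thanks again to $\oim{\tau_{cv}}\iota \simeq \iota\oim{\tau_{cv}}$ and to the identification of the smoothing morphisms recalled above), so the full faithfulness of $\iota$ forces the analogous diagram to commute in $\Mod(\cor_{\V_\al})$. This exhibits a $c\cdot v$-interleaving of $F$ and $G$ at the abelian level, and the infimum over $c$ gives the reverse inequality.

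The only mildly subtle step is the compatibility $\chi^\al_{cv,0}(\iota(F)) = \iota(\chi^\al_{cv,0}(F))$; all the rest is a formal manipulation with a fully faithful embedding. This compatibility is ultimately a consequence of the exactness of $\oim{\tau_v}$ on Alexandrov sheaves, which allows the derived smoothing morphism to be computed directly on $F$ rather than on an injective resolution.
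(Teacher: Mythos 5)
Your proof is correct and follows essentially the same approach as the paper. The paper's proof simply cites Remark 4.4, which records precisely the structural facts you rely on: $\iota$ is fully faithful and $\oim{\tau_v}$ is exact, hence commutes with $\iota$ (and with $\Hn^0$); you have just spelled out the two inequalities that these facts immediately imply.
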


\begin{proof}
Clear in view of Remark \ref{Rem:ab}.
\end{proof}

\begin{remark}
Similar results hold when replacing
\begin{enumerate}
\item $\iota_\al \colon \Mod(\cor_{\V_\al}) \to \Der(\cor_{\V_\al})$ by $\iota_\gamma \colon \Mod(\cor_{\V_\gamma}) \to \Der(\cor_{\V_\gamma})$ (resp. $\iota_\mu \colon \Mod_{\gamma^{\circ\,a}}(\cor_{\V_\gamma}) \to \Derb_{\gamma^{\circ\,a}}(\cor_{\V_\gamma})$,

\item  $\Mod(\cor_{\V_\al})$ by $\Mod(\cor_{\V_\gamma})$ (resp. $\Mod_{\gamma^{\circ\,a}}(\cor_{\V_\gamma})$),

\item $\Der(\cor_{\V_\al})$ by $\Der(\cor_{\V_\gamma})$ (resp. $\Derb_{\gamma^{\circ\,a}}(\cor_{\V_\gamma}))$,

\item $d^v_{I_\al^{\mathrm{ab}}}$ by $d^v_{I_\gamma^{\mathrm{ab}}}$ (resp. $d^v_{I_\mu^{\mathrm{ab}}}$),

\item $d^v_{I_\al}$ by $d^v_{I_\gamma}$ (resp. $d^v_{I_\mu}$).
\end{enumerate}
\end{remark}

\begin{corollary}\label{cor:isomab}
Let $v\in \Int(\gamma^a)$, $F$, $G \in \Mod(\cor_{\V_\al})$ and denote by  $d_{I_\al}^v$ (resp. $d_{I_\gamma}^v$) the interleaving distance on $\Mod(\cor_{\V_\al})$ (resp. $Mod(\cor_{\V_\gamma})$). Then :
\begin{equation*}
d_{I_\al}^v(F,G)=d_{I_\gamma}^v(\oim{\beta}F,\oim{\beta}G).
\end{equation*}
\end{corollary}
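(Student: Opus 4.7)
The plan is to deduce this statement from the derived version, Theorem \ref{thm:isom}, by passing through the inclusions $\iota_\al$ and $\iota_\gamma$ of the abelian categories into their derived categories, using the comparison lemma for interleaving distances stated just before the corollary together with its $\gamma$-analogue recorded in the subsequent remark.

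First, I would apply Theorem \ref{thm:isom} to $\iota_\al(F)$ and $\iota_\al(G)$, which are objects of $\Der(\cor_{\V_\al})$. This yields
\begin{equation*}
d^v_{I_\al}(\iota_\al F, \iota_\al G) = d^v_{I_\gamma}(\oim{\beta}\iota_\al F, \oim{\beta}\iota_\al G).
\end{equation*}
By the lemma stated above the corollary, the left-hand side equals $d^v_{I_\al^{\mathrm{ab}}}(F,G)$, which in the notation of the corollary is $d^v_{I_\al}(F,G)$.

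Next, I would identify the objects appearing on the right-hand side. Since $\oim{\beta} \simeq \opb{\alpha}$ is an exact functor (it is simultaneously a left and right adjoint, by Proposition \ref{prop:adjunctpro}), it sends a complex concentrated in degree zero to a complex concentrated in degree zero, so there is a canonical isomorphism $\oim{\beta}\iota_\al(F) \simeq \iota_\gamma \oim{\beta}(F)$, and similarly for $G$. Hence
\begin{equation*}
d^v_{I_\gamma}(\oim{\beta}\iota_\al F, \oim{\beta}\iota_\al G) = d^v_{I_\gamma}(\iota_\gamma \oim{\beta}F, \iota_\gamma \oim{\beta}G).
\end{equation*}
Applying the $\gamma$-version of the comparison lemma (item (5) of the preceding remark), the right-hand side becomes $d^v_{I_\gamma^{\mathrm{ab}}}(\oim{\beta}F, \oim{\beta}G)$, which is the abelian distance on $\Mod(\cor_{\V_\gamma})$.

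Chaining the three equalities gives exactly $d^v_{I_\al}(F,G) = d^v_{I_\gamma}(\oim{\beta}F, \oim{\beta}G)$, as desired. The only delicate point is checking that $\oim{\beta}$ commutes with $\iota$ up to canonical isomorphism; this is immediate from exactness, so there is really no serious obstacle and the proof is a short formal consequence of the derived isometry theorem combined with the comparison lemmas.
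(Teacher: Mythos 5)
Your proof is correct and matches the route the paper sets up: the lemma and remark placed immediately before Corollary \ref{cor:isomab} exist precisely to enable this transfer of Theorem \ref{thm:isom} to the abelian level via $\iota_\al$ and $\iota_\gamma$, and your observation that $\oim{\beta}\simeq\opb{\alpha}$ is exact (so it commutes with $\iota$) is the only additional ingredient needed. Since the paper leaves the corollary's proof implicit, what you have written is exactly the argument the authors intend the reader to supply.
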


\section{Convolution and interleaving distances}

\subsection{Convolution distance}

We consider a finite dimensional real vector space $\V$ equipped with a norm $\|\cdot\|$ and $\cor$ a field. We endow $\V$ with the topology associated with the norm $\|\cdot\|$. Following \cite{Kashiwara2018a}, we briefly present the convolution distance. We first recall the following notations:

\begin{equation*}
s : \V \times \V \to \V, ~~~s(x,y) = x + y
\end{equation*}
\begin{equation*}
q_i : \V \times \V \to \V ~~(i=1,2) ~~~q_1(x,y) = x,~q_2(x,y) = y.
\end{equation*}

\begin{definition}\label{D:Convolution} The convolution bifunctor $\star \colon \Derb(\cor_\V)\times \Derb(\cor_\V) \to \Derb(\cor_\V)$ is defined by the formula : 
\begin{equation*}
F\star G = \text{R}s_!(F\boxtimes G).
\end{equation*}
\end{definition}

For $r \geq 0$, let $K_r := \cor_{B_r}$ with $B_r = \{x \in \V \mid  \| x \| \leq r \}$, seen as a complex concentrated in degree 0 in $\Derb(\cor_\V)$. For $r < 0 $, we set $K_r := \cor_{\{x \in \V \mid \, \|x\| < -r\}}[n]$ (where $n$ is the dimension of $\V$). 

The following proposition is proved in \cite{Kashiwara2018a}.

\begin{proposition}

\label{P:propertiesofconvolution} Let $r, r'\in \R$ and $F \in \Derb(\cor_\V)$. There are functorial isomorphisms 

\begin{equation*}
( K_{r}  \star K_{r'}) \star F \simeq K_{r + r'} \star F ~~~ and ~~~ K_0 \star F\simeq F.
\end{equation*}
\end{proposition}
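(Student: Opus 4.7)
The plan is to extract both isomorphisms from two independent ingredients: the associativity of the convolution bifunctor $\star$, and the key computation $K_r \star K_{r'} \simeq K_{r + r'}$ for all $r, r' \in \R$. Associativity of $\star$ is formal: it follows from the associativity of the sum map $s : \V \times \V \to \V$ together with base change for proper direct image and the compatibility of $\boxtimes$ with tensor product. Granting the key computation, the first displayed isomorphism $(K_r \star K_{r'}) \star F \simeq K_{r+r'} \star F$ then follows by applying $(\cdot) \star F$ to the isomorphism $K_r \star K_{r'} \simeq K_{r+r'}$.

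For the second isomorphism $K_0 \star F \simeq F$, I would argue directly. Since $K_0 = \cor_{\{0\}}$, write $j : \{0\} \times \V \hookrightarrow \V \times \V$ for the inclusion; then $s \circ j$ identifies $\{0\} \times \V$ with $\V$ and is in particular a homeomorphism. Since $\cor_{\{0\}} \boxtimes F \simeq \reim{j}\,(q_2 \circ j)^{-1} F$ (here $j$ is a closed embedding, so $\reim{j}$ is exact and needs no derivation), composing with $\reim{s}$ gives $\reim{s}(\cor_{\{0\}} \boxtimes F) \simeq \reim{(s \circ j)}(q_2 \circ j)^{-1} F \simeq F$, functorially in $F$.

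It thus remains to prove $K_r \star K_{r'} \simeq K_{r+r'}$, which I would handle by case analysis on the signs of $r$ and $r'$. When $r, r' \geq 0$, one has the Minkowski sum identity $B_r + B_{r'} = B_{r+r'}$, and the fibers of the proper map $s : B_r \times B_{r'} \to B_{r+r'}$ are non-empty, compact and convex (intersections of two closed balls), hence cohomologically trivial; proper base change combined with a Vietoris--Begle type argument then yields $\reim{s}(\cor_{B_r \times B_{r'}}) \simeq \cor_{B_{r+r'}}$. The cases with negative indices are then reduced to the positive case via Verdier duality, exploiting that the Verdier dual of $\cor_{B_r}$ on $\V$ of dimension $n$ is $\cor_{\Int(B_r)}[n]$, which is precisely the shape prescribed for $K_{-r}$, and that $\star$ intertwines Verdier duality on the factors with duality on the convolution.

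The main obstacle will be the mixed-sign case, e.g.\ $r > 0$ and $r' < 0$: the support on the second factor is no longer compact, so proper base change does not apply directly to $\reim{s}$. The cleanest route is to pass through Verdier duality to convert this into a convolution of two closed balls (up to shift), handle it via the first case, and then carefully verify that the resulting object has support and shift matching either $\cor_{B_{r+r'}}$ or $\cor_{\Int(B_{-(r+r')})}[n]$ according to the sign of $r + r'$. Functoriality in $F$ of the first isomorphism is then automatic, as all the tools used (proper base change, Verdier duality, Künneth) are natural.
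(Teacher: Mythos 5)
The paper does not actually prove this proposition; it states ``The following proposition is proved in \cite{Kashiwara2018a}'' and gives no argument, so there is no in-text proof to compare against. Your overall plan --- deduce the first isomorphism from the formal associativity of $\star$ plus the key identity $K_r \star K_{r'} \simeq K_{r+r'}$, and prove $K_0 \star F \simeq F$ directly via the closed embedding $\{0\}\times\V \hookrightarrow \V\times\V$ --- is the standard one and is correct as far as it goes. The unit argument is fine, and the case $r, r' \geq 0$ of the key identity (Minkowski sum $B_r+B_{r'}=B_{r+r'}$, compact convex fibers, Vietoris--Begle) is also fine.

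There is, however, a genuine gap in your treatment of the mixed-sign case. You propose to ``pass through Verdier duality to convert this into a convolution of two closed balls.'' But Verdier duality sends $K_a$ to $K_{-a}$ for every $a$ (indeed $D_\V(\cor_{B_a}) \simeq \cor_{\Int B_a}[n]$ for $a>0$, and biduality gives the reverse), and since $s$ is proper on the bounded supports involved one has $D_\V(K_r \star K_{r'}) \simeq K_{-r} \star K_{-r'}$. Thus duality maps a mixed pair $(r\geq 0,\; r'<0)$ to the mixed pair $(-r\leq 0,\; -r'>0)$, not to a pair of closed balls; the reduction you want does not materialize, and the mixed case has to be faced directly. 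A workable completion is: first reduce via associativity and the nonnegative case to the single identity $K_a \star K_{-a} \simeq K_0$ for $a>0$ (for instance, if $r\geq 0 > r'$ with $r+r'\geq 0$, write $K_r \star K_{r'} \simeq (K_{r+r'} \star K_{-r'}) \star K_{r'} \simeq K_{r+r'} \star (K_{-r'} \star K_{r'})$), and then compute $\reim{s}(\cor_{B_a \times \Int B_a})$ by hand: the fiber over $z=0$ is $\Int B_a$, an open $n$-ball contributing $\cor[-n]$, while the fiber over $z\neq 0$ is the ``lens'' $B_a \cap \Int(z+B_a)$, a bounded convex set containing part but not all of its boundary, whose compactly supported cohomology vanishes. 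This fiber analysis is exactly the technical point your duality shortcut was meant to bypass, and it cannot be avoided.
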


If $r \geq r' \geq 0 $, there is a canonical morphism 
$\chi_{r, r'} \colon K_{r}\to K_{r'}$ in $\Derb(\cor_\V)$. It induces a canonical morphism $\chi_{r, r'} \star F \colon K_{r} \star F \to  K_{r'} \star F $. 
In particular when $r' = 0$, we get
\begin{equation}
\chi_{r,0} \star F \colon K_{r} \star F \to  F.
\end{equation}

Following \cite{Kashiwara2018a}, we recall the notion of $c$-isomorphic sheaves. 

\begin{definition}
Let $F,G \in \Derb(\cor_\V)$ and let $c \geq  0$. The sheaves $F$ and $G$ are c-isomorphic if there are morphisms $f : K_r  \star F \to G$ and $g : K_r \star G \to F$ such that the  diagrams

\begin{align*}
\xymatrix{ K_{2c} \star F \ar[rr]^-{{K_{2c}} \star f} \ar@/_2pc/[rrrr]_{{\chi_{2c,0}} \star F} && K_{c} \star G \ar[rr]^-{g} &&   F
},\\
\xymatrix{ K_{2c} \star G \ar[rr]^-{{K_{2c}} \star g} \ar@/_2pc/[rrrr]_{{\chi_{2c,0}} \star G} && K_{c} \star F \ar[rr]^-{f} &&   G
}.
\end{align*}
are commutative.
\end{definition}

The convolution distance for sheaves was introduce in \cite{Kashiwara2018a}. We recall its definition and refer to ibid. for more details concerning this pseudo-distance. The convolution distance is 
\begin{equation*}
d_\mathfrak{c}(F,G) := \inf(\{c \geq 0 \mid F ~\text{and}~G~\text{are}~c -\text{isomorphic}\} \cup \{ \infty \}).
\end{equation*}

\begin{remark}
The treatment of the interleaving and convolution distances can be unified through the notion of flow on a category (see \cite{deSilva2018}).
\end{remark}

\subsection{Comparison of the convolution and the interleaving distance}

We first review the notion of gauge (also called Minkowski functional) associated to a convex. We refer the reader to \cite[Ch. 15]{Rockafellar} for more details. 

\textbf{In all this subsection $\V$ is a finite dimensional real vector space endowed with a norm $\| \cdot \|$.}

\begin{definition}
Let $K$ a non-empty convex of $\V$ such that $0 \in \Int K$. The gauge of $K$ is the function 
\begin{equation*}
g_K \colon \V \to \R ,\; x \mapsto \inf \lbrace \lambda > 0 \,\vert  x \in \lambda K \rbrace.
\end{equation*}
\end{definition}

The following proposition is classic. We refer the reader to \cite[Theorem 15.2]{Rockafellar} for a proof.

\begin{proposition}
Let $K$ be a symmetric closed bounded convex subset of $(\V, \| \cdot \|)$ such that $0 \in \Int K$. Then $g_K$ is a norm on $\V$ .
\end{proposition}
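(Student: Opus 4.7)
The plan is to verify the three axioms of a norm for $g_K$: positive definiteness, absolute homogeneity, and subadditivity, each relying on one of the hypotheses placed on $K$.

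First I would handle finiteness and positive definiteness. Since $0 \in \Int K$, there exists $r>0$ with the ball $B(0,r) \subset K$; hence for every $x \in \V$ one has $x \in (\|x\|/r) K$ whenever $x\neq 0$, so $g_K(x)\leq \|x\|/r<\infty$ and $g_K(0)=0$. Conversely, since $K$ is bounded there exists $M>0$ with $K\subset B(0,M)$; if $x \in \lambda K$ then $\|x\|\leq \lambda M$, so $g_K(x)\geq \|x\|/M$, which forces $g_K(x)=0\Rightarrow x=0$. This also shows $g_K$ is finite everywhere and that $g_K$ and $\|\cdot\|$ are comparable.

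Next I would prove absolute homogeneity. For $t>0$ the change of variables $\mu = \lambda/t$ gives
\begin{equation*}
g_K(tx)=\inf\{\lambda>0 \mid tx \in \lambda K\}=\inf\{t\mu>0 \mid x \in \mu K\}=t\, g_K(x).
\end{equation*}
For $t=-1$, the symmetry hypothesis $K=-K$ yields $-x \in \lambda K \iff x \in \lambda K$, hence $g_K(-x)=g_K(x)$. Combining these two observations gives $g_K(tx)=|t|g_K(x)$ for all $t \in \R$.

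Finally, I would establish subadditivity using convexity of $K$. Fix $x,y \in \V$ and $\varepsilon>0$, and pick $\lambda,\mu>0$ with $\lambda<g_K(x)+\varepsilon$, $\mu<g_K(y)+\varepsilon$, $x\in\lambda K$, $y\in\mu K$. Then writing
\begin{equation*}
\frac{x+y}{\lambda+\mu}=\frac{\lambda}{\lambda+\mu}\cdot\frac{x}{\lambda}+\frac{\mu}{\lambda+\mu}\cdot\frac{y}{\mu}
\end{equation*}
expresses $(x+y)/(\lambda+\mu)$ as a convex combination of elements of $K$, so it belongs to $K$ by convexity. Thus $x+y \in (\lambda+\mu)K$, whence $g_K(x+y)\leq \lambda+\mu< g_K(x)+g_K(y)+2\varepsilon$; letting $\varepsilon\to 0$ gives the triangle inequality. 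None of these steps is genuinely hard; the main subtlety is simply to use each hypothesis on $K$ in the right place (boundedness for definiteness, $0\in\Int K$ for finiteness, symmetry for homogeneity in $-1$, convexity for subadditivity), which is why the result is recorded as classical.
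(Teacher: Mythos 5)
Your proof is correct and complete. The paper does not actually prove this proposition; it records it as ``classic'' and refers the reader to \cite[Theorem 15.2]{Rockafellar}. Your self-contained verification of the three norm axioms is the standard textbook argument and uses each hypothesis in the expected place: $0 \in \Int K$ for finiteness, boundedness for definiteness, symmetry for homogeneity under sign change, and convexity for the triangle inequality. One small but worthwhile observation: your argument never invokes the hypothesis that $K$ is \emph{closed}, and indeed closedness is not needed for $g_K$ to be a norm. Closedness matters only for the subsequent (and implicitly used) identification of $K$ with the closed unit ball $\{x \mid g_K(x) \le 1\}$ of $g_K$: the inclusion $\{x \mid g_K(x) \le 1\} \subset K$ requires passing to a limit along $\mu x \to x$ as $\mu \to 1^-$, which needs $K$ closed. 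This is exactly what the paper uses when it later treats $B_v$ as the unit ball of $g_{B_v}$, so it is worth keeping in mind even though it falls outside the statement you were asked to prove.
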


Assume now that $\V$ is endowed  with a closed proper convex cone $\gamma$ with non-empty interior. Let $v \in \Int(\gamma^a)$ and consider the set 
\begin{equation*}
B_v := (v+\gamma) \cap (-v+\gamma^a).
\end{equation*}

\begin{lemma}
The set $B_v$ is a symmetric closed bounded convex subset of $\V$ such that $0 \in \Int B_v$.
\end{lemma}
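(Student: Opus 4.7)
The plan is to verify each property in turn, relying on the hypotheses on $\gamma$ (closed proper convex cone with non-empty interior) and on the fact that $v \in \Int(\gamma^a)$, which is equivalent to $-v \in \Int(\gamma)$ since $\gamma^a = -\gamma$.

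\smallskip
\noindent\textbf{Closedness and convexity} are immediate: $v+\gamma$ and $-v+\gamma^a$ are translates of closed convex cones, hence closed convex sets, and $B_v$ is their intersection.

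\smallskip
\noindent\textbf{Symmetry.} I would compute directly
\begin{equation*}
-B_v = (-(v+\gamma)) \cap (-(-v+\gamma^a)) = (-v-\gamma) \cap (v-\gamma^a) = (-v+\gamma^a) \cap (v+\gamma) = B_v,
\end{equation*}
using the definition $\gamma^a = -\gamma$.

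\smallskip
\noindent\textbf{Zero in the interior.} Since $v \in \Int(\gamma^a)$, there exists $r>0$ with $B(v,r) \subset \gamma^a$, hence $B(0,r) \subset -v + \gamma^a$. Dually, $-v \in \Int(\gamma)$ gives an $r'>0$ with $B(0,r') \subset v+\gamma$. Taking $\min(r,r')$ shows that a euclidean ball around $0$ lies in $B_v$.

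\smallskip
\noindent\textbf{Boundedness (the main obstacle).} This is the only step where one genuinely uses that $\gamma$ is \emph{proper}, i.e.\ $\gamma \cap \gamma^a = \{0\}$. My plan is a standard compactness/normalization argument: assume for contradiction that there is a sequence $(x_n) \subset B_v$ with $\|x_n\| \to \infty$. The unit vectors $u_n := x_n/\|x_n\|$ lie in the unit sphere of $\V$, so after extraction converge to some $u$ with $\|u\|=1$. Since $x_n - v \in \gamma$ and $\gamma$ is a closed cone, $(x_n-v)/\|x_n\| = u_n - v/\|x_n\| \in \gamma$ (by positive homogeneity), and passing to the limit gives $u \in \gamma$; the same argument with $x_n + v \in \gamma^a$ gives $u \in \gamma^a$. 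Hence $u \in \gamma \cap \gamma^a = \{0\}$, contradicting $\|u\|=1$. An alternative, if one prefers a quantitative version, is to pick $\xi \in \Int(\gamma^\circ)$ and a basis $\xi_1,\dots,\xi_n$ of $\V^*$ inside $\Int(\gamma^\circ)$, and use that each $\xi_i(x)$ is trapped between $\xi_i(v)$ and $-\xi_i(v)$ for $x \in B_v$; but the contradiction argument is cleaner and avoids invoking the non-emptiness of $\Int(\gamma^\circ)$.
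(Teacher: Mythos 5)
Your proof is correct, and for the first three properties (closed, convex, symmetric, $0 \in \Int B_v$) it is essentially the same as the paper's. The difference is in the boundedness argument. The paper uses the same normalization-and-compactness set-up (extract a limit $u$ of $x_n/\|x_n\|$), but then invokes Rockafellar's Theorem 8.2 to conclude that $u$ lies in the recession cone of $B_v$; it then uses the symmetry of $B_v$ to deduce that the whole line $\R u$ lies in $\gamma$, contradicting properness. Your argument short-circuits this: you observe directly that $u_n - v/\|x_n\| \in \gamma$ by positive homogeneity, pass to the limit using closedness of $\gamma$ to get $u \in \gamma$, and symmetrically $u \in \gamma^a$, so $u \in \gamma \cap \gamma^a = \{0\}$ — contradicting $\|u\| = 1$. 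This is more elementary (no recession-cone machinery, no appeal to symmetry of $B_v$ at this stage) and makes the role of the hypotheses visibly transparent: convexity for closedness/convexity, properness for boundedness, $v \in \Int(\gamma^a)$ for $0 \in \Int B_v$. A minor economy you could borrow from the paper: one $\varepsilon$ with $B(v,\varepsilon) \subset \gamma^a$ already gives $B(0,\varepsilon) \subset -v+\gamma^a$ and, by applying the antipodal map, $B(0,\varepsilon) \subset v+\gamma$, so no second radius $r'$ is needed.
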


\begin{proof}
The set $B_v$ is symmetric by construction and is closed and convex as it is the intersection of two closed convex sets. Since $v \in \Int (\gamma^a)$, there exists $\varepsilon > 0$ such that $B(v,\varepsilon) \subset \gamma^a$. Hence $B(0,\varepsilon)$ is a subset of $(v+\gamma)$ and $(-v+\gamma^a)$. This implies that $0 \in \Int B_v$.

Assume that $B_v$ is unbounded. Hence, there exists a sequence $(x_n)_{n \in \N}$ of points of $B_v$ such that $\| x_n \| \underset{n \infty}{\longrightarrow} \infty$. The sequence $(x_n / \| x_n \|)_{n \in \N}$ is valued in the the compact $\partial B(0,1)$. Thus, there is a subsequence $(\nu_n y_n)_{n \in \N}$ of $(x_n / \| x_n \|)_{n \in \N}$ with $\vert \nu_n \vert \underset{n \infty}{\longrightarrow} 0$ and such that for every $n \in \N$, $y_n \in B_v$ and $y_n$ converges to a limit $y$. By \cite[Theorem 8.2]{Rockafellar},
\begin{equation*}
y \in \lbrace z \in \V \, |\; \forall x \in B_v, \forall \lambda \geq 0,\; x+\lambda z \in B_v \rbrace.
\end{equation*}
Hence the half lines $-v+\R_{\geq 0}\, y$ and $v+\R_{\geq 0}\, y$ are contained in $B_v\subset v+ \gamma$. As $B_v$ is symmetric it follows that $v+\R_{\leq 0}\, y \subset B_v$. This implies that $\R \,y \subset \gamma$. This is absurd as $\gamma$ is a proper cone. Hence $B_v$ is bounded.
\end{proof}

It follows from the previous lemma that the gauge 
\begin{equation}\label{eq:normcool}
g_{B_v}(x)=\inf \lbrace \lambda > 0 \,\vert  x \in \lambda B_v \rbrace
\end{equation}
is a norm, the unit ball of which is $B_v$. From now on, we consider $\V$ equipped with this norm. \textbf{In the rest of this section, the ball are taken with respect to this norm}.

\begin{proposition}
Let $v \in \Int(\gamma^a)$, $c \in \R_{\geq 0}$ and $F, G \in \Derb_{\gamma^{\circ\,a}}(\cor_{\V})$. Assume that $\Supp(F)$ and $\Supp(G)$ are $\gamma$-proper subsets of $\V$. Then $F$ and $G$ are $c \cdot v$-interleaved if and only if they are $c$-isomorphic.
\end{proposition}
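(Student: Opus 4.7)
The plan is to reduce the statement to a canonical isomorphism $K_c \star F \simeq \oim{\tau_{cv}} F$, valid for every $F \in \Derb_{\gamma^{\circ,a}}(\cor_\V)$ with $\gamma$-proper support and compatible with the smoothing morphisms $\chi_{c,0}\star F$ and $\chi^\mu_{cv,0}(F)$. Granted such an isomorphism applied to both $F$ and $G$, the data $(f,g)$ of a $c$-isomorphism transports to a $cv$-interleaving and conversely; the two commutative diagrams required in each definition match by naturality of the isomorphism and its compatibility with smoothings.

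By the construction of the norm $g_{B_v}$, the ball $B_c = cB_v = (cv + \gamma) \cap (-cv + \gamma^a)$ is compact and contained in $-cv + \gamma^a$. The compactness of $B_c$ together with the $\gamma$-properness of $\Supp(F)$ ensures that $s \colon B_c \times \Supp(F) \to \V$ is proper, so $K_c \star F = \reim{s}(\cor_{B_c} \boxtimes F) \simeq \roim{s}(\cor_{B_c} \boxtimes F) = \cor_{B_c} \sconv[np] F$. Writing $F \simeq \cor_{\gamma^a} \sconv[np] F$ via Proposition \ref{prop:gammaloc} and invoking associativity of $\sconv[np]$, the task reduces to the key identity
\[
\cor_{B_c} \sconv[np] \cor_{\gamma^a} \simeq \cor_{-cv + \gamma^a},
\]
which combined with Lemma \ref{lem:transker} identifies $K_c \star F$ with $\oim{\tau_{cv}} F$.

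To prove the key identity, the closed inclusion $B_c \subset -cv + \gamma^a$ yields a morphism $\cor_{-cv + \gamma^a} \to \cor_{B_c}$. Convolving with $\cor_{\gamma^a}$ and identifying the left-hand side by Lemma \ref{lem:transker} produces a natural map $\cor_{-cv+\gamma^a} \to \cor_{B_c} \sconv[np] \cor_{\gamma^a}$. The map $s \colon B_c \times \gamma^a \to \V$ is proper (by compactness of $B_c$ and closedness of $\gamma^a$), so proper base change computes the stalk of the right-hand side at $z \in \V$ as $\mathrm{R}\Gamma_c(B_c \cap (z+\gamma); \cor)$. This intersection is empty precisely when $z \notin B_c + \gamma^a = -cv + \gamma^a$, and is otherwise a non-empty compact convex subset of $\V$, hence contractible, giving cohomology $\cor$ concentrated in degree zero. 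Consequently the candidate morphism is a stalkwise isomorphism, hence an isomorphism of sheaves.

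The compatibility with smoothings is then formal: the map $\chi_{c,0}\colon K_c \to K_0$ arises from the closed inclusion $\{0\} \subset B_c$, whereas $\chi^\mu_{cv,0}$ corresponds via Lemma \ref{lem:transker} to the morphism $\cor_{-cv+\gamma^a} \to \cor_{\gamma^a}$ induced by the closed inclusion $\gamma^a \subset -cv + \gamma^a$; applying the key identity both to $B_c$ and to the degenerate ball $\{0\}$ shows that these two morphisms correspond under the isomorphism $K_c \star F \simeq \oim{\tau_{cv}} F$. The principal obstacle is the stalk computation underlying the key identity, which depends crucially on the specific convex-geometric definition $B_v = (v+\gamma) \cap (-v+\gamma^a)$ and on the $\gamma$-properness hypothesis ensuring that $\reim{s}$ and $\roim{s}$ agree on all the relevant supports.
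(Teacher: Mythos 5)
Your proof follows essentially the same route as the paper's: reduce the equivalence of the two notions to the identification $K_c \star F \simeq \oim{\tau_{cv}}F$ via Lemma \ref{lem:transker}, Proposition \ref{prop:gammaloc}, and the isomorphism $K_c \star \cor_{\gamma^a} \simeq \cor_{-cv+\gamma^a}$. The one genuine difference is that you actually \emph{prove} this last isomorphism by a stalk computation (proper base change plus contractibility of $B_c \cap (z+\gamma)$), whereas the paper asserts it without argument (and, incidentally, with what appears to be a sign typo, writing $\cor_{c\cdot v + \gamma^a}$ where $\cor_{-cv+\gamma^a} = \cor_{cB_v + \gamma^a}$ is meant). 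Your filled-in detail is correct and makes the argument self-contained. One small imprecision: the $\gamma$-properness hypothesis is not what makes the stalk computation of the key identity go through — that only needs compactness of $B_c$ and closedness of $\gamma^a$. Rather, $\gamma$-properness of $\Supp F$ and $\Supp G$ is what ensures that $\roim{s}$ and $\reim{s}$ agree (hence that $\sconv[np]$, $\star$, and the associativity manipulations relating $K_c \star F$ to $\cor_{-cv+\gamma^a} \sconv[np] F$ are legitimate) when one of the two operands is $F$ or $G$ rather than a compactly supported kernel. Worth correcting the attribution, but the proof is sound.
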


\begin{proof}
Let $F, G \in \Derb_{\gamma^{\circ\,a}}(\cor_{\V})$. Assume that $\Supp(F)$ and $\Supp(G)$ are $\gamma$-proper subsets of $\V$ and that they are $c \cdot v$-interleaved. We set $w=c \cdot v$. Hence, we have the maps 
\begin{align*}
\alpha \colon \tau_{w \ast} F \to G && \beta \colon \tau_{w \ast} G \to  F
\end{align*}
such that the below diagrams commute
\begin{align*}
\xymatrix{\tau_{2w \ast} F \ar[r]^-{\oim{\tau_w} \alpha} \ar@/_2pc/[rr]_{{\chi^\mu_{0,2w}}(F)} & \oim{\tau_{w}} G \ar[r]^-{\oim{\tau_{w}} \beta} &  F 
&&
\tau_{2w \ast} G \ar[r]^-{\oim{\tau_w} \beta} \ar@/_2pc/[rr]_{{\chi^\mu_{0,2w}}(G)} & \oim{\tau_{w}} F \ar[r]^-{\oim{\tau_{w}} \alpha} &  G. 
\\
}
\end{align*}
Using Lemmas \ref{lem:transker} and \ref{lem:microgam}, we obtain 
\begin{align*}
\xymatrix{\cor_{2w+\gamma^a} \sconv[np] F \ar[rr]^-{\cor_{2w+\gamma^a} \sconv[np] f} \ar@/_2pc/[rrrr]_{{\chi_{2w,0}} \sconv[np] F} && \cor_{w+\gamma^a} \sconv[np] G \ar[rr]^-{\cor_{w+\gamma^a} \sconv[np] g} &&   F.
}
\end{align*}
Hence, using the $\gamma$-properness of the supports of $F$, $G$ and that for every $c \geq 0$, 
\begin{equation*}
 \cor_{ c \cdot v+\gamma^a}\simeq \cor_{ c \cdot B_v+\gamma^a}\simeq K_{c} \star \cor_{\gamma^a}, 
\end{equation*} 
as well as Proposition \ref{prop:gammaloc}, we get
\begin{align*}
\xymatrix{K_{2c} \star F \ar[rr]^-{K_{c} \star f} \ar@/_2pc/[rrrr]_{{\chi_{2c,0}} \star F} && K_c \star G \ar[rr]^-{g} &&   F
}.
\end{align*}
Similarly we obtain the following commutative diagram
\begin{align*}
\xymatrix{K_{2c} \star G \ar[rr]^-{K_{c} \star g} \ar@/_2pc/[rrrr]_{{\chi_{2c,0}} \star G} && K_c \star F \ar[rr]^-{f} &&   G
}.
\end{align*}Hence, $F$ and $G$ are $c$ $\mathfrak{c}$-isomorphic. 

A similar argument proves that if $F$ and $G$ are $c$-isomorphic then they are $c\cdot v$-interleaved.
\end{proof}

\begin{corollary}\label{cor:gamconv}
Let $v \in \Int{\gamma^a}$, $F, G \in \Derb_{\gamma^{\circ\,a}}(\cor_{\V})$. Assume that $\Supp(F)$ and $\Supp(G)$ are $\gamma$-proper subsets of $\V$. Then
\begin{equation*}
d_\mathfrak{c}(F,G)=d^v_{I^\mu}(F,G)
\end{equation*}
where $d_\mathfrak{c}$ is the convolution distance associated with the norm $g_{B_v}$.
\end{corollary}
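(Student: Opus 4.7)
The plan is to deduce this corollary immediately from the proposition that precedes it, by taking infima. First I would unpack the two distances by definition:
\[ d^v_{I^\mu}(F,G) = \inf\bigl(\{c \geq 0 \mid F \text{ and } G \text{ are } c \cdot v\text{-interleaved}\} \cup \{\infty\}\bigr), \]
\[ d_\mathfrak{c}(F,G) = \inf\bigl(\{c \geq 0 \mid F \text{ and } G \text{ are } c\text{-isomorphic}\} \cup \{\infty\}\bigr), \]
where the $c$-isomorphism relation is formed with respect to the convolution kernels $K_c$ associated with the chosen norm $g_{B_v}$, whose unit ball is exactly $B_v$.

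Next I would invoke the preceding proposition: under the hypothesis that $\Supp(F)$ and $\Supp(G)$ are $\gamma$-proper, the conditions ``$c \cdot v$-interleaved'' and ``$c$-isomorphic'' are equivalent for every fixed $c \geq 0$. Consequently, the two sets whose infima define the distances coincide as subsets of $\R_{\geq 0}$, and the equality $d_\mathfrak{c}(F,G) = d^v_{I^\mu}(F,G)$ follows at once.

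There is no genuine obstacle at the level of the corollary itself: all the substance has been absorbed into the preceding proposition. The ingredients that make the equivalence of interleavings and $c$-isomorphisms work, and that I would only need to cite rather than reprove, are: Lemma \ref{lem:transker}, which turns $\oim{\tau_w}$ into the convolution $\cor_{-w+\gamma^a} \sconv[np] (\cdot)$ using the $\gamma$-properness hypothesis; Lemma \ref{lem:microgam} and Proposition \ref{prop:gammaloc}, identifying the microlocal and $\gamma$-topology smoothing morphisms; and, most crucially, the fact that the gauge norm $g_{B_v}$ has been designed so that $K_c \star \cor_{\gamma^a} \simeq \cor_{c \cdot v + \gamma^a}$, matching the translation kernel that appears naturally on the interleaving side. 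Once these tools are packaged in the proposition, the corollary reduces to a purely definitional comparison of infima over coinciding sets.
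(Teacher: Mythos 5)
Your proposal is correct and matches the paper's implicit argument: the corollary is stated without a separate proof precisely because it is a direct consequence of the preceding proposition, obtained by noting that under the $\gamma$-properness hypothesis the set of $c$ for which $F,G$ are $c\cdot v$-interleaved coincides with the set of $c$ for which they are $c$-isomorphic (with respect to the norm $g_{B_v}$), and hence the two infima agree.
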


\end{document}